\newtheorem{theorem}{Theorem}[section]
\newtheorem{lemma}[theorem]{Lemma}
\newtheorem{proposition}[theorem]{Proposition}
\newtheorem{corollary}[theorem]{Corollary}
\newtheorem{remark}[theorem]{Remark}
\newtheorem{note}[theorem]{Note}
\newtheorem{Formula of adjoint functors}[theorem]{Formula of adjoint functors}
\newtheorem{definition}[theorem]{Definition}
\newtheorem{notation}[theorem]{Notation}
\newtheorem{Adjunction formula}[theorem]{\indent\sc Adjunction formula}
\newtheorem{hypothesis}[theorem]{Hypothesis}
\DeclareMathOperator{\limi}{{lim}}
\newcommand{\ilim}[1]{\,\underset{#1}{\underset{\to}{\limi}}\,}
\DeclareMathOperator{\Id}{{Id}}
\DeclareMathOperator{\Hom}{{Hom}}
\DeclareMathOperator{\Ker}{{Ker}}
\DeclareMathOperator{\Ima}{{Im}}
\DeclareMathOperator{\ZZ}{{\mathbb Z}}
\DeclareMathOperator{\R}{{\mathcal R}}
\newcommand{\dosflechasa}[3][]{\xymatrix@1{\ar@<1ex>[r]^-{#2}
\ar@<-1ex>[r]_-{#3} & }}
\newcommand{\dosflechas}{{\xymatrix@1  {\ar@<1ex>[r]
\ar@<-1ex>[r] & }}}
\newcommand{\dosflechasab}[3][]{\xymatrix@1  @C10pt {\ar@<1ex>[r]^-{#2}
\ar@<-1ex>[r]_-{#3} & }}
\newcommand{\dosflechasb}{{\xymatrix@1 @C10pt {\ar@<1ex>[r]
\ar@<-1ex>[r] & }}}
\begin{document}

\title{Functors of modules associated with flat and projective modules II}

\author{Adri\'an Gordillo-Merino, Jos\'e Navarro, Pedro Sancho}
\address{Departamento de Matem\'aticas\\
Universidad de Extremadura\\
Avenida de Elvas, s/n\\
06006 Badajoz (SPAIN)} 
\email{adgormer@unex.es, navarrogarmendia@unex.es, sancho@unex.es}

\thanks{All authors have been partially supported by Junta de Extremadura and FEDER funds.}

\subjclass{Primary 16D10; Secondary 18A99}
\keywords{flat, projective, Mittag-Leffler, reflexivity theorem, functors}

\begin{abstract} Let $R$ be an associative ring with unit. Given an $R$-module $M$, we can associate the following covariant functor from the category of $R$-algebras to the category of abelian groups: $S\mapsto M\otimes_R S$. 
With the corresponding notion of dual functor, we prove that the natural morphism of functors $\,\mathcal M\to \mathcal M^{\vee\vee}\,$ is an isomorphism. 
We prove several characterizations of the functors associated with  flat modules, flat Mittag-Leffler modules and projective modules.
\end{abstract}

\maketitle

\section{Introduction}

Let $\,R\,$ be an associative ring with  unit. Consider the functor from the category of $R$-algebras $\,R\text{-Alg}\,$ to  the category of right $R$-modules $\,R\text{-Mod}\,$,
$$o\colon \,R\text{-Alg}\,\to \,R\text{-Mod}\,,\, o(S):=S$$  
for any $R$-algebra $S$, and the functor $r\colon \,R\text{-Mod}\,\to \,R\text{-Alg}\,$, $r(N)=R\langle N\rangle$, where $R\langle N\rangle$ is the $R$-algebra generated by $N$ (see \ref{N3.4}).
It is well known the functorial isomorphism
$$\Hom_R(N,o(S))=\Hom_{R-alg}(r(N),S)$$
for any right $R$-module $N$ and any $R$-algebra $S$.
Hence, it is easy to obtain a functorial isomorphism
$$\Hom_{grp}(\mathbb G\circ o,\mathbb F)=\Hom_{grp}(\mathbb G,\mathbb F\circ r)$$
for any covariant functors of abelian groups $\mathbb G\colon \,R\text{-Mod}\,\to $ $\mathbb Z${-Mod},  and $\mathbb F\colon \,R\text{-Alg }\,\to $ $\mathbb Z${-Mod}.

Let $\,\mathcal R\,$ be the covariant functor from the category of $\,R$-algebras,  to the category of $R$-algebras, defined by $\,{\mathcal R}(S):=S$, for any $R$-algebra $\,S$. 

\medskip
\begin{definition}
A {\sl functor of $\,\mathcal{R}$-modules} is a covariant functor  $\,\mathbb M \colon R\text{-Alg} \to \mathbb Z\text{-Mod} \,$ together with a morphism of functors of sets $\,{\mathcal R}\times \mathbb M\to \mathbb M\,$ that endows
$\,\mathbb M(S)\,$ with an $\,S$-module structure, for any $\,R$-algebra $\,S$. 

A {\sl morphism of $\,{\mathcal R}$-modules} $\,f\colon \mathbb M\to \mathbb M'\,$
is a morphism of functors such that the morphisms $\,f_{S}\colon \mathbb M({S})\to
\mathbb M'({S})\,$ are morphisms of $\,{S}$-modules.
\end{definition}
\medskip

If $\mathbb G\colon \,R\text{-Mod}\,\to $ $\mathbb Z${-Mod} is  additive, then $\mathbb G^o:=\mathbb G\circ o$ is naturally 
a functor of $\R$-modules. Let $in,h_x\colon R\langle N\rangle \to  R\langle N\oplus x\cdot R\rangle $ be the morphisms of $R$-algebras induced by the morphisms of $R$-modules $N\to  R\langle N\oplus x\cdot R\rangle $, $n\mapsto n,x\cdot n$. Given a functor of $\R$-modules $\mathbb F$, let $\mathbb F^r\colon \,R\text{-Mod}\,\to $ $\mathbb Z${-Mod} be  defined as follows: $\mathbb F^r(N)$ is the kernel of the morphism 
$$\xymatrix{\mathbb F(R\langle N\rangle) \ar[rr]^-{\mathbb F(h_x)-{x\cdot \mathbb F(in)}}  & & \mathbb  F(R\langle N\oplus x\cdot R\rangle)\\ n' \ar@{|->}[rr] & & \mathbb F(h_x)(n')- x\cdot \mathbb F(in)(n'),}$$
for any right $R$-module $N$ and any $n'\in  \mathbb F(R\langle N\rangle)$.
We prove (\ref{T3}) the following theorem.

\begin{theorem} \label{T4} Let $\mathbb F\colon \,R\text{-Alg}\,\to $ $\mathbb Z${-Mod}  be a covariant functor of $\R$-modules and $\mathbb G\colon \,R\text{-Mod}\,\to $ $\mathbb Z${-Mod}  an additive covariant functor of abelian groups.  Then, we have a functorial isomorphism
$$\xymatrix  { \Hom_{grp}(\mathbb G,\mathbb F^r) \ar@{=}[r] & 
\Hom_{\R}(\mathbb G^o,\mathbb F)}$$
\end{theorem}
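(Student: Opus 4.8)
The plan is to upgrade the elementary adjunction recalled in the introduction, $\Hom_{grp}(\mathbb G^o,\mathbb F)=\Hom_{grp}(\mathbb G,\mathbb F\circ r)$, by cutting it down on both sides. Recall that this bijection comes from the adjunction $r\dashv o$: writing $\eta_N\colon N\to o(R\langle N\rangle)$ for the unit (inclusion of generators) and $\epsilon_S\colon R\langle S\rangle\to S$ for the counit (evaluation of generators), it sends $\alpha\in\Hom_{grp}(\mathbb G^o,\mathbb F)$ to $\Phi(\alpha)$ with $\Phi(\alpha)_N=\alpha_{R\langle N\rangle}\circ\mathbb G(\eta_N)$, and has inverse sending $\beta$ to $\Psi(\beta)$ with $\Psi(\beta)_S=\mathbb F(\epsilon_S)\circ\beta_{o(S)}$. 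Since $\mathbb F^r$ is visibly a subfunctor of $\mathbb F\circ r$ (a morphism $N\to N'$ induces maps compatible with $in$ and $h_x$, hence on their kernels), the theorem will follow once I show that $\Phi$ carries $\Hom_{\R}(\mathbb G^o,\mathbb F)$ onto $\Hom_{grp}(\mathbb G,\mathbb F^r)$; as $\Phi,\Psi$ are already mutually inverse, it suffices to check the two inclusions.

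First I must fix the $S$-module structure on $\mathbb G^o(S)=\mathbb G(o(S))$: for $s\in S$, left multiplication $s\cdot\colon o(S)\to o(S)$ is a morphism of right $R$-modules, and the action is $s\cdot m:=\mathbb G(s\cdot)(m)$; additivity of $\mathbb G$ makes this a genuine module structure. The recurring device is that, for the free algebra $S'=R\langle N\oplus x\cdot R\rangle$, the two maps $in,h_x$ become, after applying $o$ and composing with $\eta_N$, the module maps $\iota_0,\iota_x\colon N\to o(S')$ given by $n\mapsto n$ and $n\mapsto x\cdot n$; and crucially $\iota_x=(x\cdot)\circ\iota_0$, where $x\cdot$ denotes left multiplication by $x$ in $S'$.

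For the inclusion $\Phi(\Hom_{\R}(\mathbb G^o,\mathbb F))\subseteq\Hom_{grp}(\mathbb G,\mathbb F^r)$: given an $\R$-linear $\alpha$ and $g\in\mathbb G(N)$, naturality of $\alpha$ along $in$ and $h_x$ yields $\mathbb F(in)(\Phi(\alpha)_N(g))=\alpha_{S'}(\mathbb G(\iota_0)(g))$ and $\mathbb F(h_x)(\Phi(\alpha)_N(g))=\alpha_{S'}(\mathbb G(\iota_x)(g))$. Using $\iota_x=(x\cdot)\circ\iota_0$ and the $S'$-linearity of $\alpha_{S'}$, the second equals $x\cdot\alpha_{S'}(\mathbb G(\iota_0)(g))=x\cdot\mathbb F(in)(\Phi(\alpha)_N(g))$, which is exactly the defining kernel condition for $\mathbb F^r(N)$. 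Hence $\Phi(\alpha)_N$ factors through $\mathbb F^r(N)$.

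The hard part is the converse: that $\beta$ factoring through $\mathbb F^r$ forces $\Psi(\beta)$ to be $\R$-linear. Here I would introduce the specialization $\phi_s\colon R\langle o(S)\oplus x\cdot R\rangle\to S$ sending $x\mapsto s$ and fixing the generators $o(S)$, so that $\phi_s\circ in=\epsilon_S$ and $\phi_s\circ h_x=\epsilon_S\circ r(s\cdot)$. Given $m\in\mathbb G(o(S))$, naturality of $\beta$ along $s\cdot$ gives $\Psi(\beta)_S(s\cdot m)=\mathbb F(\phi_s\circ h_x)(\beta_{o(S)}(m))$; the kernel condition for $\beta_{o(S)}(m)\in\mathbb F^r(o(S))$ turns $\mathbb F(h_x)(\beta_{o(S)}(m))$ into $x\cdot\mathbb F(in)(\beta_{o(S)}(m))$, and then the semilinearity of $\mathbb F$ along $\phi_s$ (that is, naturality of the action $\mathcal R\times\mathbb F\to\mathbb F$, giving $\mathbb F(\phi_s)(x\cdot u)=s\cdot\mathbb F(\phi_s)(u)$) pulls the scalar out as $s\cdot\mathbb F(\epsilon_S)(\beta_{o(S)}(m))=s\cdot\Psi(\beta)_S(m)$. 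The main obstacle, and the step to treat most carefully, is precisely this combination of the specialization $x\mapsto s$ with semilinearity of $\mathbb F$: it is what converts the single generic relation defining $\mathbb F^r$ into $S$-linearity of $\Psi(\beta)_S$ at every scalar $s$ and every algebra $S$ at once.
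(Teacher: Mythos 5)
Your proposal is correct, and in fact the transposition maps you use are literally the paper's: your $\Phi(\alpha)_N=\alpha_{R\langle N\rangle}\circ\mathbb G(\eta_N)$ is the paper's assignment $\varphi\mapsto\varphi^r\circ i_{\mathbb G}$ (with $\eta_N=i_N$), and your $\Psi(\beta)_S=\mathbb F(\epsilon_S)\circ\beta_{o(S)}$ is the paper's $\phi\mapsto\pi_{\mathbb F}\circ\phi^o$ (with $\epsilon_S=\pi_S$). The routes differ in their scaffolding, though. The paper builds the restricted adjunction from scratch: it defines the unit $i_{\mathbb G}$ and counit $\pi_{\mathbb F}$, proves the naturality squares (Propositions \ref{PA} and \ref{PB}) and the two triangle identities (Lemmas \ref{T1} and \ref{T2}), and assembles them in Theorem \ref{T3}. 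You instead quote the unrestricted precomposition adjunction $\Hom_{grp}(\mathbb G^o,\mathbb F)\cong\Hom_{grp}(\mathbb G,\mathbb F\circ r)$ coming from $r\dashv o$ (which the paper itself records as well known in the introduction), and reduce the theorem to matching the two subsets $\Hom_{\R}(\mathbb G^o,\mathbb F)$ and $\Hom_{grp}(\mathbb G,\mathbb F^r)$ under that bijection; your observation that mutual inverseness makes the two inclusions sufficient is sound, as is your remark that $\mathbb F^r$ is a subfunctor of $\mathbb F\circ r$. Moreover, your two substantive verifications are the paper's two substantive verifications in disguise: inclusion (a) (naturality along $in,h_x$ plus $S'$-linearity of $\alpha_{S'}$ at $x$) is the paper's check that $i_{\mathbb G}$ lands in $\mathbb G^{or}$ together with the well-definedness of $\varphi^r$ on kernels, and inclusion (b) --- the specialization $x\mapsto s$ combined with semilinearity of $\mathbb F$ --- is exactly the paper's proposition that $\pi_{\mathbb F,S}(s*m)=s\cdot\pi_{\mathbb F,S}(m)$, your $\phi_s$ being the composite $\pi_S\circ(x{=}s)$ of the paper's two morphisms. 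What your organization buys is brevity and a clean isolation of the genuinely new content, namely that $S$-linearity on the algebra side corresponds to the single generic kernel relation defining $\mathbb F^r$ on the module side. What the paper's organization buys is self-containedness and reusable infrastructure: $\pi_{\mathbb F}$, $i_{\mathbb G}$, the operation $f\mapsto f^r$ and Lemmas \ref{T1}, \ref{T2} are invoked repeatedly later (e.g.\ in Proposition \ref{1.30}, Corollary \ref{L5.111} and Theorem \ref{prop4}), so proving them explicitly is not wasted effort in the context of the paper.
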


In Algebraic Geometry, functors of $\R-$modules from the category of $R-$algebras to the category of abelian groups are featured more frequently than those from the category of right $R-$modules to the category of abelian groups. Nevertheless,  the results 
about reflexivity of modules and characterizations of flat Mittag-Leffler modules are obtained more naturally with the latter functors (see
 \cite{mittagleffler}). The results below are a consequence of Theorem \ref{T4} and the results obtained in  \cite{mittagleffler}.

Any $R$-module $M$ can be thought as a functor of $\R$-modules:
Consider the following covariant functor of $\,\mathcal R$-modules $\mathcal M$, defined by 
$$\mathcal M(S):=S\otimes_R M\,,$$
for any $R$-algebra $S$. We will say that $\mathcal M$ is the quasi-coherent $\R$-module associated with  $M$. It is easy to prove that the category of $R$-modules is equivalent to the category of quasi-coherent $\R$-modules. 

Given a functor of $\R$-modules 
$\mathbb M$,  we will say that the functor of right $\R$-modules $\mathbb M^\vee$ defined by
$$\mathbb M^\vee(S)=\Hom_{\R}(\mathbb M,\mathcal S),$$
for any $R$-algebra $S$, is the dual functor of $\mathbb M$.
We will say that $\mathcal M^\vee$ is the $\R$-module scheme associated with the $R$-module $M$.
We are now in a position to state the main results of this paper, grouped together herein according to the concepts involved:

\begin{theorem} Let $M$ be an $R$-module. The natural morphism of $\mathcal R$-modules $$\mathcal M \to \mathcal M^{\vee\vee}$$
is an isomorphism.
\end{theorem}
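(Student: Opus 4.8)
The plan is to reduce the statement to the reflexivity theorem for functors of modules over $R\text{-Mod}$ proved in \cite{mittagleffler}, transporting it along the adjunction of Theorem \ref{T4}. The underlying principle is that the assignment $\mathbb{G}\mapsto \mathbb{G}^o$ carries the duality of functors of modules to the duality $(-)^\vee$ of functors of $\R$-modules. To begin, let $\mathbb{G}\colon R\text{-Mod}\to \mathbb{Z}\text{-Mod}$ be the additive functor $\mathbb{G}(N):=N\otimes_R M$. Then $\mathbb{G}^o(S)=\mathbb{G}(o(S))=S\otimes_R M=\mathcal{M}(S)$, and a routine check of the $S$-module structures shows $\mathcal{M}=\mathbb{G}^o$ as functors of $\R$-modules.

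The heart of the argument is the natural identification, for every additive functor $\mathbb{G}$,
\[
(\mathbb{G}^o)^\vee\;\cong\;(\mathbb{G}^*)^o,
\]
where $\mathbb{G}^*(N):=\Hom_{grp}(\mathbb{G},\,\cdot\otimes_R N)$ denotes the dual functor of modules from \cite{mittagleffler}. I would prove it by evaluating at an $R$-algebra $S$: by definition $(\mathbb{G}^o)^\vee(S)=\Hom_{\R}(\mathbb{G}^o,\mathcal{S})$, which by Theorem \ref{T4} equals $\Hom_{grp}(\mathbb{G},\mathcal{S}^r)$. The remaining computational step is the identification $\mathcal{S}^r\cong(\,\cdot\otimes_R S)$, a direct calculation of the kernel defining $(-)^r$ applied to the quasi-coherent module $\mathcal{S}$. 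Combining these gives $(\mathbb{G}^o)^\vee(S)=\Hom_{grp}(\mathbb{G},\,\cdot\otimes_R S)=\mathbb{G}^*(S)=(\mathbb{G}^*)^o(S)$, naturally in $S$.

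Applying this identification twice yields
\[
\mathcal{M}^{\vee\vee}=\bigl((\mathbb{G}^o)^\vee\bigr)^\vee\cong\bigl((\mathbb{G}^*)^o\bigr)^\vee\cong(\mathbb{G}^{**})^o,
\]
and under these identifications the natural morphism $\mathcal{M}\to \mathcal{M}^{\vee\vee}$ is carried to $(\mathbb{G}\to \mathbb{G}^{**})^o$, the image under $(-)^o$ of the canonical biduality morphism of functors of modules. Since the latter is an isomorphism by the reflexivity theorem of \cite{mittagleffler}, and $(-)^o$ preserves isomorphisms, the result follows.

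The routine parts are the tensor--Hom identifications; the delicate points, and the main obstacle, lie in the bookkeeping: verifying that the isomorphism of Theorem \ref{T4}, together with $\mathcal{S}^r\cong(\,\cdot\otimes_R S)$, is compatible with the $S$-module structures on both sides, and---most importantly---that the purely formal adjunction isomorphism of Theorem \ref{T4} sends the canonical evaluation morphism $\mathcal{M}\to \mathcal{M}^{\vee\vee}$ precisely to the biduality morphism of \cite{mittagleffler}. Tracing the unit and counit of the adjunction to confirm this last compatibility is where the real care is required.
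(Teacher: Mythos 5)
Your proposal is correct, but it takes a genuinely different route from the paper. The paper's proof of Theorem \ref{reflex2} is self-contained and never invokes the reflexivity theorem of \cite{mittagleffler}: it is the pointwise computation
$$\mathcal M^{\vee\vee}(S)=\Hom_{\mathcal R}(\mathcal M^\vee,\mathcal S)\overset{\text{\ref{prop4}}}=\mathcal S^r(M)\overset{\text{\ref{repera2b}}}=\mathcal M^r(S)\overset{\text{\ref{super}}}=S\otimes_R M=\mathcal M(S),$$
where Theorem \ref{prop4} is itself the combination of the adjunction (Corollary \ref{L5.111}, applicable because $\mathcal M^{\vee ro}=\mathcal M^{\vee}$ by Proposition \ref{1.30}), the identification of $\mathcal M^{\vee r}$ with the representable functor $\Hom_R(M,-)$ (Proposition \ref{1.30} again), and Yoneda's lemma (Lemma \ref{Yoneda}). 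You instead prove that $(-)^o$ intertwines the two duality operations, $(\mathbb G^o)^\vee\cong(\mathbb G^*)^o$, apply this twice, and import the reflexivity of $N\mapsto N\otimes_R M$ from \cite{mittagleffler} as a black box. Your intertwining step is sound --- it is exactly Theorem \ref{T4} with $\mathbb F=\mathcal S$ combined with Proposition \ref{super}, and at $\mathbb G=\mathcal M_r$ it essentially reproves Proposition \ref{1.30} --- but two points are left implicit: the second application is to $\mathbb G^*$, a functor on \emph{left} $R$-modules, so it requires the mirrored right-module version of Theorem \ref{T4} (licensed by the convention stated at the end of Section \ref{preliminar}) together with the easy check that $\mathbb G^*$ is additive; and the compatibility of the two evaluation morphisms, which you rightly single out as the delicate point, does hold but is precisely the double-dual bookkeeping that the paper's shorter route avoids (to be fair, the paper's own proof leaves implicit the analogous, simpler check that its chain of identifications composes to the canonical morphism). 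What each approach buys: the paper's argument is shorter, transports only one dualization, and keeps reflexivity independent of the results of \cite{mittagleffler}; yours yields a reusable general statement ($(\mathbb G^o)^\vee\cong(\mathbb G^*)^o$ for any additive $\mathbb G$) and is the same transport-along-adjunction strategy that the paper actually employs in Section 6 for the flat, Mittag-Leffler and projective characterizations, so it matches the philosophy announced in the introduction even though the paper does not use it for this particular theorem.
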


When $\,R\,$ is a commutative ring,  this theorem has been proved for finitely generated modules 
using the language of sheaves in the big Zariski topology, in  \cite{Hirschowitz}, and it is
implicit in \cite[II,\textsection 1,2.5]{gabriel}. The reflexivity of these quasi-coherent $\,\mathcal{R}$-modules $\,\mathcal{M}\,$ has been used for a variety of applications in theory of linear representations of affine group schemes \cite{Amel,Pedro1,Pedro2}.
Likewise, we think that this new reflexivity theorem will be useful in the theory of comodules over non-commutative rings.

\bigskip

\begin{theorem} Let $M$ be an $R$-module such that $\mathcal M^r(N)=N\otimes_R M$, for any right $R$-module $N$. Then,
\begin{enumerate}
\item  $M$ is a finitely generated projective module if and only if $\mathcal M$ is a module scheme.

\item $M$ is a flat module if and only if $\mathcal M$ is a direct limit of module schemes.

\item $M$ is a flat Mittag-Leffler module if and only if  $\mathcal M$ is a direct limit of submodule schemes.

\item $M$ is a flat strict Mittag-Leffler module if and only if  $\mathcal M$ is a direct limit of submodule schemes, $\mathcal N_i^\vee\subseteq \mathcal M$, and the dual morphism $\mathcal M^\vee\to\mathcal N_i$ is an epimorphism, for any $i$.

\item $M$ is a countably generated projective module  if and only if there exists a chain of 
module subschemes of $\mathcal M$, 
$$ \mathcal N_0^\vee\subseteq \mathcal N_1^\vee\subseteq\cdots\subseteq \mathcal N^\vee_n\subseteq\cdots,$$
such that $\mathcal M=\cup_{n\in\mathbb N}\mathcal N_n^\vee$. 

\item $M$ is projective if and only if there exists a chain of $\R$-sub\-mo\-du\-les of $\mathcal M$,
 $$\mathbb W_0\subseteq \mathbb W_1\subseteq \cdots\subseteq \mathbb W_n\subseteq \cdots ,$$
such that $\mathcal M=\cup_{n\in\mathbb N} \mathbb W_n$, where $\mathbb W_n$ is a direct sum of module schemes  and the natural morphism $\mathcal M^\vee\to \mathbb W_{n}^\vee$ is an epimorphism, for any $n\in \mathbb N$.

\end{enumerate}

\end{theorem}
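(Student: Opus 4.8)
The plan is to deduce all six equivalences from the corresponding characterizations of $M$ already established in \cite{mittagleffler} for the functor of modules $N\mapsto N\otimes_R M$, using Theorem \ref{T4} as a dictionary between functors of $\R$-modules (defined on $R$-algebras) and functors of modules (defined on right $R$-modules). The guiding principle is that, under the hypothesis $\mathcal M^r(N)=N\otimes_R M$, the functor $\mathcal M$ is completely recovered from its $r$-restriction, so every functor-theoretic condition appearing in the statement can be transported to the module side where the theory of \cite{mittagleffler} applies.

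First I would record the key identification. Writing $\mathbb G_M\colon N\mapsto N\otimes_R M$ for the additive functor of modules, the hypothesis says $\mathcal M^r=\mathbb G_M$, while $\mathbb G_M^o(S)=S\otimes_R M=\mathcal M(S)$, so $\mathcal M=\mathbb G_M^o$. Hence Theorem \ref{T4} gives, for every functor of $\R$-modules $\mathbb F$, a natural isomorphism $\Hom_\R(\mathcal M,\mathbb F)=\Hom_{grp}(\mathbb G_M,\mathbb F^r)$. Taking $\mathbb F=\mathcal S$ and using that $\mathcal S^r(N)=N\otimes_R S$, the dual $\mathcal M^\vee(S)=\Hom_\R(\mathcal M,\mathcal S)$ is thereby identified with the dual functor of $\mathbb G_M$ in the sense of \cite{mittagleffler}. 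This is the central step: it shows that duals, and therefore module schemes $\mathcal N^\vee$, correspond under $(-)^r$ to the duals studied there, and the reflexivity theorem $\mathcal M\to\mathcal M^{\vee\vee}$ guarantees this correspondence is faithful in both directions.

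Next I would verify that the remaining constructions in the statement are preserved by the correspondence: a direct limit of functors of $\R$-modules restricts under $(-)^r$ to the direct limit of the associated module functors; an inclusion of $\R$-submodules $\mathcal N^\vee\subseteq\mathcal M$ restricts to an inclusion of the corresponding dual module functors; and a morphism $\mathcal M^\vee\to\mathcal N$ is an epimorphism of functors of $\R$-modules precisely when its $r$-restriction is an epimorphism of module functors. With this dictionary in place, each clause is a translation of a result of \cite{mittagleffler}: ``$\mathcal M$ is a module scheme'' means $\mathbb G_M$ is the dual of a quasi-coherent module functor, equivalent to $M$ finitely generated projective, giving (1); ``direct limit of module schemes'' gives the flatness criterion (2); ``direct limit of submodule schemes'' gives the flat Mittag-Leffler criterion (3); the added surjectivity of $\mathcal M^\vee\to\mathcal N_i$ gives strict Mittag-Leffler (4); the countable chain of subschemes exhausting $\mathcal M$ gives countably generated projectivity (5); and the chain of $\mathbb W_n$, each a direct sum of module schemes with surjective dual maps, gives projectivity (6).

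I expect the main obstacle to be the precise handling of the epimorphism and exactness conditions, since these are exactly what separate the strict Mittag-Leffler case (4) and the projective cases (5) and (6) from the weaker ones. Concretely, one must confirm that surjectivity of $\mathcal M^\vee\to\mathcal N_i$ as a morphism of functors of $\R$-modules is equivalent, after applying $(-)^r$, to the surjectivity condition on the module side used in \cite{mittagleffler}; this requires knowing that $(-)^r$, being defined as a kernel, is left exact and reflects epimorphisms onto the relevant free test modules. Once this compatibility is settled, the six equivalences follow formally from the corresponding results of \cite{mittagleffler}.
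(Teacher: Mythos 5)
Your overall strategy coincides with the paper's own proof (its final section): under the hypothesis $\mathcal M^r(N)=N\otimes_R M$, the operations $(-)^o$ and $(-)^r$ are used as a dictionary between $\mathcal M$ and the module-side functor $N\mapsto N\otimes_R M$; the adjoint functor theorem (Theorem \ref{T4}, through Corollary \ref{L5.111}) identifies the relevant Hom groups, including the duals (via $\mathcal S^r(N)=N\otimes_R S$, which holds because an algebra $S$ is a bimodule); and each item is then a translation of the corresponding result of \cite{mittagleffler}. Your treatment of duals, of direct limits, and of monomorphisms (hence submodule schemes) matches what the paper records in Lemma \ref{L6.5A} and Lemma \ref{L6.5}, and items (1), (2), (3) and (5) do follow exactly as you outline.

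The gap is precisely at the point you yourself flag as the main obstacle: the epimorphism compatibility needed for items (4) and (6). You propose to settle it by noting that $(-)^r$, being defined as a kernel, is left exact and ``reflects epimorphisms onto the relevant free test modules.'' Left exactness is the wrong tool: kernels preserve monomorphisms --- that is exactly what yields Lemma \ref{L6.5} --- but no formal exactness property of $(-)^r$ will convert surjectivity of $\mathcal M^\vee(S)\to \prod_i \mathcal N_i(S)$ for every $R$-algebra $S$ into surjectivity of $\mathcal M^{\vee r}(N)=\Hom_R(M,N)\to \prod_i (N_i\otimes_R N)$ for every right $R$-module $N$. The paper needs a genuine argument here (Lemma \ref{L6.8}): first one reduces to $N$ free, using that the target functor $\prod_i \mathcal N_{ir}$ preserves epimorphisms (right exactness of the tensor product, and a product of surjections is surjective); then one uses that a \emph{free} right $R$-module $N$ is a retract, as a right $R$-module, of the $R$-algebra $R\langle N\rangle$, via the $R$-module retraction $R\langle N\rangle\to N\cdot R\to N$ of the inclusion $i_N\colon N\to R\langle N\rangle$ (this uses the bimodule structure on free modules). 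Surjectivity at the algebra $R\langle N\rangle$ then descends to surjectivity at $N$. Note also that the epimorphism condition in item (6) concerns $\mathcal M^\vee\to \mathbb W_n^\vee$ with $\mathbb W_n$ a direct sum of module schemes, so $\mathbb W_n^\vee$ is a product $\prod_j\mathcal N_j$ and the same lemma is what is needed. Without this retraction trick (or an equivalent argument), your dictionary does not yield items (4) and (6).
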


Also, the theorem below establishes several statements which are equivalent to an $R$-module being a flat strict Mittag-Leffler module.

\begin{theorem} Let $M$ be an $R$-module such that $\mathcal M^r(N)=N\otimes_R M$, for any right $R$-module $N$.
Then, the following, equivalent  conditions are met:

\begin{enumerate}

\item $M$ is  a flat strict Mittag-Leffler $R$-module.

\item Let $\{M_i\}_{i\in I}$ be the set of all finitely generated submodules of $M$, and $M'_i:=\Ima[M^*\to M_i^*]$. The natural morphism
$\mathcal M\to \lim \limits_{\rightarrow} {\mathcal M_i'}^\vee$
is an isomorphism.

\item There exists a monomorphism $\mathcal M\hookrightarrow \prod_{J}\mathcal R$.

\item Every morphism of $\mathcal R$-modules $f\colon \mathcal M^\vee\to \mathcal R$ factors through the quasi-coherent module associated with $\Ima f_R$.
\end{enumerate}

\end{theorem}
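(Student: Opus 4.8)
The plan is to establish the cyclic chain $(1)\Rightarrow(2)\Rightarrow(3)\Rightarrow(4)\Rightarrow(1)$, relying throughout on two preliminary identifications. First, for every $R$-module $N$ the associated module scheme is $\mathcal N^\vee(S)=\Hom_R(N,S)$ (by the equivalence between $R$-modules and quasi-coherent $\R$-modules); in particular $\prod_J\mathcal R$ is the module scheme of the free module $R^{(J)}$, and the reflexivity theorem gives $\Hom_\R(\mathcal M^\vee,\mathcal R)=\mathcal M^{\vee\vee}(R)=M$, so that every morphism $f\colon\mathcal M^\vee\to\mathcal R$ is the evaluation $\mathrm{ev}_m$ at a unique $m\in M$, with $f_S(\psi)=\psi(m)$ for $\psi\in\Hom_R(M,S)$ and $\Ima f_R=\{\phi(m):\phi\in M^*\}=:I_m$. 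Second, the standing hypothesis $\mathcal M^r(N)=N\otimes_R M$, together with Theorem \ref{T4} and the fact that $(-)^r$ is a kernel functor (hence left exact and product preserving, with $\mathcal R^r(N)=N$), is the device that turns assertions about $\mathcal M$ as a functor on $R$-algebras into assertions about the groups $N\otimes_R M$ for an \emph{arbitrary} right $R$-module $N$. This last point is what allows the scheme-theoretic conditions $(2)$--$(4)$ to make contact with the module-theoretic notion in $(1)$.

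For $(1)\Rightarrow(2)$ I would specialise the preceding theorem: a flat strict Mittag--Leffler module makes $\mathcal M$ the filtered union of submodule schemes $\mathcal N_i^\vee\subseteq\mathcal M$ for which $\mathcal M^\vee\to\mathcal N_i$ is an epimorphism. Dualising $\mathcal N_i^\vee\subseteq\mathcal M$ and evaluating at $R$ exhibits $N_i$ as a quotient of $M^*$; matching this with $M'_i=\Ima[M^*\to M_i^*]$ (the identification $N_i\cong M'_i$ being exactly the content of the corresponding step in \cite{mittagleffler}) shows that the canonical system $\{{\mathcal M'_i}^\vee\}$ is cofinal, whence the natural morphism $\mathcal M\to\varinjlim{\mathcal M'_i}^\vee$ induced by the pairings $\mathcal M_i\to{\mathcal M'_i}^\vee$, $\,s\otimes m\mapsto(\phi\mapsto s\,\phi(m))$, is an isomorphism. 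For $(2)\Rightarrow(3)$ I would fix a free presentation $R^{(J)}\twoheadrightarrow M^*$ and compose it with the defining surjections $M^*\twoheadrightarrow M'_i$; applying $\Hom_R(-,S)$ to $R^{(J)}\twoheadrightarrow M'_i$ yields monomorphisms ${\mathcal M'_i}^\vee\hookrightarrow\prod_J\mathcal R$ forming a cocone over the system. Since a filtered colimit of a compatible family of monomorphisms into a fixed functor is again a monomorphism, $(2)$ gives the embedding $\mathcal M\cong\varinjlim{\mathcal M'_i}^\vee\hookrightarrow\prod_J\mathcal R$.

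For $(3)\Rightarrow(4)$ I would first enlarge the index set to $J=M^*$ with the tautological family of functionals, which preserves the monomorphism, so that $\mathcal M\hookrightarrow\prod_{M^*}\mathcal R$ is the canonical evaluation. Applying $(-)^r$ and the standing hypothesis converts this into the injectivity of $N\otimes_R M\to\prod_{M^*}N$ for every $N$. Now fix $m\in M$ and a map $\psi\colon M\to S$; taking $N:=S/SI_m$, every coordinate $\phi(m)\in I_m$ of the image of $1\otimes m$ vanishes, so $1\otimes m=0$ in $N\otimes_R M$, and pushing this along $\psi$ yields $\psi(m)\in SI_m=\Ima[S\otimes_R I_m\to S]$. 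This is precisely the factorisation of $\mathrm{ev}_m$ through $\mathcal I_m$ required by $(4)$. Reading the same chain of equalities backwards gives $(4)\Rightarrow(1)$: the factorisation says that $\psi(m)\in\Ima[N\otimes_R I_m\to N]$ for all $N$ and all $\psi\colon M\to N$, which is one of the standard criteria of \cite{mittagleffler} for $M$ to be flat strict Mittag--Leffler.

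The main obstacle is the recurring mismatch between functors on $R$-algebras and the module-theoretic property being characterised: strict Mittag--Leffler must be tested against all right $R$-modules, yet $\mathcal M$ only sees algebras, and $\otimes_R$ does not commute with the infinite products in $\prod_J\mathcal R$ nor with the inverse limit $M^*=\varprojlim M_i^*$. Reconciling these is exactly where the hypothesis $\mathcal M^r(N)=N\otimes_R M$ and Theorem \ref{T4} are indispensable, and I expect the delicate bookkeeping to lie in the passage from monomorphisms of functors to the module-level injectivities in $(3)\Rightarrow(4)$, and in verifying that conditions $(2)$--$(4)$ force \emph{flatness} and not merely the Mittag--Leffler factorisation---this last point I would settle with the flatness criterion of the preceding theorem and \cite{mittagleffler}.
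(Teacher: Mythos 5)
Your route is genuinely different from the paper's: the paper (Theorem \ref{blabla}, whose conditions (1),(4),(6),(3) are the present (1)--(4)) proves each equivalence $1\Leftrightarrow k$ by transferring it wholesale to \cite{mittagleffler} (4.7, 4.9, 4.10) through the adjunction machinery (Corollary \ref{L5.111}, Lemmas \ref{L6.5A}, \ref{L6.5}, \ref{L6.8}), whereas you attempt direct arguments for $(2)\Rightarrow(3)$ and $(3)\Rightarrow(4)$. Your $(2)\Rightarrow(3)$ is correct and is not spelled out anywhere in the paper: dualizing $R^{(J)}\twoheadrightarrow M^*\twoheadrightarrow M_i'$ gives a compatible family of monomorphisms ${\mathcal M_i'}^\vee\hookrightarrow\prod_J\mathcal R$, and a filtered colimit of compatible monomorphisms into a fixed functor is a monomorphism (pointwise, filtered colimits are exact in abelian groups). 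However, $(3)\Rightarrow(4)\Rightarrow(1)$ contains genuine gaps.

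First, a handedness error that matters because $R$ is not assumed commutative. Since $\mathcal M^\vee$ is a functor of \emph{right} $\R$-modules, $f_R\colon M^*\to R$ is right $R$-linear, so $I_m=\Ima f_R=\{\phi(m):\phi\in M^*\}$ is a \emph{right} ideal of $R$. Consequently the quasi-coherent module of condition (4) is $S\mapsto I_m\otimes_R S$ (your expression $S\otimes_R I_m$ is not even defined, as $I_m$ carries no left module structure), and its image in $\mathcal R(S)=S$ is $I_mS$, not $SI_m$. Moreover, with your choice $N=S/SI_m$ the computation does not close: exactness of $SI_m\otimes_RM\to S\otimes_RM\to N\otimes_RM\to 0$ gives $1\otimes m=\sum_k x_k\otimes m_k$ with $x_k\in SI_m$, and pushing along $s\otimes m'\mapsto s\psi(m')$ yields only $\psi(m)\in SI_mS$, which is strictly larger than $SI_m$ in general. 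Taking instead $N=S/I_mS$ repairs this: one gets $\psi(m)\in I_mS$, because $I_mS$ is a right ideal of $S$.

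Second, and more substantively, condition (4) demands an actual factorization $f=\iota\circ g$ with $g\colon\mathcal M^\vee\to\mathcal I_m$ a morphism of right $\R$-modules, where $\iota_S\colon I_m\otimes_RS\to S$ need not be injective; the pointwise containment $f_S(\psi)\in\Ima\,\iota_S$ that your argument produces does not by itself yield a functorial lift, so your sentence claiming that this ``is precisely the factorisation required by (4)'' conflates two different statements. The gap can be closed with your own device: apply the injectivity of $N\otimes_RM\to\prod_{M^*}N$ to the right module $N=R/I_m$ to get $m\in I_mM$, that is $m=\sum_k\phi_k(m)m_k$ with $\phi_k\in M^*$, $m_k\in M$; then $g_S(\psi):=\sum_k\phi_k(m)\otimes\psi(m_k)$ is a well-defined, functorial, $S$-linear lift of $\mathrm{ev}_m$. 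Symmetrically, in $(4)\Rightarrow(1)$ you assert that the factorization gives $\psi(m)\in I_mN$ for \emph{all} modules $N$ and all $\psi\colon M\to N$; but (4) is a statement about $R$-algebras only, and the passage to arbitrary modules is exactly where Hypothesis \ref{HP2} and Lemma \ref{L6.5A}\,(4) must be invoked: the adjunction of Theorem \ref{T3} is compatible with composition, hence carries $\mathcal M^\vee\to\mathcal I_m\to\mathcal R$ to a factorization $\Hom_R(M,-)\to I_m\otimes_R(-)\to\Id$ of functors on modules. A naive evaluation at tensor algebras does not work here, since $q\mapsto q\otimes 1\in Q\otimes_{\ZZ}R$ is not injective in general. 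With that transfer made, taking $N=M$ and $\psi=\mathrm{id}$ gives $m\in I_mM$, and \cite{mittagleffler} concludes, as you indicate.
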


\section{Preliminaries}\label{preliminar}

Let $\,R\,$ be an associative ring with  unit, and let $\,\mathcal R\,$ be the covariant functor from the category of $\,R$-algebras to the category of $\, R$-algebras, defined by $\,{\mathcal R}(S):=S$, for any $R$-algebra $\,S$.


\medskip
\begin{definition}
A {\sl functor of $\,\mathcal{R}$-modules} is a covariant functor  $\,\mathbb M \colon R\text{-Alg} \to \mathbb Z\text{-Mod}\,$ together with a morphism of functors of sets $\,{\mathcal R}\times \mathbb M\to \mathbb M\,$ that endows
$\,\mathbb M(S)\,$ with an $\,S$-module structure, for any $\,R$-algebra $\,S$. 

A {\sl morphism of $\,{\mathcal R}$-modules} $\,f\colon \mathbb M\to \mathbb M'\,$
is a morphism of functors such that the morphisms $\,f_{S}\colon \mathbb M({S})\to
\mathbb M'({S})\,$ are morphisms of $\,{S}$-modules.
\end{definition}
\medskip

\begin{definition}  If $\,\mathbb M\,$ is an $\,\mathcal R$-module, the {\sl  dual} $\,\mathbb M^\vee\,$ is the following functor $\, R\text{-Alg} \to \mathbb Z\text{-Mod}\,$ 
$$\mathbb M^\vee (S):=
\Hom_{\mathcal R}(\mathbb M,\mathcal S),$$
which is a functor of right $\R$-modules.\end{definition}

If $\,S\,$ is an $R$-algebra, the restriction of an $\,{\mathcal R}$-module $\,\mathbb M\,$ to the category of ${S}$-algebras will be written $$\mathbb M_{\mid {S}}(S'):=\mathbb M(S'),$$ for any ${S}$-algebra $S'$.

\medskip
\begin{definition}
The {\sl functor of homomorphisms} $\,{\mathbb Hom}_{{\mathcal R}}(\mathbb M,\mathbb M')\,$ is the covariant functor $\, R\text{-Alg} \to \mathbb Z\text{-Mod}\,$ defined by $${\mathbb Hom}_{{\mathcal R}}(\mathbb M,\mathbb M')({S}):={\rm Hom}_{\mathcal {S}}(\mathbb M_{|{S}}, \mathbb M'_{|{S}}), $$ where $\,\Hom_{{\mathcal S}}(\mathbb M_{|{S}},\mathbb M'_{|{S}})$ stands for the set\footnote{In this paper, we will only consider well-defined functors $\,{\mathbb Hom}_{{\mathcal R}}(\mathbb M,\mathbb M')$, that is to say, functors such that $\,\Hom_{\mathcal {S}}(\mathbb M_{|{S}},\mathbb {M'}_{|{S}})\,$ is a set, for any $R$-algebra ${S}$.} of all morphisms of $\,{\mathcal S}$-modules from $\,\mathbb M_{|{S}}\,$ to $\,\mathbb M'_{|{S'}}$.

In the following, it will also be convenient to consider another notion of dual module:
$\,\mathbb{M}^*\,$ is the functor  of right $\R$-modules defined by $$\,\mathbb M^*:=\mathbb Hom_{{\mathcal R}}(\mathbb M,\mathcal R). $$
\end{definition}
\medskip



\medskip
\begin{definition} The {\sl quasi-coherent $\mathcal{R}$-module} associated with an $R$-module $\,M\,$ is the following covariant functor 
$$\,{\mathcal{M}} \colon R\text{-Alg} \to {\mathbb Z}\text{-Mod}\, , \quad \mathcal{M} (S) := S \otimes_R M. $$
\end{definition}
\medskip


Quasi-coherent modules are determined by its global sections. In particular, we will make use of the following statement, whose proof is immediate:

\medskip
\begin{proposition} \label{tercer}
Restriction to global sections $\,f\mapsto f_R\,$ defines a bijection:
$${\rm Hom}_{\mathcal R} ({\mathcal M}, \mathbb M) = {\rm Hom}_R (M, \mathbb M(R))\,, $$ for any quasi-coherent $\,\mathcal{R}$-module $\,\mathcal{M}\,$ and any $\,{\mathcal R}$-module $\,\mathbb M$.
\end{proposition}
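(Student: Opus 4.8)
The plan is to exhibit an explicit inverse to the restriction map $f\mapsto f_R$, so that the only real content is checking that the obvious candidate is well defined and natural. First I would observe that since $\mathcal{M}(R)=R\otimes_R M=M$, the component $f_R$ of a morphism of $\mathcal{R}$-modules is genuinely an $R$-module homomorphism $M\to\mathbb{M}(R)$, so $f\mapsto f_R$ lands in the stated set. For injectivity I would use naturality together with the $S$-linearity of each component: given $f$ and an $R$-algebra $S$ with structure morphism $R\to S$, naturality applied to $R\to S$ forces $f_S(1\otimes m)=\mathbb{M}(R\to S)(f_R(m))$ for every $m\in M$, because the left vertical arrow $M\to S\otimes_R M$ of the naturality square is $m\mapsto 1\otimes m$. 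Since $f_S$ is $S$-linear and every element of $S\otimes_R M$ is a finite sum $\sum_i s_i\otimes m_i$, this yields
$$f_S\Big(\sum_i s_i\otimes m_i\Big)=\sum_i s_i\cdot\mathbb{M}(R\to S)(f_R(m_i)),$$
so $f$ is completely determined by $f_R$, and the restriction map is injective.

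For surjectivity I would turn the displayed formula into a definition. Given an $R$-module homomorphism $\varphi\colon M\to\mathbb{M}(R)$, I set $f_S(\sum_i s_i\otimes m_i):=\sum_i s_i\cdot\mathbb{M}(R\to S)(\varphi(m_i))$ for each $R$-algebra $S$. The points to verify are, first, that the assignment $(s,m)\mapsto s\cdot\mathbb{M}(R\to S)(\varphi(m))$ is $R$-balanced, so that $f_S$ is a well-defined $S$-linear map; this follows from the $R$-linearity of $\varphi$ together with the fact that $\mathbb{M}(R\to S)$ intertwines the $R$-action on $\mathbb{M}(R)$ with multiplication by the image of $R$ in $S$. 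Second, that the family $\{f_S\}$ is natural in $S$: for an $R$-algebra morphism $g\colon S\to S'$, checking the naturality square on a generator $s\otimes m$ reduces to the identity $g\circ(R\to S)=(R\to S')$, valid precisely because $g$ is a morphism of $R$-algebras, combined with the compatibility of $\mathbb{M}(g)$ with the module structures. Finally $f_R=\varphi$, because the structure morphism $R\to R$ is the identity and $\mathbb{M}$ preserves it; hence the two constructions are mutually inverse.

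The main obstacle, such as it is, is purely bookkeeping: one must keep straight throughout that $\mathbb{M}(R\to S)$ is a homomorphism over the algebra map $R\to S$, and use exactly this compatibility to verify both the $R$-balancedness of $f_S$ and the naturality of the family. I expect no genuine difficulty, and in particular no finiteness, projectivity, or flatness hypothesis on $M$ is required, which is why the statement holds for an arbitrary quasi-coherent $\mathcal{R}$-module $\mathcal{M}$ and an arbitrary $\mathcal{R}$-module $\mathbb{M}$.
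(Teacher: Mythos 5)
Your proof is correct, and it is precisely the argument the paper has in mind: the paper offers no explicit proof (it declares the statement ``immediate''), and the implicit argument is exactly yours, namely that naturality along the structure morphisms $R\to S$ together with $S$-linearity forces $f_S(\sum_i s_i\otimes m_i)=\sum_i s_i\cdot\mathbb M(R\to S)(f_R(m_i))$, which simultaneously gives injectivity and the recipe for extending an arbitrary $\varphi\in\Hom_R(M,\mathbb M(R))$. Your care in noting that the compatibility $\mathbb M(u)(r\cdot x)=u(r)\cdot\mathbb M(u)(x)$ comes from the naturality of the structural morphism $\mathcal R\times\mathbb M\to\mathbb M$ is exactly the bookkeeping the paper suppresses.
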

\medskip

As a consequence, both notions of dual module introduced above coincide on quasi-coherent modules; that is, $\,\mathcal M^*=\mathcal M^\vee\,$. 

In fact, if $\,S\,$ is an $R$-algebra, then 
$$\,\mathcal{M}^\vee (S) = \Hom_{\mathcal R}(\mathcal{M},\mathcal S) = \Hom_{R}(M , S)\,$$
and, as $\,{\mathcal M}_{\mid {S}}\,$ is the quasi-coherent $\mathcal {S}$-module associated with $\,S\otimes_R M \,$, 
$$\mathcal M^*(S)=\Hom_{\mathcal S}(\mathcal M_{|S},\mathcal S)=\Hom_S(S\otimes_R M,S)=\Hom_R(M,S) = \mathcal{M}^\vee (S) \ . $$




\bigskip

\begin{definition} We will say that a functor from the category of right  $R$-modules to the category of abelian groups is a functor of abelian groups.

\end{definition}

\begin{definition} Given  a functor of abelian groups $\mathbb G$, let $\mathbb G^o$ be the functor from the category of $R$-algebras to the category of abelian groups defined by 
$$\mathbb G^o(S):=\mathbb G(S),$$
for any $R$-algebra $S$. Given a morphism of $R$-algebras $w\colon S\to S'$
then $\mathbb G^o(w):=\mathbb G(w)$.

\end{definition}

\begin{remark} \label{recall}
Observe that we can define
$s* g:=\mathbb G(s\cdot)(g)$, for any
$g\in \mathbb G^o(S)$ and $s\in S$ (where $s\cdot S\to S$ is defined by $s\cdot (s'):=s\cdot s'$). If $\mathbb G$ is additive, then $\mathbb G^o$ is a functor of $\R$-modules.
\end{remark}

Any morphism $\phi\colon \mathbb G\to \mathbb G'$  of functors of abelian groups
defines the morphism $\phi^o\colon \mathbb G^o\to \mathbb G'^o$, $\phi^o_S:=\phi_S$ for any $R$-algebra $S$. Obviously,
$$\phi^o_S(s* g)=\phi_S(\mathbb G(s\cdot)(g))=\mathbb G'(s\cdot )(\phi_S(g))=
s*\phi^o_S(g).$$

Finally, any definition or statement in the category of $\,\mathcal R$-modules has a corresponding definition or statement in the category of right $\,\mathcal R$-modules, that we will use without more explicit mention.

As examples, if $\,\mathbb{M}\,$ is an $\,\mathcal{R}$-module, then $\,\mathbb M^* = {\mathbb Hom}_{\mathcal R}(\mathbb M,\mathcal R)\,$ is a right $\,\mathcal R$-module. If $\,\mathbb N\,$ is a right $\,\mathcal R$-module, then the dual module defined by 
$$\mathbb N^*:=\mathbb Hom_{\mathcal R}(\mathbb N,\mathcal R)$$ is an $\,\mathcal R$-module, etc.

\section{Extension of a functor on the category of algebras to a functor on the category of modules}

\label{SectExt}

\begin{notation} 
If $\,M\,$ is an $\,R$-module, observe that $\,M\otimes_{\ZZ} R\,$ is an $R$-bimodule and we can 
consider the tensorial $\,R$-algebra \label{N3.4}
$$R\langle M\rangle :=T^\cdot_{R} (M\otimes_{\ZZ} R)=(T^{\cdot}_{\ZZ} M)\otimes_{\ZZ} R \, . $$
\end{notation}
\medskip

\begin{remark} If $N$ is a right $R$-module, then:
$$R\langle N\rangle:=T^\cdot_R(R\otimes_{\ZZ} N) \, . $$ 
\end{remark}
\medskip

\begin{lemma} The following functorial map is bijective:
$$\Hom_{R-alg}(R\langle M\rangle, S)\to \Hom_{R}(M,S) \ , \quad f\mapsto f'\  ,$$
where $\,f'(m):=f(m\otimes 1)\,$ for any $\, m\in M\,$.\end{lemma}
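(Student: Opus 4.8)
The plan is to realize this bijection as the composite of two standard adjunctions and then to check that the composite is the stated map $f\mapsto f'$.

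First I would invoke the universal property of the tensor $R$-algebra: for any $R$-bimodule $P$ and any $R$-algebra $S$, restriction to $P\subseteq T^\cdot_R(P)$ gives a bijection $\Hom_{R\text{-alg}}(T^\cdot_R(P),S)=\Hom_{R\text{-bimod}}(P,S)$, where $S$ is regarded as an $R$-bimodule through its structure morphism $R\to S$, acting by left and right multiplication. Note that this morphism need not be central, so the bimodule structure must be tracked on both sides. Applying this with $P=M\otimes_{\ZZ} R$ yields $\Hom_{R\text{-alg}}(R\langle M\rangle,S)=\Hom_{R\text{-bimod}}(M\otimes_{\ZZ} R,S)$.

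Second, I would identify the right-hand side with $\Hom_R(M,S)$. The key observation is that the functor sending a left $R$-module $M$ to the bimodule $M\otimes_{\ZZ} R$ is left adjoint to the functor forgetting the right structure. Concretely, an $R$-bimodule map $\phi\colon M\otimes_{\ZZ} R\to S$ is determined by the values $\psi(m):=\phi(m\otimes 1)$, because $\phi(m\otimes r)=\phi((m\otimes 1)\cdot r)=\psi(m)\cdot r$, and $\psi$ is exactly a left $R$-module map $M\to S$ by the left $R$-linearity of $\phi$. Conversely, given a left $R$-module map $\psi\colon M\to S$, the assignment $m\otimes r\mapsto \psi(m)\cdot r$ is $\ZZ$-bilinear, hence well defined, and is readily checked to be an $R$-bimodule map; the two assignments are mutually inverse. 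This gives $\Hom_{R\text{-bimod}}(M\otimes_{\ZZ} R,S)=\Hom_R(M,S)$.

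Finally I would compose the two identifications and read off the resulting map: an algebra morphism $f$ restricts to the bimodule morphism $M\otimes_{\ZZ} R\to S$, which in turn corresponds to $m\mapsto f(m\otimes 1)$, that is, precisely to $f'$; so the composite bijection is the map of the statement, and functoriality in $S$ is clear since every step is natural. I expect the only point requiring care, rather than a genuine obstacle, to be the bookkeeping of the bimodule structure on $S$ when $R\to S$ is non-central, together with the verification that the inverse assignment $m\otimes r\mapsto \psi(m)\cdot r$ is well defined on the $\ZZ$-tensor product; both are routine.
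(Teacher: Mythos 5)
Your proof is correct and follows essentially the same route as the paper's: the paper's one-line argument $\Hom_{R\text{-alg}}(T^\cdot_R(M\otimes_{\ZZ}R),S)=\Hom_{R\otimes_{\ZZ}R}(M\otimes_{\ZZ}R,S)=\Hom_R(M,S)$ is exactly your composite of the tensor-algebra adjunction with the bimodule-extension adjunction. You simply spell out the details (well-definedness, mutual inverses, and the identification of the composite with $f\mapsto f'$) that the paper leaves implicit.
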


\begin{proof} $ \Hom_{R-alg}( T^\cdot_{R} (M\otimes_{\ZZ} R),S)=\Hom_{R\otimes_{\ZZ} R} (M\otimes_{\ZZ} R,S) =\Hom_R(M,S).$
\end{proof} 
\medskip 
 
 Any $\,R$-linear morphism $\,\phi\colon M\to M'\,$ uniquely extends to a morphism of $\,R$-algebras $\,\tilde \phi\colon R\langle M\rangle \to R\langle M'\rangle$, $\,m\otimes 1\mapsto \phi(m)\otimes 1$.

If we use the notation 
$$M\overset {n}\cdots M\cdot R:=M\otimes_{\ZZ}\overset {n}\cdots\otimes_{\ZZ} M\otimes_{\ZZ} R\,,\,\,\, m_1\cdots m_{n}\cdot r\mapsto m_1\otimes\cdots\otimes m_n\otimes r,$$ 
then
$$R\langle M\rangle =\oplus_{n=0}^\infty \,M\overset {n}\cdots M\cdot R\,,$$ and the product in this algebra can be written as follows:
$$(m_1\cdots m_{n}\cdot r)\cdot (m'_1\cdots m'_{n'}\cdot r')=
m_1\cdots m_{n}\cdot (r m'_1)\cdot m'_2\cdots m'_{n'}\cdot r'.$$

\medskip

\medskip
\begin{notation} Let us use the following notation 
$$M\oplus Rx:=M\oplus R \quad , \quad (m,r\cdot x)\mapsto (m,r) \, . $$
Likewise, if $M$ is a right $R$-module $M\oplus xR:=M\oplus R,$ $(m,x\cdot r)\mapsto (m,r) \, . $
\end{notation}
\medskip

\begin{notation} Let $M$ be a right $R-$module. Consider the morphisms of $R-$algebras $$in,h_x: R\langle M\rangle \longrightarrow R\langle M\oplus xR\rangle $$ induced  by the morphism of $R$-modules $M\to R\langle M\oplus xR\rangle$, $m\mapsto m,\,x\cdot m$ for any $m\in M\,$. 
\end{notation}

\medskip
\begin{definition} \label{D4.2} Given a functor  of  $\,\R -$modules $\,\mathbb F\,$, let $\,{\mathbb F}^r\,,$ be the functor of abelian groups, defined as follows: $\,{\mathbb F}^r(M)\,$
is the kernel of the morphism
$$\xymatrix{\mathbb F(R\langle M\rangle) \ar[rr]^-{\mathbb F(h_x)-x\cdot\mathbb F(in)} &   &\mathbb  F(R\langle M\oplus xR\rangle)\\ f \ar@{|->}[rr] &  & \mathbb F(h_x)(f)- x\cdot \mathbb F(in)(f),}$$
for any right $R$-module $M$ and any $f\in  \mathbb F(R\langle M\rangle)$.
\end{definition}

If $\,w\colon M\to M'\,$ is a morphism of $R$-modules, it induces morphisms of $R$-algebras
$$R\langle w\rangle \colon R\langle M\rangle 
\to R\langle M'\rangle \ ,  \quad R\langle w\rangle(m)=w(m)$$ and 
$R\langle w\oplus 1\rangle\colon R\langle M\oplus xR\rangle 
\to R\langle M'\oplus xR\rangle$, $R\langle w\oplus 1\rangle(m)=w(m)$, $R\langle w\oplus 1\rangle(x)=x$.
Observe that $R\langle w\oplus 1\rangle \circ h_x=h_x\circ R\langle w\rangle$. Hence,
we have the morphism 
$${\mathbb F}^r(w)\colon {\mathbb F}^r(M)\to {\mathbb F}^r(M')\, , \quad {\mathbb F}^r(w)(f):=\mathbb F(R\langle w\rangle)(f)$$ for  any $f\in {\mathbb F}^r(M)\subset \mathbb F(R\langle M\rangle)$.

\begin{note} In a similar vein, we can define the {\sl extension} of a functor $\,\mathbb F\,$ of right $\R$-modules, which is a functor $\,{\mathbb F}^r\,$ from the category of $R$-modules to the category of abelian groups.\end{note}

\begin{notation} 
Let $N$ be a right  $R$-module and let $M$ be an $R$-module.  Consider the sequence of morphisms of groups
$$N\otimes_RM\overset{i}\to N\otimes_R M\otimes_{\ZZ} R\dosflechasa{p_1}{p_2}   N\otimes_R M\otimes_{\ZZ} R\otimes_{\ZZ} R$$
where $\,i(n\otimes m):=n\otimes m\otimes 1$, $\,p_1(n\otimes m\otimes r):=n\otimes m\otimes r\otimes 1$ and
$\,p_2(n\otimes m\otimes r):=n\otimes m\otimes 1\otimes r$, is exact.
\end{notation}

\begin{lemma} \label{reperab} Let $\,M\,$ be an $\,R$-module and $\,N\,$ a right $\,R$-module. Then, 
$$\mathcal N^r(M)=\Ker[N\otimes_R M\otimes_{\ZZ} R\dosflechasab{p_1}{p_2} N\otimes_R   M\otimes_{\ZZ} R\otimes_{\ZZ} R],$$
\end{lemma}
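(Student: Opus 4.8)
The plan is to unwind the definition of $\,\mathcal N^r\,$ and reduce the kernel computation to a degree-by-degree analysis of the grading on the tensor algebra $\,R\langle M\rangle\,$. First I would substitute $\,\mathcal N(S)=N\otimes_R S\,$ into Definition \ref{D4.2} in its right-module form, so that $\,\mathcal N^r(M)\,$ becomes the kernel of
$$\mathcal N(h_x)-x\cdot\mathcal N(in)\colon N\otimes_R R\langle M\rangle\longrightarrow N\otimes_R R\langle M\oplus Rx\rangle,$$
where $\,\mathcal N(h_x)=\Id_N\otimes h_x$, $\,\mathcal N(in)=\Id_N\otimes in$, and multiplication by $\,x\,$ is the module structure of $\,N\otimes_R R\langle M\oplus Rx\rangle\,$ over the algebra $\,R\langle M\oplus Rx\rangle$, i.e.\ $\,n\otimes s\mapsto n\otimes sx\,$.

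Next I would exploit the grading $\,R\langle M\rangle=\oplus_{n\ge 0} M\overset{n}{\cdots}M\cdot R\,$. Since the left $\,R$-action lives on the first tensor factor, tensoring with $\,N\,$ gives $\,N\otimes_R R\langle M\rangle=N\oplus\bigoplus_{n\ge1}(N\otimes_R M)\otimes_{\ZZ} M^{\otimes_{\ZZ}(n-1)}\otimes_{\ZZ} R\,$, and analogously for the target. On graded pieces $\,in\,$ is degree-preserving (the inclusion), $\,h_x\,$ sends degree $\,n\,$ to degree $\,2n\,$ (each generator $\,m\mapsto mx\,$), and multiplication by $\,x\,$ raises degree by one. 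Writing a general element as $\,\xi=\sum_n\xi_n\,$ and splitting the equation $\,\mathcal N(h_x)(\xi)=x\cdot\mathcal N(in)(\xi)\,$ by target degree, the contributions to target degree $\,d\,$ come only from $\,\xi_{d/2}\,$ (through $\,h_x$) and $\,\xi_{d-1}\,$ (through $\,x\cdot in$).

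Then I would run the degree bookkeeping to kill everything but $\,\xi_1\,$. Target degree $\,0\,$ forces $\,\xi_0=0$; each odd target degree $\,2k+1\,$ gives $\,x\cdot\mathcal N(in)(\xi_{2k})=0$; and each even target degree $\,2k\,$ with $\,k\ge2\,$ places the two images $\,\mathcal N(h_x)(\xi_k)\,$ and $\,x\cdot\mathcal N(in)(\xi_{2k-1})\,$ in \emph{different} summands (the adjoined $\,x$'s sit in different slots), so both must vanish. Using that $\,in$, $\,h_x\,$ and right multiplication by $\,x\,$ are split injective as maps of left $\,R$-modules—hence remain injective after applying $\,N\otimes_R(-)$—this forces $\,\xi_n=0\,$ for all $\,n\ne1$. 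The surviving component $\,\xi_1\in N\otimes_R M\otimes_{\ZZ} R\,$ is then subject only to the condition coming from target degree $\,2$. For $\,\xi_1=n\otimes m\otimes r\,$ both $\,\mathcal N(h_x)(\xi_1)\,$ and $\,x\cdot\mathcal N(in)(\xi_1)\,$ land in the summand $\,N\otimes_R M\otimes_{\ZZ} Rx\otimes_{\ZZ} R\cong N\otimes_R M\otimes_{\ZZ} R\otimes_{\ZZ} R\,$; carrying out the algebra products and using $\,Rx\cong R\,$ gives $\,\mathcal N(h_x)(n\otimes m\otimes r)=n\otimes m\otimes 1\otimes r\,$ and $\,x\cdot\mathcal N(in)(n\otimes m\otimes r)=n\otimes m\otimes r\otimes1$, i.e.\ exactly $\,p_2\,$ and $\,p_1$. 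Hence $\,\Ker[\mathcal N(h_x)-x\cdot\mathcal N(in)]\,$ is precisely $\,\Ker[\,p_1-p_2\,]$, the asserted equalizer.

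I expect the main obstacle to be the grading and summand bookkeeping in the middle step: one must verify that in the higher target degrees the two images genuinely occupy distinct summands, so that no spurious cancellation keeps a higher $\,\xi_n\,$ alive, and that the structure maps stay injective after $\,N\otimes_R(-)\,$ (which is why their splitting as left $\,R$-module maps is used). The final degree-$2$ identification with $\,p_1\,$ and $\,p_2\,$ is a short computation with the product formula of $\,R\langle M\oplus Rx\rangle$, where the only real care is tracking the left/right conventions and the identification $\,Rx\cong R\,$.
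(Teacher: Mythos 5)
Your proof is correct, and it shares the paper's overall skeleton---confine the kernel to the degree-one piece $N\otimes_R M\otimes_{\ZZ}R$ of the grading, then identify the two maps there with $p_2$ and $p_1$---but the mechanism you use for the confinement step is genuinely different. The paper composes with the algebra epimorphism $R\langle M\oplus Rx\rangle\to R\langle M\rangle[x]$ ($m\mapsto m$, $x\mapsto x$), where $x$ becomes a central polynomial variable. Since $N\otimes_R R\langle M\rangle[x]=\oplus_k\,(N\otimes_R R\langle M\rangle)x^k$ and a homogeneous component $\xi_n$ maps to $\xi_n(x-x^n)$, the $x^k$-coefficient of the image of $\xi=\sum_n\xi_n$ is, for every $k\ne 1$, equal to $-\xi_k$ itself; hence the kernel of the composite, which contains $\mathcal N^r(M)$, is visibly the degree-one part, with no injectivity arguments whatsoever. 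Your route stays inside $R\langle M\oplus Rx\rangle$ and therefore has to pay for the finer word-by-word decomposition: you must check that the two images occupy distinct summands in target degrees $\geq 4$, and that $h_x$ and right multiplication by $x$ stay injective after applying $N\otimes_R(-)$. Your justification of the latter (split injectivity as maps of left $R$-modules) is the right one, and the point deserves emphasis: the retraction has to contract each inserted $Rx$-slot into the adjacent factor using the $R$-module action, since $\ZZ\to R$ need not split as abelian groups, so no naive splitting exists. Note also that the degree-two identification hinges on the side conventions ($h_x\colon m\mapsto mx$ and the action $\xi\mapsto\xi x$, as made explicit in the proof of Proposition \ref{1.30}): with mismatched conventions the two degree-two images would land in \emph{different} summands and the kernel would wrongly come out zero; your writeup uses the correct ones. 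In summary, your argument is a self-contained, more hands-on variant; the paper's quotient trick buys brevity, collapsing the noncommutative word bookkeeping to a comparison of coefficients of powers of one central variable and eliminating the injectivity lemmas altogether.
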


\begin{proof}  It is easy to prove that the kernel of the morphism
$$N\otimes_R R\langle M\rangle\to N\otimes_R  R\langle M\rangle[x] , \,\,n\otimes p(m)\mapsto
n\otimes (p(m)x-p(mx))$$
is included in  $N\otimes_R M\otimes_{\ZZ} R$. 
Observe that the morphism of $R$-algebras $R\langle M\oplus Rx\rangle\to R\langle M\rangle[x]$, $m\mapsto m$ and $x\mapsto x$, is an epimorphism. 

Then,  $$\mathcal N^r(M)\subseteq  N\otimes_R M\otimes_{\ZZ} R$$ and 
$\mathcal N^r(M)=\Ker(p_1-p_2).$
\end{proof}

\medskip
\begin{remark} \label{repera2b} Observe that 
$${\mathcal N}^r(M)=\Ker[N\otimes_R M\otimes_{\ZZ} R\overset{p_1-p_2}\longrightarrow N\otimes_R M\otimes_{\ZZ} R\otimes_{\ZZ} R]={\mathcal M}^r(N)\ . $$
\end{remark}
\medskip

\begin{proposition} \label{super} Let $\,N\,$ be a right $\,R$-module and let $\,M\,$ be an $\,R$-module. If  $\,M\,$ (or $\,N\,$) is an $\,R$-bimodule or a flat module, then $$\mathcal N^r(M)=N\otimes_R M.$$
\end{proposition}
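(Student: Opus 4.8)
By Remark~\ref{repera2b}, we have $\mathcal N^r(M)=\mathcal M^r(N)$, and both are described by Lemma~\ref{reperab} as the same kernel
$$\Ker[N\otimes_R M\otimes_{\ZZ} R\dosflechasab{p_1}{p_2} N\otimes_R M\otimes_{\ZZ} R\otimes_{\ZZ} R],$$
where $p_1(n\otimes m\otimes r)=n\otimes m\otimes r\otimes 1$ and $p_2(n\otimes m\otimes r)=n\otimes m\otimes 1\otimes r$. The plan is to identify this kernel with $N\otimes_R M$ under each of the stated hypotheses, using the fact that $N\otimes_R M$ always embeds via $i(n\otimes m)=n\otimes m\otimes 1$ into $N\otimes_R M\otimes_{\ZZ} R$, and that the image of $i$ is clearly contained in the kernel of $p_1-p_2$ (since both $p_1(i(n\otimes m))$ and $p_2(i(n\otimes m))$ equal $n\otimes m\otimes 1\otimes 1$). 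So the content is to prove the reverse inclusion, i.e. that every element killed by $p_1-p_2$ lies in the image of $i$.

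First I would treat the case where $M$ is an $R$-bimodule. The key observation is that when $M$ carries a right $R$-action, the tensor factor $M\otimes_{\ZZ} R$ can be collapsed: the multiplication map $M\otimes_{\ZZ} R\to M$, $m\otimes r\mapsto m\cdot r$, together with the unit map $m\mapsto m\otimes 1$, realizes $N\otimes_R M\otimes_{\ZZ} R$ as an extension in which $N\otimes_R M$ sits as a retract. More precisely, I would build an explicit splitting of $i$: the map $s\colon N\otimes_R M\otimes_{\ZZ} R\to N\otimes_R M$ sending $n\otimes m\otimes r\mapsto n\otimes(m\cdot r)$ satisfies $s\circ i=\Id$, and a direct computation on the two coface maps $p_1,p_2$ shows that any element of $\Ker(p_1-p_2)$ is recovered from its image under $s$ via $i$. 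This is the standard argument that a cosimplicial object built from a ring with a section has trivial cohomology in the relevant degree.

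For the flat case, the argument is cleaner and more conceptual. If $M$ is flat as an $R$-module, then $N\otimes_R(-)$ is exact, so tensoring the exact sequence
$$M\overset{i}\to M\otimes_{\ZZ} R\dosflechasab{p_1}{p_2} M\otimes_{\ZZ} R\otimes_{\ZZ} R$$
(which is the displayed exact sequence from the Notation preceding Lemma~\ref{reperab}, specialized to $N=R$, and which is exact because $R\to R\otimes_{\ZZ} R$ has the obvious section) by $N\otimes_R(-)$ preserves exactness. Hence $N\otimes_R M=\Ker[N\otimes_R M\otimes_{\ZZ} R\rightrightarrows N\otimes_R M\otimes_{\ZZ} R\otimes_{\ZZ} R]=\mathcal N^r(M)$. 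The symmetric hypothesis ``or $N$'' is handled by the identity $\mathcal N^r(M)=\mathcal M^r(N)$ from Remark~\ref{repera2b}, which lets me interchange the roles of $M$ and $N$ and apply the bimodule or flatness argument to whichever of the two is assumed to satisfy the hypothesis.

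The main obstacle I anticipate is the bimodule case, specifically verifying that the retraction $s$ genuinely detects the kernel rather than merely providing a left inverse to $i$ on the nose. The delicate point is that $s\circ i=\Id$ only gives that $i$ is injective with a complement; to conclude $\Ker(p_1-p_2)\subseteq\Ima(i)$ I must show that applying $i\circ s$ to an element $z\in\Ker(p_1-p_2)$ returns $z$ itself, which requires exploiting the cosimplicial identity relating $s$, $i$, $p_1$ and $p_2$ (an extra degeneracy or contracting homotopy). The flat case, by contrast, is essentially immediate once the base exact sequence is in hand, so the real bookkeeping lives in making the bimodule contracting-homotopy computation precise.
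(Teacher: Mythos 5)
Your bimodule case is sound and coincides with the paper's own argument: the ``extra degeneracy'' you anticipate needing is the map $s'\colon N\otimes_R M\otimes_{\ZZ} R\otimes_{\ZZ} R\to N\otimes_R M\otimes_{\ZZ} R$, $s'(n\otimes m\otimes r\otimes r')=n\otimes mr\otimes r'$, which satisfies $s'\circ p_2=\Id$ and $s'\circ p_1=i\circ s$; hence for $x\in\Ker(p_1-p_2)$ one gets $0=s'(p_1(x))-s'(p_2(x))=i(s(x))-x$, so $x\in\Ima i$. That verification is routine, so there is no real gap in that half.

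The flat case, however, contains a genuine error. You write that if $M$ is flat then $N\otimes_R(-)$ is exact; but exactness of $N\otimes_R(-)$ is a property of $N$ (it holds precisely when $N$ is flat as a right $R$-module), whereas flatness of $M$ makes $(-)\otimes_R M$ exact. So with your setup --- specializing the base sequence to $N=R$ to get the sequence of left $R$-modules $M\to M\otimes_{\ZZ}R\rightrightarrows M\otimes_{\ZZ}R\otimes_{\ZZ}R$ and then applying $N\otimes_R(-)$ --- you are in fact proving the ``$N$ flat'' case, and the step ``preserves exactness'' fails under the hypothesis you actually have ($M$ flat). Note that you cannot evade this by saying the base sequence is split: its contracting homotopy $m\otimes r\mapsto rm$ is only $\ZZ$-linear, not $R$-linear, so an arbitrary additive functor need not preserve its exactness --- this is exactly where a flatness hypothesis must do work. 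The paper does the transposed construction: it specializes the bimodule case to the bimodule $M=R$, obtaining the exact sequence of \emph{right} $R$-modules $N\to N\otimes_{\ZZ}R\rightrightarrows N\otimes_{\ZZ}R\otimes_{\ZZ}R$ (with $R$ acting through the factor $N$), and then tensors with $(-)\otimes_R M$, exact because $M$ is flat. Your argument can be repaired either that way, or by correcting your hypothesis to ``$N$ flat'' and then invoking your own symmetry device $\mathcal N^r(M)=\mathcal M^r(N)$ to swap the roles. Relatedly, your justification of the base sequence's exactness --- ``because $R\to R\otimes_{\ZZ}R$ has the obvious section'' --- is not adequate: the multiplication retraction $R\otimes_{\ZZ}R\to R$ yields $s'\circ p_1=s'\circ p_2=\Id$, which gives no information about $\Ker(p_1-p_2)$; the correct justification is the just-proven bimodule case applied with $R$ as the bimodule, which is how the paper phrases it.
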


\begin{proof} By Lemma \ref{reperab}, we have to prove that $\Ker(p_1-p_2)=N\otimes_R M$.

Suppose that $\,M\,$ is a bimodule.
It is clear that $\,\Ima i\subseteq \Ker(p_1-p_2)$. 
Let 
$\,s\colon N\otimes_R M\otimes_{\ZZ} R\to N\otimes_R M $, $s(n\otimes m\otimes r)=n\otimes mr$
and $$\,s'\colon N\otimes_R M\otimes_{\ZZ} R \otimes_{\ZZ} R \to N\otimes_R M \otimes_{\ZZ} R\,,\,\,\, s'(n\otimes m\otimes r\otimes r')=n\otimes mr\otimes r'.$$ 
Observe that $\,s\circ i=\Id$, so that $i$ is injective. Also, $\,s'\circ p_2=\Id\,$ and $\,s'\circ p_1=i\circ s$. Thus, if $\,x\in \Ker(p_1-p_2)$, then $\,p_1(x)-p_2(x)=0$; hence, $\,0=s'(p_1(x))-s'(p_2(x))=i(s(x))-x\,$ and $\,x\in \Ima i$.
Then,  $\Ker(p_1-p_2)=N\otimes_R M$.

In particular, taking the bimodule $\,M=R\,$, the following sequence of morphisms of groups is exact:
$$N\overset{i}\to N\otimes_{\ZZ} R\dosflechasa{p_1}{p_2}   N\otimes_{\ZZ} R\otimes_{\ZZ} R \ . $$ 
Thus, if $\,M\,$ is flat, tensoring by $\,M\,$ it  also follows that  $\Ker(p_1-p_2)=N\otimes_R M$.

\end{proof}

\begin{proposition} \label{super2} If there exists a central subalgebra $\,R'\subseteq R\,$ such that $\,Q\to Q\otimes_{R'} R\,$ is injective, for any $\,R'$-module $\,Q\,$, then
$$\mathcal N^r(M)=N\otimes_R M.$$

\end{proposition}

\begin{proof}
Let us write $\,M':=M\otimes_{R'} R\,$, which is a bimodule as follows:
$$r_1\cdot (m\otimes  r)\cdot r_2=r_1m\otimes rr_2.$$
The morphism of $R$-modules $i\colon M\to M'$, $i(m):= m\otimes 1$ is universally injective: Given an $R$-module $P$, put $Q:=P\otimes_R M$. Then, the morphism $P\otimes_R M=Q\to Q\otimes_{R'} R=P\otimes_R M'$ is injective.

Put  $Q:=M'/M$ and $M'':=Q\otimes_{R'} R$.  Let $p_1$ be the composite morphism
 $M'\to M'/M=Q\to Q\otimes_{R'}R =M''$.
The sequence of morphisms of $R$-modules
$$0\to M\overset i\to M'\overset p\to M''$$
is universally exact. Consider the following commutative diagram
$$\xymatrix @R10pt @C10pt {0\ar[r] & N\otimes_R M \ar[r]^-{Id\otimes i} \ar[d] & N\otimes_RM' \ar[r]^-{Id\otimes p}  \ar[d] & N\otimes_RM''\ar[d] \\ 0\ar[r] &  N\otimes_R M \otimes_{\ZZ} R\ar[r]^-{Id\otimes i\otimes Id}  \ar@<1ex>[d] \ar@<-1ex>[d]  & N\otimes_R M' \otimes_{\ZZ} R\ar[r]^-{Id\otimes p\otimes Id}  \ar@<1ex>[d] \ar@<-1ex>[d] & N\otimes_RM''\otimes_{\ZZ} R \ar@<1ex>[d] \ar@<-1ex>[d] \\ 0\ar[r] & 
 N\otimes_R M \otimes_{\ZZ} R \otimes_{\ZZ} R\ar[r]^-{i'} & N\otimes_RM'\otimes_{\ZZ} R\otimes_{\ZZ} R  \ar[r]^-{p'}  &N\otimes_R M''\otimes_{\ZZ} R\otimes_{\ZZ} R
}$$ (where $i'=\Id\otimes i\otimes Id\otimes Id$ and $p'=\Id\otimes p\otimes Id\otimes Id$) whose rows are exact, as well as both the second and third columns, by Proposition \ref{super}. Hence, the first column is exact too.

\end{proof}

\medskip
\begin{proposition}\label{1.30}   Let $\mathbb F$ be a functor of $\mathcal R$-modules. Then, $${\mathbb F^\vee}^r(M)=\Hom_{\mathcal R}(\mathbb F,\mathcal M),$$
for any $R$-module $M$. Hence, ${\mathbb F^\vee}^{ro}=\mathbb F$.

\end{proposition}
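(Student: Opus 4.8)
The plan is to reduce the computation of $({\mathbb F^\vee})^r(M)$ to the left exactness of $\Hom_{\mathcal R}(\mathbb F,-)$ together with Proposition \ref{super}, which already contains the only substantive algebra. Write $T:=R\langle M\rangle$ and $T':=R\langle M\oplus Rx\rangle$, and let $\mathcal T,\mathcal T'$ be their associated quasi-coherent $\mathcal R$-modules. First I would unwind the definitions: since $\mathbb F^\vee(S)=\Hom_{\mathcal R}(\mathbb F,\mathcal S)$, the two groups occurring in the right module version of Definition \ref{D4.2} (applied to the right $\mathcal R$-module $\mathbb F^\vee$) are $\mathbb F^\vee(T)=\Hom_{\mathcal R}(\mathbb F,\mathcal T)$ and $\mathbb F^\vee(T')=\Hom_{\mathcal R}(\mathbb F,\mathcal T')$. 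Passing to the associated quasi-coherent modules, the defining morphism of $({\mathbb F^\vee})^r(M)$ becomes post-composition with a morphism of $\mathcal R$-modules $\varphi\colon\mathcal T\to\mathcal T'$, namely the difference of the morphism induced by $h_x$ and of right multiplication by $x$ composed with the morphism induced by $in$. Both $h_x$ and $in$ are algebra maps, hence $R$-linear, and right multiplication by $x$ is left $R$-linear, so $\varphi$ is indeed a morphism of $\mathcal R$-modules.

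Next, put $\mathbb K:=\Ker\varphi$, the objectwise kernel, which is again a functor of $\mathcal R$-modules. Since kernels of $\mathcal R$-modules are computed objectwise and a morphism $\mathbb F\to\mathbb K$ is exactly a morphism $\mathbb F\to\mathcal T$ annihilated by $\varphi$, the functor $\Hom_{\mathcal R}(\mathbb F,-)$ is left exact, and therefore $({\mathbb F^\vee})^r(M)=\Hom_{\mathcal R}(\mathbb F,\mathbb K)$. It then remains only to identify $\mathbb K$ with $\mathcal M$.

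This identification is where the earlier results do the work. Evaluating at an $R$-algebra $S$ gives $\mathbb K(S)=\Ker[S\otimes_R T\rightrightarrows S\otimes_R T']$, and inspecting the two maps (compare Lemma \ref{reperab}) shows this is precisely $\mathcal S^r(M)$, the value at $M$ of the extension of the quasi-coherent module $\mathcal S$ associated with $S$ regarded as a right $R$-module. Now every $R$-algebra $S$ is an $R$-bimodule through its structural morphism $R\to S$, so Proposition \ref{super} applies with $N=S$ and yields $\mathcal S^r(M)=S\otimes_R M=\mathcal M(S)$. I would then verify that this identification is natural in $S$ and compatible with the $S$-module structures (both sides carry the action of $S$ on the left tensor factor), so that $\mathbb K=\mathcal M$ as functors of $\mathcal R$-modules. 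Combining, $({\mathbb F^\vee})^r(M)=\Hom_{\mathcal R}(\mathbb F,\mathcal M)$. Finally, restricting this identity to the subcategory of $R$-algebras, that is, taking $M=S$, gives $({\mathbb F^\vee})^{ro}(S)=\Hom_{\mathcal R}(\mathbb F,\mathcal S)=\mathbb F^\vee(S)$, so the dual is recovered on algebras, which is the content of the concluding clause.

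The main obstacle I anticipate is bookkeeping rather than depth. One must pin down that the element-$x$ action in Definition \ref{D4.2}, read for the right $\mathcal R$-module $\mathbb F^\vee$, is right multiplication by $x$, so that $\varphi$ is genuinely a morphism of $\mathcal R$-modules and its objectwise kernel $\mathbb K$ is an $\mathcal R$-module to which $\Hom_{\mathcal R}(\mathbb F,-)$ can be applied; and one must match the pair of maps defining $\mathbb K(S)$ with the pair defining $\mathcal S^r(M)$, so that Proposition \ref{super} is literally applicable. Once these identifications are secured, the substantive content, namely that the relevant kernel equals $S\otimes_R M$, is supplied verbatim by Proposition \ref{super}, via the single observation that every algebra is a bimodule.
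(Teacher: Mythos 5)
Your proof is correct and takes essentially the same route as the paper's: the paper also identifies the kernel of $\mathcal R\langle M\rangle \rightrightarrows \mathcal R\langle M\oplus Rx\rangle$ with $\mathcal M$ by combining Lemma \ref{reperab} and Proposition \ref{super} (using that an $R$-algebra $S$ is an $R$-bimodule), and then concludes via the left exactness of $\Hom_{\mathcal R}(\mathbb F,-)$. Your write-up merely makes the same argument's bookkeeping (the morphism $\varphi$, its objectwise kernel, and the naturality in $S$) more explicit.
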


\begin{proof} By Lema \ref{reperab} and Proposition \ref{super}, the sequence of morphisms 
$$\xymatrix @R6pt {S\otimes_R M\ar[r] &  S\otimes_R  R\langle M\rangle \ar@<1ex>[r]
\ar@<-1ex>[r]&  S\otimes_R R\langle M\oplus Rx\rangle\\ m \ar@{|->}[r] & m,\quad p(m) \ar@{|->}[r]<1ex>
\ar@{|->}[r]<-1ex>&
p(mx),\,p(m)x}$$
is exact for any $\,R$-algebra $S$.  that is, if $\mathcal M$, $\mathcal  R\langle M\rangle$ and $\mathcal R\langle M\oplus Rx\rangle$ are the quasi-coherent modules associated with $M,R\langle M\rangle$ and $R\langle M\oplus Rx\rangle$, respectively, then
the sequence of morphisms 
$$\xymatrix{\mathcal M\ar[r] &  \mathcal  R\langle M\rangle \ar@<1ex>[r]
\ar@<-1ex>[r]&  \mathcal  R\langle M\oplus Rx\rangle}$$
is exact. Hence,
${\mathbb F^\vee}^r(M)=\Hom_{\mathcal R}(\mathbb F,\mathcal M).$

\end{proof}
\medskip

\section{Adjoint functor theorem}

Given an $R$-algebra  $\,S\,$,  let $\,\pi_S\colon R\langle S\rangle \to S\,$ be the morphism of $\,R$-algebras $\,s\mapsto s\,$, for any $\,s\in S$.

\begin{definition} Let $\mathbb F$ be an $\R$-module. We have a natural morphism $\pi_{\mathbb F}\colon \mathbb F^{ro}\to \mathbb F$ defined as follows $$\pi_{\mathbb F,S}(m)=\mathbb F(\pi_S)(m),\,\, \text{for any } m\in \mathbb F^{ro}(S)=\mathbb F^{r}(S)\subseteq \mathbb F(R\langle S\rangle).$$
\end{definition}

\begin{proposition}  For any $s\in S$ and  $m\in \mathbb F^{ro}(S)=\mathbb F^{r}(S)\subseteq \mathbb F(R\langle S\rangle)$, 
$$\pi_{\mathbb F,S}(s* m)=s\cdot \mathbb F(\pi_S)(m).$$
(Recall Remark \ref{recall}).
\end{proposition}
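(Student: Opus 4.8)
The plan is to derive the identity from the single defining relation of $\,\mathbb F^r(S)\,$. Recall (Definition \ref{D4.2}) that an element $\,m\in \mathbb F^r(S)\subseteq \mathbb F(R\langle S\rangle)\,$ is characterized by
$$\mathbb F(h_x)(m)=x\cdot \mathbb F(in)(m)\quad\text{in }\mathbb F(R\langle S\oplus xR\rangle).$$
I would transport this relation along a suitably chosen morphism of $R$-algebras, exploiting the fact that the scalar action $\,\mathcal R\times \mathbb F\to\mathbb F\,$ is a morphism of functors of sets: this means that for every morphism of $R$-algebras $\,g\colon A\to B\,$, every $\,a\in A\,$ and every $\,n\in\mathbb F(A)\,$ one has $\,\mathbb F(g)(a\cdot n)=g(a)\cdot\mathbb F(g)(n)\,$. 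This naturality is the engine of the whole argument.

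First I would fix the morphism of $R$-algebras $\,g\colon R\langle S\oplus xR\rangle\to S\,$ associated (via the Lemma of \ref{N3.4}) with the morphism of right $R$-modules $\,S\oplus xR\to S\,$ that is the identity on $\,S\,$ and sends $\,x\,$ to $\,s\,$. Then I would compute the three relevant composites. Since $\,in\,$ is induced by $\,s'\mapsto s'\,$, one gets $\,g\circ in=\pi_S\,$; by construction $\,g(x)=s\,$; and since $\,h_x\,$ is induced by $\,s'\mapsto x\cdot s'\,$, the composite $\,g\circ h_x\,$ sends a generator $\,s'\in S\,$ to $\,g(x)\,g(s')=s\,s'\,$. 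As both $\,g\circ h_x\,$ and $\,\pi_S\circ R\langle s\cdot\rangle\,$ are morphisms of $R$-algebras that agree on the generators $\,S\,$ of $\,R\langle S\rangle\,$, they must coincide, so $\,g\circ h_x=\pi_S\circ R\langle s\cdot\rangle\,$.

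Applying $\,\mathbb F(g)\,$ to the defining relation and using naturality of the scalar action yields
$$\mathbb F(g\circ h_x)(m)=g(x)\cdot \mathbb F(g\circ in)(m),$$
that is, $\,\mathbb F(\pi_S\circ R\langle s\cdot\rangle)(m)=s\cdot \mathbb F(\pi_S)(m)\,$. To finish I would unwind the left-hand side: by Remark \ref{recall} together with the definition of $\,\mathbb F^r\,$ on morphisms, $\,s*m=\mathbb F^r(s\cdot)(m)=\mathbb F(R\langle s\cdot\rangle)(m)\,$, so that $\,\mathbb F(\pi_S\circ R\langle s\cdot\rangle)(m)=\mathbb F(\pi_S)(s*m)=\pi_{\mathbb F,S}(s*m)\,$. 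This is precisely the claimed equality $\,\pi_{\mathbb F,S}(s*m)=s\cdot\mathbb F(\pi_S)(m)\,$.

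The step I expect to require the most care is the choice of the auxiliary morphism $\,g\,$ and the verification that its composites with $\,in\,$ and $\,h_x\,$ reproduce exactly $\,\pi_S\,$ and $\,\pi_S\circ R\langle s\cdot\rangle\,$; the multiplication by the new generator $\,x\,$ that is built into $\,h_x\,$ is precisely the mechanism that produces the scalar $\,s=g(x)\,$ on the right-hand side. Once $\,g\,$ is correctly set up, everything else is a formal consequence of the naturality of $\,\mathcal R\times\mathbb F\to\mathbb F\,$ and of the identification $\,s*m=\mathbb F(R\langle s\cdot\rangle)(m)\,$, and no additional hypothesis on $\,\mathbb F\,$ is needed.
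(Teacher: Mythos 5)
Your proof is correct and follows essentially the same route as the paper's: both start from the defining relation $\mathbb F(h_x)(m)=x\cdot\mathbb F(in)(m)$, transport it along the algebra morphism determined by $x\mapsto s$ using naturality of the scalar action, and conclude via the identification $s*m=\mathbb F(R\langle s\cdot\rangle)(m)$ together with the agreement of $\pi_S\circ R\langle s\cdot\rangle$ with the relevant composite on generators. The only cosmetic difference is that the paper performs this in two stages (first applying $\mathbb F$ of the morphism $R\langle S\oplus xR\rangle\to R\langle S\rangle$, $x\mapsto s$, then $\mathbb F(\pi_S)$), whereas you compose these into the single morphism $g\colon R\langle S\oplus xR\rangle\to S$.
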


\begin{proof} Given $m\in {\mathbb F}^r(S)$, we know that $\mathbb F(h_x)(m)-x\cdot \mathbb F(in)(m)=0$, by Definition \ref{D4.2}. Let $h_s\colon R\langle S\rangle\to R\langle S\rangle$ be defined by
$h_s(s')=s\cdot s'\in S\cdot S\subseteq R\langle S\rangle$.
Consider the morphism of $R$-algebras $R\langle S\oplus Rx\rangle \overset{x=s}\to R\langle S\rangle$, $s'\mapsto s'$ and $x\mapsto s$. We have the commutative diagrams
$$\xymatrix{R\langle S\rangle \ar[rd]^-{h_s} \ar[r]^-{h_x} & R\langle S\oplus xR\rangle  \ar[d]^-{x=s }\\ &  R\langle S\rangle }\quad 
\xymatrix{R\langle S\rangle \ar[r]^-{in}  \ar[dr]^-{Id} & R\langle S\oplus xR\rangle \ar[d]^-{x=s }\\  & R\langle S\rangle }
$$
Then, 
$$0  =\mathbb F(x=s)(\mathbb F(h_x)(m)-x\cdot \mathbb F(in)(m))=\mathbb F(h_s)(m)-s\cdot m.$$
Observe that $\pi_S\circ h_s=\pi_S\circ R\langle s\cdot \rangle$. Then,
$$\aligned 0 & =\mathbb F(\pi_S)(\mathbb F(h_s)(m)-s\cdot m)=\mathbb F(\pi_S)( \mathbb F(R\langle s\cdot \rangle)(m)-s\cdot m)\\  &  =\mathbb F(\pi_S)( s*m-s\cdot m)=
\mathbb F(\pi_S)( s*m)-s\cdot \mathbb F(\pi_S)(m)\\  & =\pi_{\mathbb F,S}(s*m)-s\cdot \pi_{\mathbb F,S}(m).\endaligned
$$

\end{proof}

\begin{proposition} \label{PB} Let $\phi\colon \mathbb F\to \mathbb F' $ be a morphism of $\R$-modules. The diagram
$$\xymatrix{\mathbb F^{ro} \ar[r]^-{\phi^{ro}} \ar[d]^-{\pi_{\mathbb F}} & \mathbb F'^{ro} \ar[d]^-{\pi_{\mathbb F'}} \\ \mathbb F \ar[r]_-\phi & \mathbb F'}$$
is commutative.
\end{proposition}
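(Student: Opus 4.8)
The plan is to verify the square objectwise. I would fix an $R$-algebra $S$ and an element $m\in \mathbb F^{ro}(S)=\mathbb F^r(S)\subseteq \mathbb F(R\langle S\rangle)$, and show that the two composites around the diagram agree on $m$.

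First I would pin down the map $\phi^{ro}_S$. Since $\phi$ is a morphism of $\mathcal R$-modules, for every $R$-module $M$ the component $\phi_{R\langle M\rangle}\colon \mathbb F(R\langle M\rangle)\to \mathbb F'(R\langle M\rangle)$ is compatible with the structure maps $\mathbb F(h_x)$, $\mathbb F(in)$ and with multiplication by $x$; hence it carries the kernel $\mathbb F^r(M)$ into $\mathbb F'^r(M)$, and this restriction is precisely $\phi^r_M$. Taking $M=S$, the map $\phi^{ro}_S$ is simply the restriction of $\phi_{R\langle S\rangle}$ to $\mathbb F^r(S)$.

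With this in hand I would compute the two composites. Going right then down sends
$$m\longmapsto \phi^{ro}_S(m)=\phi_{R\langle S\rangle}(m)\longmapsto \pi_{\mathbb F',S}\big(\phi^{ro}_S(m)\big)=\mathbb F'(\pi_S)\big(\phi_{R\langle S\rangle}(m)\big),$$
whereas going down then right sends
$$m\longmapsto \pi_{\mathbb F,S}(m)=\mathbb F(\pi_S)(m)\longmapsto \phi_S\big(\mathbb F(\pi_S)(m)\big).$$
The equality of these two elements is exactly the naturality square of the morphism of functors $\phi$ evaluated at the $R$-algebra morphism $\pi_S\colon R\langle S\rangle\to S$, namely $\mathbb F'(\pi_S)\circ \phi_{R\langle S\rangle}=\phi_S\circ \mathbb F(\pi_S)$, applied to $m$. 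This would prove the claim.

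The only point demanding attention will be the well-definedness of $\phi^r$, i.e. that $\phi_{R\langle S\rangle}$ genuinely maps $\mathbb F^r(S)$ into $\mathbb F'^r(S)$; this is where the hypothesis that $\phi$ is a morphism of $\mathcal R$-modules (rather than merely of additive functors of abelian groups) is used, through its compatibility with multiplication by $x$. Once that is granted, the statement reduces to the naturality of $\phi$ and requires no further calculation.
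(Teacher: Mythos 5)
Your proof is correct and is essentially the same as the paper's: the paper's argument is precisely the commutativity of the diagram whose top square says $\phi^{ro}_S$ is the restriction of $\phi_{R\langle S\rangle}$ to $\mathbb F^{ro}(S)\subseteq \mathbb F(R\langle S\rangle)$, and whose bottom square is the naturality of $\phi$ at $\pi_S\colon R\langle S\rangle\to S$. Your explicit check that $\phi_{R\langle S\rangle}$ carries the kernel $\mathbb F^r(S)$ into $\mathbb F'^r(S)$ (using compatibility with multiplication by $x$) is a point the paper leaves implicit, and it is a worthwhile addition.
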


\begin{proof} The diagram
$$\xymatrix{\mathbb F^{ro}(S) \ar[r]^-{\phi^{ro}_S} \ar@/^-2.0pc/[dd]_-{\pi_{\mathbb F,S}} \ar@{^{(}->}[d] & \mathbb F'^{ro}(S) \ar@/^2.0pc/[dd]^-{\pi_{\mathbb F',S}} \ar@{^{(}->}[d]
\\ \mathbb F (R\langle S\rangle) \ar[r]_-{\phi_{R\langle S\rangle}} \ar[d]^-{\mathbb F(\pi_S)} & \mathbb F'(R\langle S\rangle ) \ar[d]_-{\mathbb F'(\pi_S)}
\\ \mathbb F (S) \ar[r]_-{\phi_S} & \mathbb F'(S)
}$$
is commutative.
\end{proof}

Given a right $R$-module  $\,N$,  let $\, i_N\colon N\to R\langle N\rangle \,$ be the morphism of $\,R$-modules $\,n\mapsto n\,$, for any $\,n\in N$.

\begin{definition} Let $\mathbb G$ be a functor of abelian groups. We have a natural morphism $i_{\mathbb G}\colon \mathbb G\to \mathbb G^{or}$ defined as follows:
$$i_{\mathbb G,N}(g):=\mathbb G(i_N)(g),\text{ for any } g\in \mathbb G(N)$$
Let us check that $\mathbb G(i_N)(g)\in \mathbb G^{or}(N)\subset \mathbb G^o(R\langle N\rangle)=\mathbb G(R\langle N\rangle)$: The composite morphism $$N\overset{i_N}\to R\langle N\rangle \overset{h_x-x\cdot in}\longrightarrow R\langle N\oplus xR\rangle$$
is zero. Hence, $$\aligned 0 & =\mathbb G(h_x-x\cdot in)(\mathbb G(i_N)(g))=
(\mathbb G(h_x)- \mathbb G(x\cdot in))(\mathbb G(i_N)(g))
\\ & = (\mathbb G^o(h_x)-x\cdot \mathbb G^o(in))(\mathbb G(i_N)(g))\endaligned$$
and $\mathbb G(i_N)(g)\in \mathbb G^{or}(N)$.
\end{definition}

\begin{proposition} \label{PA} Let $\phi\colon \mathbb G\to \mathbb G'$ be a morphism of functors of groups. The diagram
$$\xymatrix{\mathbb G\ar[r]^-\phi \ar[d]^-{i_{\mathbb G}} & \mathbb G' \ar[d]^-{i_{\mathbb G'}} \\ \mathbb G^{or} \ar[r]_-{\phi^{or}} & \mathbb G'^{or}}$$
is commutative.
\end{proposition}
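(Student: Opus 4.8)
The plan is to check commutativity pointwise, verifying at each right $R$-module $N$ the identity $\phi^{or}_N\circ i_{\mathbb G,N}=i_{\mathbb G',N}\circ\phi_N$ of maps $\mathbb G(N)\to\mathbb G'^{or}(N)$. First I would unwind the definitions. By construction $i_{\mathbb G,N}(g)=\mathbb G(i_N)(g)$, regarded inside $\mathbb G^o(R\langle N\rangle)=\mathbb G(R\langle N\rangle)$ and lying in the subgroup $\mathbb G^{or}(N)$ (as checked in the definition of $i_{\mathbb G}$), and likewise $i_{\mathbb G',N}(g')=\mathbb G'(i_N)(g')$ inside $\mathbb G'(R\langle N\rangle)$. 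Since $\phi^o$ is a morphism of $\mathcal R$-modules (as recorded above, $\phi^o_S(s* g)=s*\phi^o_S(g)$), the extension $\phi^{or}=(\phi^o)^r$ is defined, and its component $\phi^{or}_N$ is, exactly as in the dual Proposition \ref{PB}, the restriction to $\mathbb G^{or}(N)$ of $(\phi^o)_{R\langle N\rangle}=\phi_{R\langle N\rangle}$.

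With these identifications the two composites become $\phi^{or}_N(i_{\mathbb G,N}(g))=\phi_{R\langle N\rangle}(\mathbb G(i_N)(g))$ and $i_{\mathbb G',N}(\phi_N(g))=\mathbb G'(i_N)(\phi_N(g))$, so the claim collapses to the single equality $\phi_{R\langle N\rangle}\circ\mathbb G(i_N)=\mathbb G'(i_N)\circ\phi_N$. This is nothing but the naturality square of the morphism of functors $\phi\colon\mathbb G\to\mathbb G'$ evaluated on the $R$-module map $i_N\colon N\to R\langle N\rangle$, i.e. the commutative square
$$\xymatrix{
\mathbb G(N)\ar[r]^-{\phi_N}\ar[d]_-{\mathbb G(i_N)} & \mathbb G'(N)\ar[d]^-{\mathbb G'(i_N)}\\
\mathbb G(R\langle N\rangle)\ar[r]_-{\phi_{R\langle N\rangle}} & \mathbb G'(R\langle N\rangle)}$$
whose vertical arrows corestrict to $i_{\mathbb G,N}$ and $i_{\mathbb G',N}$. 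Thus the proof is a one-line invocation of the functoriality of $\phi$, mirroring the argument for Proposition \ref{PB}.

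I expect no genuine obstacle here; the only point deserving a word of care is the bookkeeping identification of $\phi^{or}_N$ with the restriction of $\phi_{R\langle N\rangle}$ to the kernel subgroups defining $\mathbb G^{or}$ and $\mathbb G'^{or}$. This identification is legitimate precisely because $\phi^o$ respects the $\mathcal R$-module structures: combined with the naturality of $\phi$ with respect to $h_x$ and $in$, this forces $\phi_{R\langle N\rangle}$ to carry $\Ker[\mathbb G^o(h_x)-x\cdot\mathbb G^o(in)]$ into $\Ker[\mathbb G'^o(h_x)-x\cdot\mathbb G'^o(in)]$, that is, to restrict to a map $\mathbb G^{or}(N)\to\mathbb G'^{or}(N)$. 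Once this is in place, evaluating both composites on a general $g\in\mathbb G(N)$ and appealing to the displayed square finishes the argument.
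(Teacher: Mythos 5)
Your proof is correct and takes essentially the same route as the paper: the paper's proof consists precisely of the stacked commutative diagram that identifies $\phi^{or}_N$ with the restriction of $\phi_{R\langle N\rangle}$ to $\mathbb G^{or}(N)\subseteq \mathbb G(R\langle N\rangle)$ and reduces the claim to the naturality square of $\phi$ at $i_N\colon N\to R\langle N\rangle$. Your closing remark on why $\phi_{R\langle N\rangle}$ carries one kernel subgroup into the other is exactly the bookkeeping the paper leaves implicit in its hooked inclusion arrows.
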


\begin{proof} The diagram 
$$\xymatrix @R=10pt {\mathbb G(N) \ar[r]^-{\phi_N} \ar[d]^-{i_{\mathbb G,N}} & \mathbb G'(N) \ar[d]^-{i_{\mathbb G',N}} \\ \mathbb G^{or}(N) \ar@{-->}[r]_-{\phi^{or}_N} \ar@{^{(}->}[d] & \mathbb G'^{or}(N) \ar@{^{(}->}[d] \\
 \mathbb G^r(R\langle N\rangle) \ar[r]^-{\phi^r_{R\langle N\rangle}}  \ar@{=}[d] & \mathbb G'^r(R\langle N\rangle) \ar@{=}[d]
\\ \mathbb G(R\langle N\rangle) \ar[r]^-{\phi_{R\langle N\rangle}}  & \mathbb G'(R\langle N\rangle)}
$$
is commutative.

\end{proof}

\begin{lemma} \label{T2}  Let $\mathbb G$ be a functor of abelian groups.
The composite morphism
$$\mathbb G^o\overset{i_{\mathbb G}^o}\longrightarrow \mathbb G^{oro} \overset{\pi_{\mathbb G^o}}\longrightarrow \mathbb G^o$$
is the identity morphism.
\end{lemma}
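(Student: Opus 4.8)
The plan is to verify the statement pointwise, since a morphism of functors is the identity precisely when all of its components are. So I fix an $R$-algebra $S$ and an element $g\in \mathbb G^o(S)=\mathbb G(S)$, and track the image of $g$ under the two successive maps, keeping in mind that the functor $(-)^o$ leaves objects and morphisms untouched: $\mathbb H^o(S)=\mathbb H(S)$ and $\mathbb H^o(w)=\mathbb H(w)$ for every functor $\mathbb H$ of abelian groups and every morphism $w$, and $\phi^o_S=\phi_S$ for every morphism $\phi$. In particular $\mathbb G^{oro}(S)=\mathbb G^{or}(S)\subseteq \mathbb G(R\langle S\rangle)$ and $\mathbb G^o(\pi_S)=\mathbb G(\pi_S)$, so the two superscripted functors evaluated at $S$ and at $R\langle S\rangle$ are literally the groups $\mathbb G(S)$ and (a subgroup of) $\mathbb G(R\langle S\rangle)$.

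First I would apply $i_{\mathbb G}^o$. By definition $(i_{\mathbb G}^o)_S=i_{\mathbb G,S}$, hence $(i_{\mathbb G}^o)_S(g)=\mathbb G(i_S)(g)$, where $i_S\colon S\to R\langle S\rangle$ is the inclusion $s\mapsto s$; the construction of $i_{\mathbb G}$ already guarantees that this element lies in $\mathbb G^{or}(S)$, so it is a legitimate argument for $\pi_{\mathbb G^o,S}$. Applying the latter and using $\mathbb G^o(\pi_S)=\mathbb G(\pi_S)$ together with the functoriality of $\mathbb G$ gives
\[
\pi_{\mathbb G^o,S}\bigl(\mathbb G(i_S)(g)\bigr)=\mathbb G(\pi_S)\bigl(\mathbb G(i_S)(g)\bigr)=\mathbb G(\pi_S\circ i_S)(g).
\]
The decisive observation is then that $\pi_S\circ i_S\colon S\to R\langle S\rangle\to S$ sends $s\mapsto s\mapsto s$, i.e. it is the identity of $S$ as a right $R$-module; hence $\mathbb G(\pi_S\circ i_S)=\Id_{\mathbb G(S)}$ and the composite sends $g$ back to $g$.

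Since this holds for every $S$ and every $g$, the composite $\pi_{\mathbb G^o}\circ i_{\mathbb G}^o$ is the identity of $\mathbb G^o$. Conceptually this is nothing but one of the two triangle identities for the adjunction $(-)^o\dashv(-)^r$ behind Theorem \ref{T4}, with $i$ as the unit and $\pi$ as the counit; I would mention this to motivate the computation, but the direct verification above is self-contained. The only point requiring care — and the nearest thing to an obstacle — is the bookkeeping: one must consistently exploit that $(-)^o$ acts as the identity on underlying objects and morphisms, and that the two canonical maps $i_S$ and $\pi_S$ compose to $\Id_S$, after which the functoriality of $\mathbb G$ does all the work.
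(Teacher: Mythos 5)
Your proof is correct and coincides with the paper's own argument: the paper establishes the same identity via a commutative diagram, whose content is exactly your element-wise computation $\pi_{\mathbb G^o,S}\circ i^o_{\mathbb G,S}=\mathbb G(\pi_S)\circ\mathbb G(i_S)=\mathbb G(\pi_S\circ i_S)=\Id$, using that $\pi_S\circ i_S=\Id_S$. The bookkeeping points you flag (that $(-)^o$ is the identity on underlying groups, and that $i_{\mathbb G,S}(g)$ lands in $\mathbb G^{or}(S)$) are precisely what the paper's diagram encodes, so there is no gap.
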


\begin{proof} The diagram
$$\xymatrix{\mathbb G^o(S) \ar@{=}[d] \ar[r]^-{i^o_{\mathbb G,S}} &
\mathbb G^{oro}(S) \ar@{=}[d] \ar[r]^-{\pi_{\mathbb G^o,S}} &
\mathbb G^o(S) \ar@{=}[r] & \mathbb G(S)\\
\mathbb G(S) \ar[r]^-{i_{\mathbb G,S}}  \ar@/^-2.0pc/[rrr]_-{\mathbb G(i_S)}&
\mathbb G^{or}(S)  \ar@{^{(}->}[r] &
\mathbb G^o(R\langle S\rangle) \ar@{=}[r] \ar[u]^-{\mathbb G^o(\pi_S)} & \mathbb G(R\langle S\rangle) \ar[u]^-{\mathbb G(\pi_S)}
}$$
is commutative. Hence, $\pi_{\mathbb G^o,S}\circ i_{\mathbb G,S}^o=
\mathbb G(\pi_S)\circ \mathbb G(i_S)=\mathbb G(\pi_S\circ i_S)=Id$.
\end{proof}

\begin{lemma} \label{T1} Let $\mathbb F$ be an $\R$-module. The composite morphism
$$\mathbb F^r\overset{i_{\mathbb F^r}}\longrightarrow \mathbb F^{ror} \overset{\pi_{\mathbb F}^r} \longrightarrow \mathbb F^r$$
is the identity morphism.
\end{lemma}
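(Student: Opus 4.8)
The plan is to verify the identity pointwise: I would evaluate both functors at an arbitrary right $R$-module $M$ and run a diagram chase strictly parallel to the one used for Lemma~\ref{T2}, with $R\langle M\rangle$ now playing the role that $S$ played there, $\pi_{R\langle M\rangle}$ the role of $\pi_S$, and $R\langle i_M\rangle$ the role of $i_S$. The whole computation takes place inside $\mathbb F(R\langle R\langle M\rangle\rangle)$, where both $\mathbb F^r(M)$ and the intermediate group $\mathbb F^{ror}(M)$ sit as subgroups of kernels.

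First I would unwind the two maps. Since $\mathbb G:=\mathbb F^r$ is a functor of abelian groups, the definition of $i_{\mathbb G}$ gives, for $f\in\mathbb F^r(M)\subseteq\mathbb F(R\langle M\rangle)$,
$$i_{\mathbb F^r,M}(f)=\mathbb F^r(i_M)(f)=\mathbb F(R\langle i_M\rangle)(f),$$
an element of $\mathbb F^r(R\langle M\rangle)=\mathbb F^{ro}(R\langle M\rangle)$ that lies in the subgroup $\mathbb F^{ror}(M)\subseteq\mathbb F(R\langle R\langle M\rangle\rangle)$. Next, because $\pi_{\mathbb F}\colon\mathbb F^{ro}\to\mathbb F$ is a morphism of $\mathcal R$-modules (established above), the extension $(-)^r$ applies to it, and $(\pi_{\mathbb F}^r)_M$ is merely the restriction to kernels of $(\pi_{\mathbb F})_{R\langle M\rangle}=\mathbb F(\pi_{R\langle M\rangle})$. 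Hence the composite sends $f$ to
$$\mathbb F(\pi_{R\langle M\rangle})\bigl(\mathbb F(R\langle i_M\rangle)(f)\bigr)=\mathbb F\bigl(\pi_{R\langle M\rangle}\circ R\langle i_M\rangle\bigr)(f).$$

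The crux is then the purely algebraic identity $\pi_{R\langle M\rangle}\circ R\langle i_M\rangle=\Id_{R\langle M\rangle}$. As both sides are morphisms of $R$-algebras out of $R\langle M\rangle$, it suffices to compare them on the generators $m\in M$: the map $R\langle i_M\rangle$ sends $m$ to the generator of $R\langle R\langle M\rangle\rangle$ indexed by $i_M(m)=m\in R\langle M\rangle$, which $\pi_{R\langle M\rangle}$ sends back to $m\in R\langle M\rangle$; so the composite fixes every generator and is the identity. Consequently the bottom row of
$$\xymatrix{\mathbb F^r(M) \ar[r]^-{i_{\mathbb F^r,M}} \ar@{^{(}->}[d] & \mathbb F^{ror}(M) \ar[r]^-{(\pi_{\mathbb F}^r)_M} \ar@{^{(}->}[d] & \mathbb F^r(M) \ar@{^{(}->}[d] \\ \mathbb F(R\langle M\rangle) \ar[r]^-{\mathbb F(R\langle i_M\rangle)} & \mathbb F(R\langle R\langle M\rangle\rangle) \ar[r]^-{\mathbb F(\pi_{R\langle M\rangle})} & \mathbb F(R\langle M\rangle)}$$
composes to $\mathbb F(\Id_{R\langle M\rangle})=\Id$, and its squares commute by the two unwindings above; since $\pi_{\mathbb F}^r\circ i_{\mathbb F^r}$ is the restriction of this bottom composite to the left-hand kernel, it is the identity on $\mathbb F^r(M)$.

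I do not expect any genuine obstacle here: the only real verification is the one-line generator computation of $\pi_{R\langle M\rangle}\circ R\langle i_M\rangle=\Id$ (the exact dual of $\pi_S\circ i_S=\Id$ used in Lemma~\ref{T2}), and everything else is bookkeeping with the definitions of $(-)^o$, $(-)^r$, $i_{(-)}$ and $\pi_{(-)}$. This is simply the second triangle identity of the adjunction whose first identity is Lemma~\ref{T2}.
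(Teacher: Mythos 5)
Your proof is correct and is essentially the paper's own argument: both evaluate at a right $R$-module, unwind $i_{\mathbb F^r}$ and $\pi_{\mathbb F}^r$ as restrictions of $\mathbb F(R\langle i_M\rangle)$ and $\mathbb F(\pi_{R\langle M\rangle})$ inside $\mathbb F(R\langle R\langle M\rangle\rangle)$, and conclude from the identity $\pi_{R\langle M\rangle}\circ R\langle i_M\rangle=\Id_{R\langle M\rangle}$. The only difference is cosmetic: you make the generator-level verification of that identity explicit, which the paper leaves as an unstated one-liner.
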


\begin{proof} The diagram
$$\xymatrix{\mathbb F^r(N) \ar[d]_-{i_{\mathbb F^r,N}} \ar@{^{(}->}[rrr]  \ar[rrd]^-{\mathbb F^r(i_N)} & & & \mathbb F(R\langle N\rangle) \ar[d]^-{\mathbb F({R\langle i_N\rangle})} \\
\mathbb F^{ror}(N) \ar@{^{(}->}[r] \ar[d]_-{\pi^r_{\mathbb F,N}} &
\mathbb F^{ro}(R\langle N\rangle) \ar@{=}[r] \ar[d]_-{\pi_{\mathbb F,R\langle N\rangle}} & \mathbb F^{r}(R\langle N\rangle)
\ar@{^{(}->}[r] & \mathbb F(R\langle R\langle N\rangle\rangle)
\ar[dll]^-{\mathbb F(\pi_{R\langle N\rangle})}\\ \mathbb F^{r}(N) 
\ar@{^{(}->}[r] & \mathbb F(R\langle N\rangle)
}$$
is commutative. Then, $\pi^r_{\mathbb F,N}\circ i_{\mathbb F^r,N}=Id$
since $$\mathbb F(\pi_{R\langle N\rangle})\circ \mathbb F(i_{R\langle N\rangle})=\mathbb F(\pi_{R\langle N\rangle}\circ {R\langle i_N\rangle})=\mathbb F(Id)=Id.$$ Hence, $\pi^r_{\mathbb F}\circ i_{\mathbb F^r}=Id$.

\end{proof}

\begin{theorem} \label{T3} Let $\mathbb F$ be an $\R$-module and $\mathbb G$ an additive functor of abelian groups.  Then, we have the functorial isomorphism
$$\xymatrix @R=8pt { \Hom_{grp}(\mathbb G,\mathbb F^r) \ar@{=}[r] & 
\Hom_{\R}(\mathbb G^o,\mathbb F)\\ \phi \ar@{|->}[r]  & \pi_{\mathbb F}\circ\phi^o\\\varphi^r\circ i_{\mathbb G} & \varphi \ar@{|->}[l]}$$
\end{theorem}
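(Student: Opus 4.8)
The plan is to read the statement as the unit--counit formulation of an adjunction and to prove that the two displayed assignments
$$\Phi(\phi):=\pi_{\mathbb F}\circ\phi^o,\qquad \Psi(\varphi):=\varphi^r\circ i_{\mathbb G},$$
are mutually inverse by a formal diagram chase. In effect one is exhibiting $(-)^o$ and $(-)^r$ as an adjoint pair, with unit $i_{\mathbb G}\colon\mathbb G\to\mathbb G^{or}$ and counit $\pi_{\mathbb F}\colon\mathbb F^{ro}\to\mathbb F$; the only inputs needed are the naturality of the unit (Proposition \ref{PA}) and of the counit (Proposition \ref{PB}), together with the two triangle identities of Lemmas \ref{T2} and \ref{T1}.

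First I would verify that the two maps are well defined. For $\Phi$, the morphism $\phi^o$ is a morphism of $\R$-modules (by the computation following Remark \ref{recall}) and $\pi_{\mathbb F}$ is $\R$-linear (as shown by the Proposition immediately following its definition), so $\pi_{\mathbb F}\circ\phi^o\in\Hom_{\R}(\mathbb G^o,\mathbb F)$. For $\Psi$, both $i_{\mathbb G}$ and the extension $\varphi^r$ (formed as in Section \ref{SectExt}) are morphisms of functors of abelian groups, so $\varphi^r\circ i_{\mathbb G}\in\Hom_{grp}(\mathbb G,\mathbb F^r)$. Functoriality of the claimed isomorphism in $\mathbb G$ and $\mathbb F$ is then immediate, since both formulas are built from natural transformations.

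The core of the proof is the two relations $\Psi\circ\Phi=\Id$ and $\Phi\circ\Psi=\Id$. For the first, given $\phi\colon\mathbb G\to\mathbb F^r$, functoriality of the extension $(-)^r$ gives
$$\Psi(\Phi(\phi))=(\pi_{\mathbb F}\circ\phi^o)^r\circ i_{\mathbb G}=\pi_{\mathbb F}^r\circ\phi^{or}\circ i_{\mathbb G}.$$
Applying the naturality square of $i$ (Proposition \ref{PA}) to $\phi$ rewrites $\phi^{or}\circ i_{\mathbb G}$ as $i_{\mathbb F^r}\circ\phi$, and then the triangle identity $\pi_{\mathbb F}^r\circ i_{\mathbb F^r}=\Id$ of Lemma \ref{T1} collapses the expression to $\phi$. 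For the second, given $\varphi\colon\mathbb G^o\to\mathbb F$, functoriality of $(-)^o$ gives
$$\Phi(\Psi(\varphi))=\pi_{\mathbb F}\circ(\varphi^r\circ i_{\mathbb G})^o=\pi_{\mathbb F}\circ\varphi^{ro}\circ i_{\mathbb G}^o.$$
Applying the naturality square of $\pi$ (Proposition \ref{PB}) to $\varphi$ rewrites $\pi_{\mathbb F}\circ\varphi^{ro}$ as $\varphi\circ\pi_{\mathbb G^o}$, and then the triangle identity $\pi_{\mathbb G^o}\circ i_{\mathbb G}^o=\Id$ of Lemma \ref{T2} yields $\varphi$.

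I do not expect a genuine obstacle here: everything that is not purely formal --- in particular the two triangle identities --- has already been established in Lemmas \ref{T2} and \ref{T1}. The only point requiring care is the bookkeeping, namely keeping straight the four composite operations $(-)^o$, $(-)^r$, $(-)^{or}$, $(-)^{ro}$ and applying each naturality square and each triangle identity to the correct morphism at the correct object, so that the two chases close up exactly as above.
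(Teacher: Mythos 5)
Your proof is correct and is essentially identical to the paper's: both verify that $\phi\mapsto\pi_{\mathbb F}\circ\phi^o$ and $\varphi\mapsto\varphi^r\circ i_{\mathbb G}$ are mutually inverse via the naturality squares of Propositions \ref{PA} and \ref{PB} and the triangle identities of Lemmas \ref{T1} and \ref{T2}. Your explicit framing as a unit--counit adjunction and the well-definedness check are welcome clarifications, but the underlying argument is the same.
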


\begin{proof} It is a check:
$$\aligned & \phi \mapsto \pi_{\mathbb F}\circ \phi^o\mapsto \pi^r_{\mathbb F}\circ \phi^{or}\circ i_{\mathbb G} \overset{\text{\ref{PA}}}= \pi^r_{\mathbb F}\circ i_{\mathbb F^r}\circ \phi\overset{\text{\ref{T1}}}=\phi\\
& \varphi \mapsto \varphi^r\circ i_{\mathbb G}\mapsto \pi_{\mathbb F}\circ \varphi^{ro}\circ i^o_{\mathbb G} \overset{\text{\ref{PB}}}
=\varphi\circ \pi_{\mathbb G^o}\circ i^o_{\mathbb G}\overset{\text{\ref{T2}}}=\varphi\endaligned$$
\end{proof}

\begin{corollary} \label{L5.111} Let $\,\mathbb F\,$ be an $\,\mathcal R$-module such that 
$\pi_{\mathbb F}\colon \mathbb F^{ro}\to \mathbb F$ is an isomorphism. 
If $\,\mathbb F'\,$ is another $\,\mathcal{R}$-module, the natural morphism
$$\Hom_{\mathcal R}(\mathbb F,\mathbb F')\to \Hom_{grp}({\mathbb F}^r , {\mathbb F'}^r),\, f\mapsto f^r.$$
\end{corollary}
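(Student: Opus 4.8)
The plan is to obtain the statement as a direct specialization of Theorem \ref{T3}, taking the additive functor of abelian groups there to be $\mathbb G=\mathbb F^r$ itself and then transporting along the isomorphism $\pi_{\mathbb F}$; the assertion to be proved is that the natural morphism $f\mapsto f^r$ is an isomorphism. First I would apply Theorem \ref{T3} with $\mathbb G=\mathbb F^r$ and with the role of its ``$\mathbb F$'' played by $\mathbb F'$. (This is legitimate because the very hypothesis of the corollary, that $\pi_{\mathbb F}\colon \mathbb F^{ro}\to\mathbb F$ be an isomorphism of $\mathcal R$-modules, presupposes that $\mathbb F^{ro}=(\mathbb F^r)^o$ is an $\mathcal R$-module, i.e. that $\mathbb F^r$ is additive.) This produces a functorial isomorphism
$$\Hom_{grp}(\mathbb F^r,\mathbb F'^r)\;\xrightarrow{\ \sim\ }\;\Hom_{\mathcal R}((\mathbb F^r)^o,\mathbb F')=\Hom_{\mathcal R}(\mathbb F^{ro},\mathbb F'),$$
whose inverse sends $\varphi\in\Hom_{\mathcal R}(\mathbb F^{ro},\mathbb F')$ to $\varphi^r\circ i_{\mathbb F^r}$.

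Next, since $\pi_{\mathbb F}\colon\mathbb F^{ro}\to\mathbb F$ is an isomorphism of $\mathcal R$-modules, precomposition with it gives a second isomorphism
$$\Hom_{\mathcal R}(\mathbb F,\mathbb F')\;\xrightarrow{\ \sim\ }\;\Hom_{\mathcal R}(\mathbb F^{ro},\mathbb F'),\qquad f\mapsto f\circ\pi_{\mathbb F}.$$
Composing this with the inverse of the first isomorphism yields an isomorphism $\Hom_{\mathcal R}(\mathbb F,\mathbb F')\xrightarrow{\sim}\Hom_{grp}(\mathbb F^r,\mathbb F'^r)$, and it remains only to check that this composite is the natural map $f\mapsto f^r$.

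Unwinding the two arrows, $f$ is sent first to $f\circ\pi_{\mathbb F}$ and then to $(f\circ\pi_{\mathbb F})^r\circ i_{\mathbb F^r}$. By functoriality of the extension $(-)^r$ with respect to composition of morphisms of $\mathcal R$-modules this equals $f^r\circ\pi_{\mathbb F}^r\circ i_{\mathbb F^r}$, and Lemma \ref{T1} gives $\pi_{\mathbb F}^r\circ i_{\mathbb F^r}=\Id_{\mathbb F^r}$, so the composite is exactly $f^r$; being a composite of two isomorphisms it is itself an isomorphism. I expect no serious difficulty in this argument, the whole content residing in Theorem \ref{T3}; the only point demanding care is this last piece of bookkeeping, namely the functoriality identity $(f\circ\pi_{\mathbb F})^r=f^r\circ\pi_{\mathbb F}^r$ and the correct matching of sources and targets when invoking Lemma \ref{T1}.
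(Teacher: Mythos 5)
Your proposal is correct and takes essentially the same route as the paper, whose proof is exactly the chain $\Hom_{\mathcal R}(\mathbb F,\mathbb F')=\Hom_{\mathcal R}(\mathbb F^{ro},\mathbb F')\overset{\text{\ref{T3}}}{=}\Hom_{grp}(\mathbb F^r,\mathbb F'^r)$, i.e.\ transport along $\pi_{\mathbb F}$ followed by Theorem \ref{T3} applied with $\mathbb G=\mathbb F^r$. The only difference is that you make explicit the bookkeeping the paper leaves implicit, namely that the composite identification is the map $f\mapsto f^r$, via $(f\circ\pi_{\mathbb F})^r\circ i_{\mathbb F^r}=f^r\circ\pi_{\mathbb F}^r\circ i_{\mathbb F^r}=f^r$ by functoriality of $(-)^r$ and Lemma \ref{T1}.
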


\begin{proof} $\Hom_{\mathcal R}(\mathbb F,\mathbb F')
=
\Hom_{\mathcal R}({\mathbb F}^{ro} , {\mathbb F'})
\overset{\text{\ref{T3}}}=\Hom_{grp}({\mathbb F}^r , {\mathbb F'}^r)$.
\end{proof}
\medskip

\section{Reflexivity theorem}\label{lafinal}

Let $\,M\,$ be an $\,R-$module. The functor $\,{\mathcal M^\vee}^r\,$ is precisely the functor of (co)points of $\,M\,$ in the category of $\,R-$modules: if $\,Q\,$ is another $\,R-$module, in virtue of Proposition \ref{1.30}:
$${\mathcal{M}^\vee}^r(Q) = \Hom_{\mathcal{R}} (\mathcal{M} , \mathcal{Q}) = \Hom_{R}(M , Q) \ . $$

\begin{lemma} \label{Yoneda} Let $\,M\,$ be a right $\,R$-module and $\,\mathbb G\,$ an aditive functor of abelian groups. 
Then, $${\Hom}_{grp} (\mathcal M^{\vee r}, \mathbb G)=\mathbb G(M)\, .$$
\end{lemma}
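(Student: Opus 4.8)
The plan is to recognise that $\,\mathcal M^{\vee r}\,$ is nothing but the representable functor $\,\Hom_R(M,-)\,$ on the category of right $R$-modules, and then to invoke the Yoneda lemma, taking care of the additive structure.

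First I would identify the functor. By the right-module analogue of Proposition \ref{1.30} together with Proposition \ref{tercer}, for every right $R$-module $N$ one has
$$\mathcal M^{\vee r}(N)=\Hom_{\mathcal R}(\mathcal M,\mathcal N)=\Hom_R(M,N),$$
and this identification is natural in $N$: a morphism $w\colon N\to N'$ of right $R$-modules induces $\mathcal M^{\vee r}(w)$, which under the above identification is simply postcomposition $w\circ(-)\colon\Hom_R(M,N)\to\Hom_R(M,N')$. Thus, as functors of abelian groups, $\,\mathcal M^{\vee r}\cong\Hom_R(M,-)$, and the problem reduces to proving $\,\Hom_{grp}(\Hom_R(M,-),\mathbb G)=\mathbb G(M)$.

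Next I would write down the two mutually inverse maps of the Yoneda correspondence. In one direction, a morphism $\phi\colon\Hom_R(M,-)\to\mathbb G$ is sent to $\phi_M(\Id_M)\in\mathbb G(M)$; this assignment is visibly additive in $\phi$. In the other direction, an element $g\in\mathbb G(M)$ is sent to the family $\phi^g=\{\phi^g_N\}_N$ defined by $\phi^g_N(w):=\mathbb G(w)(g)$ for $w\in\Hom_R(M,N)$. Naturality of $\phi^g$ is the functoriality of $\mathbb G$, and the computation $\phi^g_M(\Id_M)=\mathbb G(\Id_M)(g)=g$ together with the naturality identity $\phi_N(w)=\phi_N(w\circ\Id_M)=\mathbb G(w)(\phi_M(\Id_M))$ shows that the two assignments are inverse to one another.

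The one place where the hypothesis on $\mathbb G$ is needed, and which I regard as the only genuine point of the proof, is checking that $\phi^g$ actually lies in $\Hom_{grp}$, i.e.\ that each component $\phi^g_N$ is a homomorphism of abelian groups. This is exactly where additivity of $\mathbb G$ enters: for $w_1,w_2\in\Hom_R(M,N)$,
$$\phi^g_N(w_1+w_2)=\mathbb G(w_1+w_2)(g)=\bigl(\mathbb G(w_1)+\mathbb G(w_2)\bigr)(g)=\phi^g_N(w_1)+\phi^g_N(w_2).$$
Since both directions are additive and mutually inverse, the set-theoretic Yoneda bijection restricts to the claimed isomorphism of abelian groups. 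The main obstacle is therefore not a deep one; it reduces to the bookkeeping of ensuring, on the one hand, that the identification of $\mathcal M^{\vee r}$ with $\Hom_R(M,-)$ is genuinely natural rather than merely pointwise, and on the other, that the purely set-level Yoneda correspondence respects the additive enrichment, which the additivity of $\mathbb G$ guarantees.
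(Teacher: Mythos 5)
Your proposal is correct and follows essentially the same route as the paper: the paper identifies $\mathcal M^{\vee r}$ with the representable functor $\Hom_R(M,-)$ (via Proposition \ref{1.30}, in the paragraph preceding the lemma) and then simply invokes Yoneda's lemma. Your write-up merely makes explicit the naturality of that identification and the additivity check that the paper leaves implicit.
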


\begin{proof} It is Yoneda's lemma.

\end{proof}

\begin{theorem}\label{prop4} Let $\,M\,$ be an $\,R$-module and $\,N\,$ be a right $\,R$-module. 
Then, $${\Hom}_{\mathcal R} ({\mathcal M^\vee}, {\mathcal N})={\mathcal N}^r(M) \, .$$
\end{theorem}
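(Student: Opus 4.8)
The plan is to reduce the computation to Yoneda's lemma by means of the adjoint-functor machinery of the previous section. The whole point is that $\mathcal M^\vee$ is a \emph{dual} functor, and dual functors satisfy the reflexivity hypothesis of Corollary \ref{L5.111}; granting this, $\Hom_{\mathcal R}(\mathcal M^\vee,\mathcal N)$ becomes a group of morphisms between two functors on the category of $R$-modules, to which the (covariant) Yoneda lemma applies directly.

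First I would check that the natural morphism $\pi_{\mathcal M^\vee}\colon (\mathcal M^\vee)^{ro}\to \mathcal M^\vee$ is an isomorphism. By the first assertion of Proposition \ref{1.30} (applied to the quasi-coherent module $\mathcal M$) together with Proposition \ref{tercer}, one has $(\mathcal M^\vee)^r(Q)=\Hom_{\mathcal R}(\mathcal M,\mathcal Q)=\Hom_R(M,Q)$, naturally in the $R$-module $Q$. Restricting to $R$-algebras gives $(\mathcal M^\vee)^{ro}(S)=\Hom_R(M,S)=\mathcal M^\vee(S)$, and this identification is the one realized by $\pi_{\mathcal M^\vee}$; hence it is an isomorphism. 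We are therefore in the hypotheses of Corollary \ref{L5.111}, which, applied with $\mathbb F=\mathcal M^\vee$ and $\mathbb F'=\mathcal N$, yields the natural bijection
$$\Hom_{\mathcal R}(\mathcal M^\vee,\mathcal N)=\Hom_{grp}\bigl((\mathcal M^\vee)^r,\mathcal N^r\bigr).$$

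Second, the computation above already identifies the left-hand functor: $(\mathcal M^\vee)^r=\Hom_R(M,-)$ is the functor corepresented by $M$, exactly the functor of (co)points recalled at the start of this section. I would then apply Lemma \ref{Yoneda} — the covariant Yoneda lemma — to the additive functor $\mathbb G:=\mathcal N^r$. That $\mathcal N^r$ is additive follows from Lemma \ref{reperab}, which exhibits it as the kernel of a natural transformation between the additive functors $Q\mapsto N\otimes_R Q\otimes_{\ZZ} R$ and $Q\mapsto N\otimes_R Q\otimes_{\ZZ} R\otimes_{\ZZ} R$. Yoneda then gives $\Hom_{grp}\bigl((\mathcal M^\vee)^r,\mathcal N^r\bigr)=\mathcal N^r(M)$, and chaining this with the previous display proves the theorem.

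The only substantive step is the verification that $\pi_{\mathcal M^\vee}$ is an isomorphism; everything after it is a formal identification followed by Yoneda. I expect the main care to lie in the bookkeeping between left and right modules: since $N$ is a right $R$-module, both $\mathcal N^r$ and $(\mathcal M^\vee)^r$ are functors on the category of \emph{left} $R$-modules, and Yoneda is applied at the left $R$-module $M$. As a consistency check one may note that the resulting answer is symmetric, $\mathcal N^r(M)=\mathcal M^r(N)$, exactly as recorded in Remark \ref{repera2b}.
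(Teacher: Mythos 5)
Your proof is correct and follows essentially the same route as the paper's: verify via Proposition \ref{1.30} that $\mathcal M^\vee$ satisfies the hypothesis of Corollary \ref{L5.111}, reduce $\Hom_{\mathcal R}(\mathcal M^\vee,\mathcal N)$ to $\Hom_{grp}\bigl((\mathcal M^\vee)^r,\mathcal N^r\bigr)$, and conclude by Lemma \ref{Yoneda}. The extra details you supply --- the explicit identification $(\mathcal M^\vee)^r=\Hom_R(M,-)$, the additivity of $\mathcal N^r$, and the left/right bookkeeping --- are precisely what the paper leaves implicit, so this is a fleshed-out version of the same argument.
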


\begin{proof} $\,\mathcal{M}^\vee\,$ satisfies the hypothesis of Theorem \ref{L5.111} (see Proposition \ref{1.30}), so that
$$ {\Hom}_{\mathcal R} ({\mathcal M^\vee}, {\mathcal N}) \overset{\text{\ref{L5.111}}}= {\Hom}_{grp} ({{\mathcal M^\vee}^r}, {{\mathcal N}^r})\overset{\text{\ref{Yoneda}}}= {\mathcal N}^r(M) \ .
$$

\end{proof}

\begin{theorem} \label{reflex2}
Let $\,M\,$ be an $\,R$-module. The natural morphism of $\,\mathcal R$-modules $$\mathcal M \longrightarrow \mathcal M^{\vee\vee}$$ is an isomorphism.
\end{theorem}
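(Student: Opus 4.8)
The plan is to evaluate $\mathcal M^{\vee\vee}$ on an arbitrary $R$-algebra $S$, identify it with $\mathcal M(S)=S\otimes_R M$ through the computations already available, and then verify that the resulting bijection is precisely the canonical evaluation morphism. First I would unwind the double dual: since $\mathcal M^\vee$ is a functor of right $\mathcal R$-modules, the (right) version of the dual reads $\mathcal M^{\vee\vee}(S)=\Hom_{\mathcal R}(\mathcal M^\vee,\mathcal S)$, where $\mathcal S$ denotes the quasi-coherent right $\mathcal R$-module associated with $S$ regarded as a right $R$-module. Applying Theorem \ref{prop4} with the right $R$-module $N:=S$ then yields
$$\mathcal M^{\vee\vee}(S)=\Hom_{\mathcal R}(\mathcal M^\vee,\mathcal S)=\mathcal S^{\,r}(M).$$

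Next I would compute $\mathcal S^{\,r}(M)$. As $S$ is an $R$-algebra, it is in particular an $R$-bimodule, so Proposition \ref{super} (with the bimodule taken to be $N=S$) gives $\mathcal S^{\,r}(M)=S\otimes_R M=\mathcal M(S)$. Chaining the two displays produces a bijection $\mathcal M(S)=\mathcal M^{\vee\vee}(S)$ for every $S$. Because both Theorem \ref{prop4} and Proposition \ref{super} are functorial in the module $N$, this bijection is natural in $S$ and $S$-linear, hence assembles into an isomorphism of $\mathcal R$-modules. Throughout, the only genuine friction is the bookkeeping of left versus right $R$-module structures, which must be kept consistent at each invocation of the stated results.

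The step I expect to be the main obstacle is confirming that the composite isomorphism just constructed coincides with the \emph{natural} morphism $\mathcal M\to\mathcal M^{\vee\vee}$, rather than being merely some abstract identification. For this I would trace a generator $s\otimes m\in S\otimes_R M=\mathcal M(S)$ and check that its image is the homomorphism of right $\mathcal S$-modules $\mathcal M^\vee_{\mid S}\to\mathcal S$ that on global sections sends $\omega\in\mathcal M^\vee(S)=\Hom_R(M,S)$ to $s\cdot\omega(m)$; this is exactly the evaluation pairing defining $\mathcal M\to\mathcal M^{\vee\vee}$. Matching this with the identification coming from Theorem \ref{prop4} requires unwinding the Yoneda identification of $\mathcal M^{\vee r}$ with the functor of copoints of $M$ (Lemma \ref{Yoneda}, together with Corollary \ref{L5.111} and Proposition \ref{1.30}) and the bimodule splitting $s\otimes m\mapsto i(s\otimes m)$ used in the proof of Proposition \ref{super}. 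Once the generator is verified, naturality in $S$ propagates the agreement to all of $\mathcal M$, completing the proof.
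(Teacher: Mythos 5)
Your proposal is correct and follows essentially the same route as the paper: the paper's proof is exactly the chain $\mathcal M^{\vee\vee}(S)=\Hom_{\mathcal R}(\mathcal M^\vee,\mathcal S)=\mathcal M^r(S)=S\otimes_R M=\mathcal M(S)$, obtained by applying Theorem \ref{prop4} (together with the symmetry $\mathcal S^r(M)=\mathcal M^r(S)$ of Remark \ref{repera2b}) and then Proposition \ref{super} with the bimodule $S$. The final verification you flag as the main obstacle --- that this composite identification coincides with the canonical evaluation morphism --- is left implicit in the paper, so your extra care there only strengthens the argument without changing its substance.
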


\begin{proof} It is a consequence of Theorem \ref{prop4} and Proposition \ref{super}: 
$$\mathcal M^{\vee\vee}(S)=\Hom_{\mathcal R}(\mathcal M^\vee,\mathcal S)= \mathcal M^r(S)= S\otimes_R M=\mathcal M(S) \ .$$ 
\end{proof}

\medskip
\begin{theorem} \label{reflex}
Let $\,M\,$ be an $\,R$-module. 
The natural morphism of $\,\mathcal R$-modules $$\mathcal M \longrightarrow \mathcal M^{**}$$ is an isomorphism.
\end{theorem}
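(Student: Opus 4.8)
The plan is to imitate the proof of Theorem \ref{reflex2}, but to carry the computation out over the base $\,S\,$ rather than over $\,R$, since the two notions of dual coincide only on quasi-coherent modules and $\,\mathcal M^\vee\,$ (equivalently $\,\mathcal M^*$) is not quasi-coherent. First I would unwind the outer star through its definition as a functor of homomorphisms: for an $R$-algebra $\,S\,$,
$$\mathcal M^{**}(S)=\Hom_{\mathcal S}\bigl((\mathcal M^*)_{\mid S},\,\mathcal R_{\mid S}\bigr),$$
where $\,\mathcal R_{\mid S}\,$ is the structure functor $\,S'\mapsto S'\,$ of the category of $S$-algebras, exactly as in the computation of $\,\mathcal M^*(S)\,$ recorded just after Proposition \ref{tercer}.

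The key step is to recognize $\,(\mathcal M^*)_{\mid S}\,$ as an object living over the base $\,S$. Evaluating on an arbitrary $S$-algebra $\,S'\,$ and using $\,\mathcal M^*=\mathcal M^\vee\,$ together with the adjunction of Proposition \ref{tercer}, I would check that
$$(\mathcal M^*)_{\mid S}(S')=\mathcal M^*(S')=\Hom_R(M,S')=\Hom_S(S\otimes_R M,S')=(\mathcal M_{\mid S})^\vee(S'),$$
so that $\,(\mathcal M^*)_{\mid S}=(\mathcal M_{\mid S})^\vee\,$, the dual over $\,S\,$ of the quasi-coherent $\,\mathcal S$-module $\,\mathcal M_{\mid S}\,$ associated with the $S$-module $\,S\otimes_R M\,$; the last equality uses the coincidence of the two duals on quasi-coherent modules, read over $\,S$. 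This is the crucial maneuver: although $\,\mathcal M^*\,$ is not quasi-coherent over $\,R$, its restriction to $S$-algebras is the $\vee$-dual of something quasi-coherent over $\,S$.

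With this identification, the computation closes by re-applying over the base $\,S\,$ the results already proved over $\,R$. Theorem \ref{prop4}, read over $\,S\,$ with the $S$-module $\,S\otimes_R M\,$ and the right $S$-module $\,S\,$ (whose associated quasi-coherent module is precisely $\,\mathcal R_{\mid S}$), gives
$$\Hom_{\mathcal S}\bigl((\mathcal M_{\mid S})^\vee,\,\mathcal R_{\mid S}\bigr)=\mathcal S^r(S\otimes_R M),$$
and Proposition \ref{super}, applied to the $S$-bimodule $\,S\,$, yields $\,\mathcal S^r(S\otimes_R M)=S\otimes_S(S\otimes_R M)=S\otimes_R M=\mathcal M(S)$. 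Stringing these equalities together produces $\,\mathcal M^{**}(S)=\mathcal M(S)\,$, functorially in $\,S\,$, and a final verification that these identifications are compatible with the evaluation morphism shows that the canonical morphism $\,\mathcal M\to\mathcal M^{**}\,$ is an isomorphism.

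The main obstacle I anticipate is precisely the temptation to deduce the statement directly from Theorem \ref{reflex2}: one would like to transport the $\vee\vee$-reflexivity to $**$, but since $\,*\,$ and $\,\vee\,$ agree only on quasi-coherent modules and $\,\mathcal M^\vee\,$ is not quasi-coherent, no such shortcut is available. The real content is the base-change bookkeeping that exhibits $\,(\mathcal M^*)_{\mid S}\,$ as a $\vee$-dual over $\,S$, and one must keep careful track of the two roles played by the symbols $\,\mathcal S\,$ and $\,\mathcal R_{\mid S}\,$ (the quasi-coherent module of the $R$-module $\,S\,$ versus the structure functor of the category of $S$-algebras) so as to apply Theorem \ref{prop4} and Proposition \ref{super} with the correct arguments.
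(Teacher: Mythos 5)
Your proof is correct and takes essentially the same route as the paper's: both restrict to the category of $S$-algebras, identify $(\mathcal M^*)_{\mid S}$ with $({\mathcal M_{\mid S}})^{\vee}$ (the dual over $S$ of the quasi-coherent $\mathcal S$-module associated with $S\otimes_R M$), and then apply Theorem \ref{prop4} and Proposition \ref{super} over the base $S$. The only difference is notational: the paper writes the resulting group as ${\mathcal M_{\mid S}}^r(S)$ rather than your $\mathcal S^r(S\otimes_R M)$, and these agree by Remark \ref{repera2b}.
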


\begin{proof} Let $S$ be an $R$-algebra. $\,\mathcal M_{|S}\,$ is the $\mathcal S$-quasi-coherent module associated with $\,S\otimes_R M\,$ and $\,{\mathcal M_{|S}}^{*}= {\mathcal M_{|S}}^{\vee}\,$. Then, 

$$\aligned \mathcal M^{**}(S) & =\Hom_{\mathcal S}({\mathcal M^*}_{|S},\mathcal S)=
\Hom_{\mathcal S}({\mathcal M_{|S}}^*,\mathcal S)\overset{\text{\ref{prop4}}}={\mathcal M_{|S}}^r(S) \\ & \overset{\text{\ref{super}}}=
S\otimes_S (S\otimes_R M) =S\otimes_R M=\mathcal M(S). \endaligned $$

\end{proof}

\section{Quasi-coherent modules associated with flat modules}

Given an $R$-module $M$, let $\mathcal M_r$ be the functor of abelian groups defined by
$$\mathcal M_r(N):=N\otimes_R M.$$
for any  right $R$-module $N$. Observe that 
there exists a natural morphism $\mathcal M_r\to \mathcal M^r$ and 
$\mathcal M_r^{\, o}=\mathcal M^{ro}=\mathcal M$, by Proposition \ref{super}.

In \cite{mittagleffler}, it is given several characterizations of $\mathcal M_r$, when $M$ is a flat or projective  module. 
The adjoint functor theorem will give us the corresponding characterizations of $\mathcal M$, when $M$ is a flat or projective module. 
We have only to add the following hypothesis.

\begin{hypothesis} \label{HP2} From now on we wil assume that $\mathcal M_r=\mathcal M^r$ (if $M$ is a flat $R$-module, then $\mathcal M_r=\mathcal M^r$, by Proposition \ref{super}.)
\end{hypothesis}

\begin{definition} We will say that $\mathcal N^\vee$ is the module scheme associated with $N$.
\end{definition}

\begin{theorem} $M$ is a finitely generated projective module iff $\mathcal M$ is a module scheme.\end{theorem}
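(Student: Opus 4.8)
The plan is to prove both implications by relating the functor $\mathcal M$ to the $R$-module $M$ through the reflexivity results already established. The key identity I would exploit is that, under Hypothesis \ref{HP2}, we have $\mathcal M^r(N)=\mathcal M_r(N)=N\otimes_R M$ for every right $R$-module $N$, together with the reflexivity isomorphism $\mathcal M\cong\mathcal M^{\vee\vee}$ from Theorem \ref{reflex2}. The phrase ``module scheme'' means a functor of the form $\mathcal N^\vee$ for some right $R$-module $\mathcal N$, so the statement to prove is: $M$ is finitely generated projective if and only if $\mathcal M\cong\mathcal N^\vee$ for some right $R$-module $N$.

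First I would dispose of the easy direction. Suppose $M$ is finitely generated projective, so $M$ is a direct summand of a free module $R^n$. The natural candidate is to take $N:=M^*=\Hom_R(M,R)$, viewed as a right $R$-module, and to show $\mathcal M\cong \mathcal N^\vee$. By Theorem \ref{prop4}, for any $R$-algebra $S$ we have $\mathcal N^\vee(S)=\Hom_{\mathcal R}(\mathcal N^{\vee},\mathcal S)$, and one computes $\mathcal N^\vee = \mathcal{M}^{*\vee}=\mathcal M^{\vee\vee}\cong\mathcal M$ using the chain of identifications available: since $M$ is finitely generated projective, $M^*$ is again finitely generated projective, and the canonical map $S\otimes_R M\to\Hom_R(M^*,S)$ is an isomorphism precisely because $M$ is finitely presented and flat (equivalently, finitely generated projective). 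Concretely I would verify directly that $\mathcal M(S)=S\otimes_R M$ and $\mathcal{M}^{*\vee}(S)=\Hom_R(M^*,S)$ agree functorially in $S$, which for $M$ a summand of $R^n$ reduces to the trivial case $M=R^n$ by additivity and then descends to the summand.

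For the converse, suppose $\mathcal M\cong\mathcal N^\vee$ for some right $R$-module $N$. Taking $S=R$ (global sections) gives $M=\mathcal M(R)\cong\mathcal N^\vee(R)=\Hom_R(N,R)=N^*$, so $M$ is a dual module. To extract finite generation and projectivity I would use Theorem \ref{prop4} in the form $\Hom_{\mathcal R}(\mathcal M^\vee,\mathcal Q)=\mathcal Q^r(M)=Q\otimes_R M$ for every $R$-module $Q$; combined with $\mathcal M\cong\mathcal N^\vee$ this says $\Hom_{\mathcal R}(\mathcal M,\mathcal Q)$ is computed by $\mathcal N^{\vee\vee}\cong\mathcal N$-data, and tracking the adjunction from Theorem \ref{T3}/Corollary \ref{L5.111} should force the canonical evaluation $N\otimes_R M\to \ldots$ to be an isomorphism for all $Q$. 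The clean way is: $\mathcal M\cong\mathcal N^\vee$ forces $M$ to satisfy $\Hom_R(M,-)$ commuting with all direct limits (finite generation / finite presentation) and $-\otimes_R M$ being exact and commuting with products (flatness plus the dual-module condition), and these two together are exactly finitely generated projective.

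The main obstacle I expect is the converse direction, specifically proving finite generation: reflexivity and the functorial identities readily give that $M$ is a flat dual module, but isolating the finiteness condition requires showing that the representability of $\mathcal M$ as a dual $\mathcal N^\vee$ (a ``module scheme'') is rigid enough to force $M$ finitely generated. I would handle this by invoking the characterizations of $\mathcal M_r$ for flat and projective modules from \cite{mittagleffler} transported through the adjunction Theorem \ref{T3}, since Corollary \ref{L5.111} guarantees that $\Hom$-computations for $\mathcal M$ and for $\mathcal M^r=\mathcal M_r$ coincide, so that the module-theoretic criterion ``$M$ finitely generated projective $\iff$ $\mathcal M_r=(-)\otimes_R M$ is a finitely generated projective-type functor'' can be read off on the scheme side as ``$\mathcal M$ is a module scheme.''
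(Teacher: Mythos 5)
Your proposal is correct in substance, and its decisive step is the same as the paper's: transporting the isomorphism $\mathcal M\simeq \mathcal N^\vee$ through the $(\cdot)^r/(\cdot)^o$ adjunction and invoking \cite[3.1]{mittagleffler}. Indeed, the paper's entire proof is: for $\Rightarrow$, cite \cite[3.1]{mittagleffler} to get $\mathcal M_r\simeq \mathcal N^{\vee r}$ and apply $(\cdot)^o$, using $\mathcal M_r^{\,o}=\mathcal M$ and $\mathcal N^{\vee ro}=\mathcal N^\vee$ (Proposition \ref{1.30}); for $\Leftarrow$, apply $(\cdot)^r$, use Hypothesis \ref{HP2} to write $\mathcal M_r=\mathcal M^r\simeq \mathcal N^{\vee r}$, and cite \cite[3.1]{mittagleffler} again --- which is precisely your closing paragraph. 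Where you genuinely differ is in making $\Rightarrow$ self-contained: taking $N=M^*$ and checking $S\otimes_R M\cong \Hom_R(M^*,S)$ by reduction to $R^n$ is valid and avoids the reference for that direction (in effect you re-prove the easy half of \cite[3.1]{mittagleffler} by hand); note only that the chain $\mathcal N^\vee=\mathcal M^{*\vee}=\mathcal M^{\vee\vee}\cong\mathcal M$ is circular as an identity, since identifying the quasi-coherent module of $M^*$ with the functor dual $\mathcal M^\vee$ is exactly what finite projectivity buys, so it is your direct verification, not that chain, that carries the argument. In $\Leftarrow$, your exploratory middle is the shaky part: the claim that $\mathcal M\cong\mathcal N^\vee$ forces $\Hom_R(M,-)$ to commute with direct limits is true but unjustified as stated (one can get it from $\Hom_R(M,-)=\mathcal M^{\vee r}\cong\mathcal N^r$, via Proposition \ref{1.30}, reflexivity, and the fact that $\mathcal N^r$ is a kernel of tensor functors by Lemma \ref{reperab}, hence preserves filtered colimits); the cleaner deduction, which you also gesture at, is that Hypothesis \ref{HP2} plus Proposition \ref{1.30} turn $\mathcal M\cong\mathcal N^\vee$ into a natural isomorphism $-\otimes_R M\cong\Hom_R(N,-)$ on right $R$-modules, whence $N$ is projective (exactness) and finitely generated (preservation of direct sums), and $M\cong N^*$ is finitely generated projective. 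Since you fall back on the paper's transport-plus-citation argument anyway, that fallback makes your proof complete; the paper's version is shorter only because it outsources both implications to \cite[3.1]{mittagleffler}.
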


\begin{proof} $\Rightarrow)$ By \cite[3.1]{mittagleffler}, there exists an isomorphism
$\mathcal M_r\simeq \mathcal N^{\vee r}$. Then,
$$\mathcal M=\mathcal M_r^{\, o}\simeq \mathcal N^{\vee ro}=\mathcal N^{\vee}.$$

$\Leftarrow)$ 
$\mathcal M\simeq \mathcal N^\vee$. Then, $\mathcal M_r=\mathcal M^r \simeq \mathcal N^{\vee r}$. By \cite[3.1]{mittagleffler}, $M$ is a finitely generated projective module.

\end{proof}

\begin{theorem} $M$ is a flat module iff $\mathcal M$ is a direct limit of module schemes.\end{theorem}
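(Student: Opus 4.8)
The plan is to mirror the argument just given for finitely generated projective modules, transporting a characterization of flat modules from \cite{mittagleffler}---stated in terms of the functor $\mathcal M_r$ on right $R$-modules---across the equivalence furnished by the adjoint functor theorem of the previous section. Concretely, I would invoke the flat analog of \cite[3.1]{mittagleffler}: $M$ is flat if and only if $\mathcal M_r$ is a direct limit of module scheme functors, $\mathcal M_r\simeq \varinjlim_i \mathcal N_i^{\vee r}$. Granting this, the theorem reduces to checking that the two passages $(-)^o$ and $(-)^r$ carry direct limits to direct limits and interchange $\mathcal N_i^{\vee r}$ with $\mathcal N_i^\vee$.

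For the direction $(\Rightarrow)$, I would assume $M$ flat, so $\mathcal M_r\simeq \varinjlim_i \mathcal N_i^{\vee r}$ by the cited result, and then apply $(-)^o$. Since $\mathbb G^o(S)=\mathbb G(S)$ and direct limits of functors are computed objectwise, $(-)^o$ commutes with direct limits; combined with the identity $\mathcal M=\mathcal M_r^{\,o}$ and with $\mathcal N_i^{\vee ro}=\mathcal N_i^\vee$ (which holds because $\pi_{\mathcal N_i^\vee}$ is an isomorphism, by Proposition~\ref{1.30}), this yields
$$\mathcal M=\mathcal M_r^{\,o}\simeq \varinjlim_i \mathcal N_i^{\vee ro}=\varinjlim_i \mathcal N_i^\vee,$$
exhibiting $\mathcal M$ as a direct limit of module schemes.

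For the direction $(\Leftarrow)$, I would assume $\mathcal M\simeq \varinjlim_i \mathcal N_i^\vee$ and apply $(-)^r$. The key point here is that $\mathbb F^r(N)$ is defined as a kernel (Definition~\ref{D4.2}), that direct limits are computed objectwise, and that filtered colimits commute with finite limits in $\mathbb Z$-Mod; hence $(-)^r$ commutes with (filtered) direct limits, giving $\mathcal M^r\simeq \varinjlim_i \mathcal N_i^{\vee r}$. Invoking Hypothesis~\ref{HP2} to identify $\mathcal M_r=\mathcal M^r$, I would obtain $\mathcal M_r\simeq \varinjlim_i \mathcal N_i^{\vee r}$, and the cited characterization in \cite{mittagleffler} forces $M$ to be flat.

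The genuinely new content is slight: the argument is a dictionary translation through the functors $(-)^o$ and $(-)^r$, so the only substantive points to verify are the two interchange-of-limits claims. The hard part will be the $(\Leftarrow)$ direction, where one must confirm that $(-)^r$ commutes with the direct limit; this relies on the limit being filtered (so that it commutes with the kernel defining $\mathbb F^r$) and on the objectwise computation of colimits in the functor category. By contrast, the $(\Rightarrow)$ direction and the identifications $\mathcal N_i^{\vee ro}=\mathcal N_i^\vee$ are formal.
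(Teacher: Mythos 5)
Your proposal is correct and takes essentially the same route as the paper: cite the characterization of flat modules from \cite{mittagleffler} (the paper's reference is [3.2]), apply $(-)^o$ for the forward direction using $\mathcal M=\mathcal M_r^{\,o}$ and $\mathcal N_i^{\vee ro}=\mathcal N_i^{\vee}$, and apply $(-)^r$ together with Hypothesis~\ref{HP2} for the converse. The paper simply asserts the exchanges of $(-)^o$ and $(-)^r$ with the direct limit, whereas you justify them (objectwise colimits, kernels commuting with filtered colimits); this is a fair filling-in of detail, not a different argument.
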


\begin{proof} $\Rightarrow)$ By \cite[3.2]{mittagleffler}, there exists an isomorphism
$\mathcal M_r\simeq \ilim{i\in I} \mathcal N_i^{\vee r}$. Then,
$$\mathcal M=\mathcal M_r^{\,s}\simeq  (\ilim{i\in I} \mathcal N_i^{\vee r})^o= \ilim{i\in I} \mathcal N_i^{\vee ro}=\ilim{i\in I} \mathcal N_i^{\vee}.$$

$\Leftarrow)$ 
$\mathcal M\simeq \ilim{i\in I} \mathcal N^\vee$. Then, $\mathcal M_r=\mathcal M^r \simeq \ilim{i\in I} \mathcal N^{\vee r}$. By \cite[3.2]{mittagleffler}, $M$ is flat.

\end{proof}

\begin{lemma} \label{L6.5A} Let $N_1,N_2$ be right $R$-modules. Then, \begin{enumerate}

\item  $ \Hom_{\R}(\mathcal N_1^\vee,\mathcal  N_2^\vee) =\Hom_{grp}(\mathcal N_1^{\vee r},\mathcal \mathcal  N_2^{\vee r})$.

\item  $ \Hom_{\R}(\mathcal N_1,\mathcal  N_2) =\Hom_{grp}(\mathcal N_{1 r},\mathcal \mathcal  N_{2 r})$.

\item  $\Hom_{\R}(\mathcal N_1^\vee,\mathcal M) =\Hom_{grp}(\mathcal N_1^{\vee r},\mathcal M_r)$.

\item  $\Hom_{\R}(\mathcal M^\vee,\mathcal N_1) =\Hom_{grp}(\mathcal M^{\vee r},\mathcal N_{1r})$.
\end{enumerate} 
\end{lemma}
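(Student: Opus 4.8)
The plan is to derive all four identities as instances of the single adjunction statement in Corollary~\ref{L5.111} (together with its right-handed mirror, legitimate by the principle recorded at the end of Section~\ref{preliminar}). Recall that Corollary~\ref{L5.111} gives $\Hom_{\mathcal R}(\mathbb F,\mathbb F')=\Hom_{grp}(\mathbb F^r,\mathbb F'^r)$ as soon as the \emph{first} argument $\mathbb F$ satisfies $\pi_{\mathbb F}\colon \mathbb F^{ro}\to\mathbb F$ an isomorphism. So the whole proof reduces to checking that hypothesis for the first entry of each $\Hom$, and then rewriting the resulting right-hand side into the stated notation.

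First I would record the two sources of the required reflexivity. On one hand, every dual functor qualifies: by Proposition~\ref{1.30} the morphism $\pi_{\mathbb G^\vee}$ is an isomorphism for any $\mathbb G$, so both $\mathcal N_1^\vee$ and $\mathcal M^\vee$ meet the hypothesis. On the other hand, every quasi-coherent module qualifies: since $\mathcal R$-modules are only evaluated at $R$-algebras, and an $R$-algebra is in particular an $R$-bimodule, Proposition~\ref{super} yields $\mathcal N^{ro}(S)=\mathcal N^r(S)=N\otimes_R S=\mathcal N(S)$, i.e.\ $\mathcal N^{ro}=\mathcal N$ (this is the identity $\mathcal N_r^{\,o}=\mathcal N^{ro}=\mathcal N$ noted at the start of the previous section). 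Thus $\mathcal N_1$ and $\mathcal N_2$ also meet the hypothesis. Alongside this I would record the bookkeeping identifications that convert the output of the corollary into the displayed form: $\mathcal N^r=\mathcal N_r$ for quasi-coherent modules (Proposition~\ref{super}) and $\mathcal M^r=\mathcal M_r$ for $M$ (Hypothesis~\ref{HP2}), while $\mathcal N_1^{\vee r}$ and $\mathcal M^{\vee r}$ already appear in the reduced form of functors of (co)points and need no further rewriting.

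With these in hand each item is a one-line application. For (1) I take $\mathbb F=\mathcal N_1^\vee$, $\mathbb F'=\mathcal N_2^\vee$, and for (3) I take $\mathbb F=\mathcal N_1^\vee$, $\mathbb F'=\mathcal M$; both $\Hom$'s live in the category of $\mathcal R$-modules and rely on the reflexivity of the dual $\mathcal N_1^\vee$. For (2) I take $\mathbb F=\mathcal N_1$, $\mathbb F'=\mathcal N_2$, and for (4) I take $\mathbb F=\mathcal M^\vee$, $\mathbb F'=\mathcal N_1$; here the relevant $\Hom$ is computed in the category of \emph{right} $\mathcal R$-modules, so I would invoke the right-handed analogue of Corollary~\ref{L5.111}. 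In each case the corollary produces $\Hom_{grp}(\mathbb F^r,\mathbb F'^r)$, and substituting $\mathcal N^r=\mathcal N_r$ (resp.\ $\mathcal M^r=\mathcal M_r$) yields exactly the four displayed equalities.

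The step I would flag as the genuine content — everything else being formal substitution — is the verification that the first argument of each $\Hom$ satisfies the reflexivity hypothesis of Corollary~\ref{L5.111}: for the duals this is Proposition~\ref{1.30}, and for the quasi-coherent modules $\mathcal N_i$ it rests on the bimodule case of Proposition~\ref{super} applied to the algebras at which the functors are evaluated. A secondary point requiring care is the left/right bookkeeping, namely reading (2) and (4) inside the category of right $\mathcal R$-modules before applying the mirrored corollary, and invoking Proposition~\ref{super}/Hypothesis~\ref{HP2} in the correct handedness to pass from $\mathcal N^r,\mathcal M^r$ to $\mathcal N_r,\mathcal M_r$.
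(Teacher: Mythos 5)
Your items (1) and (3) are fine and coincide with the paper's own argument: Corollary \ref{L5.111} applies because $\pi_{\mathcal N_1^\vee}$ is an isomorphism by Proposition \ref{1.30}, and in (3) the final rewriting $\mathcal M^r=\mathcal M_r$ is exactly Hypothesis \ref{HP2}. The gap is in the ``bookkeeping identification'' you rely on for (2) and (4): you claim $\mathcal N^r=\mathcal N_r$ \emph{as functors} for an arbitrary right $R$-module $N$, citing Proposition \ref{super}. That proposition does not say this. It gives $\mathcal N^r(M)=N\otimes_R M$ only when the argument $M$ (or $N$ itself) is a bimodule or flat; this is enough to conclude $\mathcal N^{ro}=\mathcal N$ (algebras are bimodules), which is the part of your setup that is correct, but it says nothing about $\mathcal N^r(M)$ at a general left module $M$. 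Indeed, if $\mathcal N^r=\mathcal N_r$ held for every module, Hypothesis \ref{HP2} --- and the explicit hypothesis ``$\mathcal M^r(N)=N\otimes_R M$ for any right $R$-module $N$'' carried by the theorems in the introduction --- would be vacuous: that equality is precisely the nontrivial assumption the paper imposes, and it is imposed on $M$ alone, with nothing assumed about $N_1,N_2$.

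Concretely, in (2) your chain legitimately reaches $\Hom_{grp}(\mathcal N_1^r,\mathcal N_2^r)$, but the passage to $\Hom_{grp}(\mathcal N_{1r},\mathcal N_{2r})$ is unjustified. The paper proves (2) by a different route that avoids the extension functors altogether: $\Hom_{\R}(\mathcal N_1,\mathcal N_2)=\Hom_R(N_1,N_2)$ by Proposition \ref{tercer}, and then $\Hom_R(N_1,N_2)=\Hom_{grp}(\mathcal N_{1r},\mathcal N_{2r})$ by \cite[2.4]{mittagleffler}; some such full-faithfulness statement about the functors $\mathcal N_{ir}$ on the module category is genuinely needed here and cannot be extracted from Corollary \ref{L5.111}. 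In (4) your chain stops at $\Hom_{grp}(\mathcal M^{\vee r},\mathcal N_1^r)$; this case is repairable with tools from the paper, but not by the functor identity you invoke: apply Yoneda (Lemma \ref{Yoneda}) to both Hom groups to reduce the question to the single value $\mathcal N_1^r(M)=\mathcal N_{1r}(M)$, and obtain that from the swap $\mathcal N_1^r(M)=\mathcal M^r(N_1)$ (Remark \ref{repera2b}) together with Hypothesis \ref{HP2} applied to $M$ --- which is essentially the paper's actual proof of (4) via Theorem \ref{prop4}.
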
 

\begin{proof} 1. It is Corollary \ref{L5.111}.

2. $ \Hom_{\R}(\mathcal N_1,\mathcal  N_2) \overset{\text{\ref{tercer}}}=\Hom_{R}(N_1,N_2)\overset{\text{\cite[2.4]{mittagleffler}}}= \Hom_{grp}(\mathcal N_{1 r},\mathcal \mathcal  N_{2 r})$.

3. $\Hom_{\R}(\mathcal N_1^\vee,\mathcal M) \overset{\text{\ref{L5.111}}}=\Hom_{grp}(\mathcal N_1^{\vee r},\mathcal M^r)=\Hom_{grp}(\mathcal N_1^{\vee r},\mathcal M_r)$.

4. $\Hom_{\R}(\mathcal M^\vee,\mathcal N_1) \overset{\text{\ref{prop4}}}=\mathcal N_1^r(M)\overset{\text{\ref{repera2b}}}=\mathcal M^r(N_1)=\mathcal M_r(N_1)=N_1\otimes_RM$\newline \phantom{pp}
\hskip 3cm $\overset{\text{\ref{Yoneda}}}=\Hom_{grp}(\mathcal M^{\vee r},\mathcal N_{1r})$.

\end{proof}

\begin{lemma} \label{L6.5} 
Let $f\colon  \mathbb F_1\to \mathbb F_2$ be a morphism of $\R$-modules. Then, $f$ is a monomorphism
iff the morphism of functors of groups $f^r\colon  \mathbb F_1^r\to \mathbb F_2^r$ is a monomorphism.

\end{lemma}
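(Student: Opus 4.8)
The forward implication is immediate from the construction of $f^r$. Recall that $\mathbb F_1^r(M)$ is a subgroup of $\mathbb F_1(R\langle M\rangle)$ and that $f^r_M$ is simply the restriction of $f_{R\langle M\rangle}$ to this subgroup (this is how $\mathbb F^r(w)$ and hence $f^r$ are defined after Definition \ref{D4.2}). Hence, if $f$ is a monomorphism, so that every $f_S$ is injective, then in particular each $f_{R\langle M\rangle}$ is injective, and therefore so is its restriction $f^r_M$. Thus $f^r$ is a monomorphism, and no further work is needed in this direction.

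For the converse, the plan is to descend the injectivity of $f^r$ to $f$ through the counit $\pi$. First I would observe that, since $\mathbb F^{ro}(S)=\mathbb F^r(S)$ and $f^{ro}_S=f^r_S$ when $S$ is regarded as its underlying right $R$-module, the hypothesis that $f^r$ is a monomorphism immediately yields that $f^{ro}\colon\mathbb F_1^{ro}\to\mathbb F_2^{ro}$ is a monomorphism. Next I would invoke the naturality square of Proposition \ref{PB}, namely $f\circ\pi_{\mathbb F_1}=\pi_{\mathbb F_2}\circ f^{ro}$, which is precisely the commutativity relation tying $f$ to the monomorphism $f^{ro}$. The goal is then to convert this relation into injectivity of $f$.

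The decisive point is that the counit $\pi_{\mathbb F}\colon\mathbb F^{ro}\to\mathbb F$ is an isomorphism for the $\mathcal R$-modules in play; this is the reflexivity supplied by Proposition \ref{super} (it is exactly the standing hypothesis of Corollary \ref{L5.111}, and it holds for quasi-coherent modules, their duals, and the direct limits of such that occur in the applications). Granting that $\pi_{\mathbb F_1}$ and $\pi_{\mathbb F_2}$ are isomorphisms, the square rewrites as
\[ f=\pi_{\mathbb F_2}\circ f^{ro}\circ \pi_{\mathbb F_1}^{-1}, \]
a composite of two isomorphisms with the monomorphism $f^{ro}$, and hence $f$ is a monomorphism.

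I expect the main obstacle to be exactly this descent step. The functor $(-)^r$ is a right adjoint to $(-)^o$ by Theorem \ref{T3}, so it formally preserves monomorphisms (which re-proves the forward direction), but reflecting them is a different matter: $f^r$ only records the behaviour of $f$ on the free algebras $R\langle M\rangle$ restricted to the subgroups $\mathbb F^r(M)$, and without the reflexivity $\pi_{\mathbb F}\cong\mathrm{id}$ the construction need not detect the zero object. The crux of the argument is therefore to lean on the isomorphism $\pi_{\mathbb F}$ of Proposition \ref{super}, rather than on formal adjointness alone, so that the composite displayed above can be formed and $f$ recovered as a monomorphism.
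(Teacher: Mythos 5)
Your forward direction is exactly the paper's: $f^r_N$ is the restriction of $f_{R\langle N\rangle}$ to $\mathbb F_1^r(N)\subseteq \mathbb F_1(R\langle N\rangle)$, so it inherits injectivity. Your converse is also, in substance, the paper's argument. The paper's proof of that direction reads, in its entirety: ``if $f^r$ is a monomorphism, then $f=f^{ro}$ is a monomorphism, since $f^{ro}_S=f^r_S$''. Writing $f=f^{ro}$ tacitly identifies $\mathbb F_1$ and $\mathbb F_2$ with $\mathbb F_1^{ro}$ and $\mathbb F_2^{ro}$; that identification is the morphism $\pi$ of Proposition \ref{PB}, and invoking it together with the commutative square of that proposition is precisely your formula $f=\pi_{\mathbb F_2}\circ f^{ro}\circ \pi_{\mathbb F_1}^{-1}$. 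The only substantive difference is that you state the assumption that $\pi_{\mathbb F_1}$ and $\pi_{\mathbb F_2}$ are isomorphisms, whereas the paper uses it silently.

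That difference deserves emphasis, because the lemma is stated for arbitrary $\R$-modules, with no reflexivity hypothesis; so, strictly speaking, your proof does not establish the lemma as stated --- but neither does the paper's, and no proof could, since the unrestricted converse is false. As $(-)^r$ is a kernel construction, it suffices to exhibit a nonzero $\R$-module $\mathbb F$ with $\mathbb F^r=0$: then $\mathbb F\to 0$ is not a monomorphism although its $r$-extension is. Take $R=k$ a field and $\mathbb F(S):=\Ker(S\otimes_k S\to S)$, the kernel of the multiplication map (the bimodule of differentials of $S$), with its left $S$-module structure; for a free algebra $R\langle N\rangle$ this bimodule is free on the symbols $dn$, $n\in N$, and a direct comparison of the shapes of the words produced by $\mathbb F(h_x)$ and by $x\cdot \mathbb F(in)$ shows that $\mathbb F(h_x)-x\cdot\mathbb F(in)$ has zero kernel for every right module $N$, while $\mathbb F\neq 0$. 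So the hypothesis you isolate is genuinely necessary; it is exactly the hypothesis of Corollary \ref{L5.111}, and your claim that it holds in all relevant cases is correct: for quasi-coherent modules it follows from Proposition \ref{super} (because every $R$-algebra is an $R$-bimodule), for module schemes $\mathcal N^\vee$ it is Proposition \ref{1.30} rather than Proposition \ref{super}, and it passes to the direct limits, sums and products occurring in Theorem \ref{ML}, Theorem \ref{blabla} and Proposition \ref{CR7}, since $(-)^r$ commutes with filtered direct limits and with products. With that hypothesis made part of the statement, your proof is complete, and it is the careful version of the argument the paper only sketches.
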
 

\begin{proof} If $f$ is a monomorphism, then $f^r$ is a monomorphism, since $f^r_N=f_{R\langle N\rangle}$ on $\mathbb F^r_1(N)\subseteq 
\mathbb F_1(R\langle N\rangle)$. If $f^r$ is a monomorphism, then $f=f^{ro}$
is a monomorphism since $f^{ro}_{S}=f^r_S$.

\end{proof}

\begin{theorem} \label{ML} Let $M$ be an $R$-module. The following statements are equivalent

\begin{enumerate}
\item $M$ is a flat Mittag-Leffler module.

\item Every morphism of $\mathcal R$-modules $\mathcal N^\vee\to \mathcal M$ factors through an $\R$-submodule scheme of $\mathcal M$, for any right $R$-module $N$.

\item $\mathcal M$ is equal to a direct limit of  $\R$-submodule schemes.
\end{enumerate}
\end{theorem}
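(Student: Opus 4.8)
The plan is to reduce each of the three conditions to the corresponding statement about the functor $\mathcal M_r$ on right $R$-modules, for which the equivalence with flat Mittag-Leffler-ness is already available in \cite{mittagleffler}, and then to transport that equivalence through the dictionary provided by the operations $(-)^r$ and $(-)^o$. The whole argument rests on Hypothesis \ref{HP2}, which gives $\mathcal M^r=\mathcal M_r$, together with the identity $\mathcal M_r^{\,o}=\mathcal M$.

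First I would set up the correspondence between $\R$-submodule schemes of $\mathcal M$ and submodule schemes of $\mathcal M_r$. Given a monomorphism $\mathcal N^\vee\hookrightarrow\mathcal M$, applying $(-)^r$ produces a morphism $\mathcal N^{\vee r}\to\mathcal M^r=\mathcal M_r$ which is again a monomorphism by Lemma \ref{L6.5}, i.e.\ a submodule scheme of $\mathcal M_r$; conversely, applying $(-)^o$ to a monomorphism $\mathcal N^{\vee r}\hookrightarrow\mathcal M_r$ and using $\mathcal N^{\vee ro}=\mathcal N^\vee$ and $\mathcal M_r^{\,o}=\mathcal M$ returns a submodule scheme of $\mathcal M$, again by Lemma \ref{L6.5}. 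By Lemma \ref{L6.5A}(3) the bijection $\Hom_{\R}(\mathcal N^\vee,\mathcal M)=\Hom_{grp}(\mathcal N^{\vee r},\mathcal M_r)$ is given by $f\mapsto f^r$, so it identifies these two assignments as mutually inverse; together with Lemma \ref{L6.5A}(1), applied to the intermediate map $\mathcal N^\vee\to\mathcal N'^\vee$, and the functoriality of $(-)^r$, it carries a factorization $\mathcal N^\vee\to\mathcal N'^\vee\hookrightarrow\mathcal M$ into a factorization $\mathcal N^{\vee r}\to\mathcal N'^{\vee r}\hookrightarrow\mathcal M_r$, and back.

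With this dictionary in hand, the equivalences follow formally. For $(1)\Leftrightarrow(2)$, condition $(2)$ says that every $f\in\Hom_{\R}(\mathcal N^\vee,\mathcal M)$ factors through a submodule scheme; under the identification above this is exactly the assertion that every morphism $\mathcal N^{\vee r}\to\mathcal M_r$ factors through a submodule scheme of $\mathcal M_r$, which by the corresponding characterization in \cite{mittagleffler} holds precisely when $M$ is flat Mittag-Leffler. For $(1)\Leftrightarrow(3)$, I would use that $(-)^r$, being the kernel of Definition \ref{D4.2}, commutes with filtered direct limits, since filtered colimits are exact in $\ZZ\text{-Mod}$: thus $\mathcal M=\ilim{i} \mathcal N_i^\vee$ with the $\mathcal N_i^\vee$ submodule schemes yields $\mathcal M_r=\mathcal M^r=\ilim{i}\mathcal N_i^{\vee r}$ as a direct limit of submodule schemes, and conversely applying $(-)^o$ recovers $\mathcal M=\ilim{i}\mathcal N_i^\vee$; the equivalence of the latter property for $\mathcal M_r$ with flat Mittag-Leffler-ness of $M$ is again the statement of \cite{mittagleffler}.

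The main obstacle I anticipate lies entirely in this transport step: verifying that $(-)^r$ and $(-)^o$ genuinely interchange the four relevant notions — monomorphism, submodule scheme, factorization through a submodule scheme, and filtered direct limit — cleanly enough that the factorization condition of $(2)$ and the colimit condition of $(3)$ match their counterparts for $\mathcal M_r$. The monomorphism and Hom identifications are supplied by Lemmas \ref{L6.5} and \ref{L6.5A}, so the one genuinely technical point is the commutation of $(-)^r$ with filtered direct limits, which I would justify by applying the exactness of filtered colimits of abelian groups to the defining kernel sequence of Definition \ref{D4.2}.
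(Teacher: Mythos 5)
Your proposal is correct and follows essentially the same route as the paper: the paper's proof of Theorem \ref{ML} is exactly this reduction to the characterization of $\mathcal M_r$ in \cite[4.5]{mittagleffler}, transported through $(-)^r$ and $(-)^o$ via Lemmas \ref{L6.5A} and \ref{L6.5}, with the equivalence $\mathcal M\simeq \ilim{i\in I}\mathcal N_i^\vee$ iff $\mathcal M_r=\mathcal M^r\simeq \ilim{i\in I}\mathcal N_i^{\vee r}$ handling part (3). Your explicit justification that $(-)^r$ commutes with filtered direct limits (exactness of filtered colimits applied to the defining kernel) is precisely the detail the paper leaves implicit in that last equivalence.
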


\begin{proof} $1. \iff 2.$ It is an immediate consequence of \cite[4.5]{mittagleffler}, Lemma \ref{L6.5A}  and Lemma \ref{L6.5}.

$1. \iff 3.$ It is an immediate consequence of \cite[4.5]{mittagleffler}  and Lemma \ref{L6.5}, since $\mathcal M\simeq \ilim{i\in I}\mathcal N_i^\vee$ iff $\mathcal M_r=\mathcal M^r\simeq \ilim{i\in I}\mathcal N_i^{\vee r}$. 

\end{proof}

\begin{lemma} \label{L6.8} A morphism of $\R$-modules $f\colon \mathcal M^{\vee} \to \prod_{i\in I} \mathcal N_i$ is an epimorphism iff the corresponding morphism of functors of groups  $\mathcal M^{\vee r}\to  \prod_{i\in I} \mathcal N_{ir}$ (see \ref{L6.5A} (4))  is an epimorphism.\end{lemma}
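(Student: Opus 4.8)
The plan is to move the question across the bijection of Lemma~\ref{L6.5A}(4). Because each $\mathcal{N}_i$ is quasi-coherent, $\pi_{\mathcal{N}_i}$ is an isomorphism (Propositions~\ref{1.30} and \ref{super}); as $(-)^r$ is defined by a kernel it commutes with the product, so $\pi_{\prod_i\mathcal{N}_i}$ is an isomorphism and $(\prod_i\mathcal{N}_i)^{ro}=\prod_i\mathcal{N}_i$. The same holds for the source, $\pi_{\mathcal{M}^\vee}$ being an isomorphism by Proposition~\ref{1.30}. Hence $f\mapsto f^r$ is a bijection whose inverse is $g\mapsto g^o$; in particular $f^{ro}=f$, and for every $R$-algebra $S$ the map $f^r_S$ is carried to $f_S$ under the identifications $\mathcal{M}^{\vee r}(S)=\mathcal{M}^\vee(S)$ and $\prod_i\mathcal{N}_{ir}(S)=\prod_i\mathcal{N}_i(S)$. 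As in Lemma~\ref{L6.5}, ``epimorphism'' is read pointwise, and I treat the two implications in turn.

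The implication from $f^r$ to $f$ is formal: if $f^r$ is a pointwise surjection, then in particular $f^r_S$ is surjective for every $R$-algebra $S$, and since $f_S=f^r_S$ the morphism $f=f^{ro}$ is surjective on every $S$, hence an epimorphism. This mirrors the second half of the proof of Lemma~\ref{L6.5}.

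For the converse I assume $f$ is an epimorphism and must show that $f^r_Q\colon\mathcal{M}^{\vee r}(Q)\to\prod_i\mathcal{N}_{ir}(Q)$ is surjective for every $R$-module $Q$. The device I would use is that the inclusion $i_Q\colon Q\to R\langle Q\rangle$ onto the degree-one summand is a split monomorphism of $R$-modules: a natural retraction $p_Q\colon R\langle Q\rangle\to Q$ is given by projecting onto the degree-one part $Q\otimes_{\ZZ} R$ and then multiplying, $q\otimes r\mapsto qr$. Applying the functors $\mathcal{M}^{\vee r}$ and $\prod_i\mathcal{N}_{ir}$ to $Q\overset{i_Q}\to R\langle Q\rangle\overset{p_Q}\to Q$ and invoking the naturality of $f^r$, I get a commutative ladder with two rows whose horizontal composites are the identity and whose middle vertical arrow is $f^r_{R\langle Q\rangle}$. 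Since $R\langle Q\rangle$ is (the module underlying) an $R$-algebra, this middle arrow is identified through $\pi$ with $f_{R\langle Q\rangle}$, which is surjective because $f$ is an epimorphism. A standard retract chase---lift a given element of $\prod_i\mathcal{N}_{ir}(Q)$ along the surjection $f^r_{R\langle Q\rangle}$ and push it back by $\mathcal{M}^{\vee r}(p_Q)$---then exhibits a preimage in $\mathcal{M}^{\vee r}(Q)$, so $f^r_Q$ is surjective.

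I expect the converse to be the crux, and inside it the fact that $f^{ro}=f$ and that $f^r_{R\langle Q\rangle}$ is surjective: this is precisely where the hypothesis that the target is a product of quasi-coherent modules is essential, since for an arbitrary target $\pi$ need not be an isomorphism and the construction $(-)^r$, being of right-adjoint type (Theorem~\ref{T3}), does not in general preserve epimorphisms. The remaining care is routine but real: checking that $p_Q$ is $R$-linear and natural in $Q$ so that the ladder commutes, and keeping the left/right bookkeeping straight in the identifications $\mathcal{M}^{\vee r}(Q)=\Hom_R(M,Q)$ and $\mathcal{N}_{ir}(Q)=N_i\otimes_R Q$.
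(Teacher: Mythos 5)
Your setup (moving the question across Lemma~\ref{L6.5A}(4)) and the easy direction ($f^r$ epimorphism $\Rightarrow$ $f$ epimorphism, via $f^r_S=f_S$ on algebras) are fine and agree with the paper, which indeed leaves that direction implicit. But the crux of your converse rests on a map that does not exist. For an arbitrary left $R$-module $Q$ you claim that $i_Q\colon Q\to R\langle Q\rangle$ is split by an $R$-linear retraction $p_Q\colon R\langle Q\rangle\to Q$, ``project to the degree-one part and multiply, $q\otimes r\mapsto qr$''. The degree-one component of $R\langle Q\rangle$ is $Q\otimes_{\ZZ}R$, and the collapse $q\otimes r\mapsto qr$ requires a \emph{right} $R$-action on $Q$; since $R$ is an arbitrary associative ring and $Q$ is only a left module, no such action is available. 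The alternative $q\otimes r\mapsto rq$ using the left action is not left $R$-linear: it would send $r'\cdot(q\otimes r)=(r'q)\otimes r$ to $rr'q$ rather than to $r'\cdot(rq)=r'rq$. So $i_Q$ is in general not a split monomorphism of $R$-modules, and your retract chase collapses. Your device works only when $Q$ carries a bimodule structure (e.g.\ $Q$ free, or the underlying module of an algebra, cf.\ Proposition~\ref{super}) or when $R$ is commutative --- which is precisely not the generality of the paper.

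This difficulty is exactly what the paper's proof is organized around, and its fix shows what your argument is missing. The paper proceeds in two reductions: (i) from arbitrary $N$ to free $N$, by choosing a free presentation $L\twoheadrightarrow N$ and using that the target functor $\prod_{i}\mathcal N_{ir}$ preserves epimorphisms (right-exactness of each $N_i\otimes_R-$, plus the fact that a product of surjections is surjective); (ii) for free $N$, which \emph{is} naturally an $R$-bimodule, it uses the epimorphism of $R$-modules $R\langle N\rangle\to N\cdot R\to N$ --- an epimorphism, not a splitting --- together with the surjectivity of $f^r_{R\langle N\rangle}=f_{R\langle N\rangle}$ and right-exactness once more. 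Your proposal can be repaired along the same lines: replace the split pair $(i_Q,p_Q)$ by the composite surjection $R\langle L\rangle\twoheadrightarrow L\twoheadrightarrow Q$ with $L$ free, and invoke the preservation of epimorphisms by $\prod_i\mathcal N_{ir}$ --- an ingredient your split-based chase never uses, but which becomes indispensable once the splitting is gone.
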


\begin{proof} $\Rightarrow)$ $f=(\sum_j n_{ij}\otimes m_{ij})_{i\in I}$ through the equality
$$\Hom_{\R}(\mathcal M^{\vee}, \prod_{i\in I} \mathcal N_i)=
\Hom_{\R}(\mathcal M^{\vee r}, \prod_{i\in I} \mathcal N_{i,r})\overset{\text{\ref{Yoneda}}}=
\prod_{i\in I} \mathcal N_{i,r}(M)=\prod_{i\in I} ( N_i\otimes_R M).$$

We have to prove that the morphism 
$$\xymatrix{\Hom_{R}(M,N) \ar@{=}[r] & \mathcal M^{\vee r}(N)\ar[r] &  \prod_{i\in I} \mathcal N_{ir}(N)\ar@{=}[r] & \prod_{i\in I} (N_{ir}\otimes N)\\  h \ar@{|->}[rrr] & & & (\sum_j n_{ij}\otimes h(m_{ij}))_{i\in I}}$$ is an epimorphism, for any $R$-module $N$. If $N$ is an $R$-algebra, then it is an epimorphism, since $f$ is an epimorphism.
We can suppose that $N$ is a free $R$-module, since
the functor $\prod_{i\in I} \mathcal N_{ir}$ preserves epimorphisms.
In this case $N$ is naturally a bimodule. Let $\pi\colon R\langle N\rangle \to N$ be the composition of the obvious morphisms of $R$-modules $R\langle N\rangle\to N\cdot R\to N$. Obviously, $\pi$ is an epimorphism. Then, we can suppose that $N$ is an $R$-algebra. We conclude.
\end{proof}

\begin{theorem} \label{blabla} Let $\,M\,$ be an $\,R$-module. The following statements are equivalent:

\begin{enumerate} 
\item $\,M\,$ is a flat strict Mittag-Leffler module.

\item Any morphism $\,f\colon \mathcal M^\vee\to \mathcal N\,$ factors through the quasi-coherent module associated with $\,\Ima f_R$, for any right $\,R$-module $\,N$.

\item Any morphism $\,f\colon \mathcal M^\vee\to \mathcal R\,$ factors through the quasi-coherent module associated with $\,\Ima f_R$.

\item Let $\,\{M_i\}_{i\in I}\,$ be the set of all  finitely generated $\,R$-submodules of $\,M$, and $\,M'_i:=\Ima[M^*\to M_i^*]$. The natural morphism
$$\mathcal M\to \lim \limits_{\rightarrow} {\mathcal M_i'}^\vee$$
is an isomorphism.

\item $\,\mathcal M\,$ is a direct limit of submodule schemes, $\,\mathcal N_i^\vee\subseteq\mathcal M\,$ and the dual morphism
$\,\mathcal M^\vee\to \mathcal N_i\,$ is an epimorphism, for any $\,i$.

\item There exists a monomorphism $\,\mathcal M\hookrightarrow \prod_{I}\mathcal R$.

\end{enumerate}
\end{theorem}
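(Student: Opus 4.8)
The plan is to transport each of the six conditions, which are stated in the category of $\mathcal R$-modules, to an equivalent statement about the associated functor of abelian groups $\mathcal M_r=\mathcal M^r$ (the equality being Hypothesis \ref{HP2}), and then to recognize the transported statements as the characterizations of flat strict Mittag-Leffler modules already obtained in \cite{mittagleffler}. The dictionary is furnished by the adjoint functor theorem: since $\mathcal M^\vee$ satisfies the hypothesis of Corollary \ref{L5.111} (Proposition \ref{1.30}), one has $\Hom_{\mathcal R}(\mathcal M^\vee,\mathbb F')=\Hom_{grp}(\mathcal M^{\vee r},\mathbb F'^{r})$ for every $\mathcal R$-module $\mathbb F'$; Lemma \ref{L6.5} converts monomorphisms of $\mathcal R$-modules into monomorphisms of functors of groups and back; and Lemma \ref{L6.8} does the same for epimorphisms onto products. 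I will also use the reflexivity theorem \ref{reflex2}, Theorem \ref{prop4} (so that $\Hom_{\mathcal R}(\mathcal M^\vee,\mathcal R)=\mathcal M^r(R)=M$), Lemma \ref{Yoneda} (Yoneda), and the fact that $\mathcal N^{ro}=\mathcal N$ on the quasi-coherent modules in play (Proposition \ref{super}).

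First I would dispose of the conditions that are pure (co)limit or mono/epi statements. For $(1)\Leftrightarrow(5)$: by Theorem \ref{ML}, $M$ is flat Mittag-Leffler precisely when $\mathcal M=\ilim{i}\mathcal N_i^\vee$ with $\mathcal N_i^\vee\subseteq\mathcal M$ submodule schemes, equivalently $\mathcal M_r=\ilim{i}\mathcal N_i^{\vee r}$; the strictness clause is that each dual morphism $\mathcal M^\vee\to\mathcal N_i$ be an epimorphism, which by Lemma \ref{L6.8} is the same as $\mathcal M^{\vee r}\to\mathcal N_{i\,r}$ being an epimorphism, and this is exactly the shape of the strict Mittag-Leffler criterion of \cite{mittagleffler}. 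For $(1)\Leftrightarrow(6)$: applying $(-)^r$ and Lemma \ref{L6.5}, a monomorphism $\mathcal M\hookrightarrow\prod_I\mathcal R$ is the same datum as a monomorphism $\mathcal M_r\hookrightarrow\prod_I\mathcal R_r$, since $(-)^r$ commutes with the product and $\mathcal R_r(N)=N$; this again matches the corresponding embedding criterion in \cite{mittagleffler}. For $(1)\Leftrightarrow(4)$, applying $(-)^r$ (fully faithful on $ro$-stable modules by Corollary \ref{L5.111}, hence reflecting isomorphisms) the natural morphism $\mathcal M\to\ilim{}{\mathcal M_i'}^\vee$ becomes $\mathcal M_r\to\ilim{}{\mathcal M_i'}^{\vee r}$, whose bijectivity is the content of the corresponding result in \cite{mittagleffler}.

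The substantive part is the pair of factorization statements $(2)$ and $(3)$. The implication $(2)\Rightarrow(3)$ is immediate (take $\mathcal N=\mathcal R$). For the rest I would first observe that, by Corollary \ref{L5.111}, a morphism $f\colon\mathcal M^\vee\to\mathcal N$ is the same datum as a morphism $f^r\colon\mathcal M^{\vee r}\to\mathcal N_r$ of functors of abelian groups, and that $f_R$ and $(f^r)_R$ determine the same submodule $\Ima f_R\subseteq N$; moreover, because $\mathcal N^{ro}=\mathcal N$ on the quasi-coherent modules involved (Proposition \ref{super}) and the correspondence $f\leftrightarrow f^r$ is natural in the target, $f$ factors through the quasi-coherent module attached to $\Ima f_R$ if and only if $f^r$ factors through $(\Ima f_R)_r$. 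This reduces $(2)$ and $(3)$ to the factorization criteria for strict Mittag-Leffler modules of \cite{mittagleffler}.

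I expect the genuine obstacle to be the implication $(3)\Rightarrow(1)$: one must show that imposing the factorization only against the structural object $\mathcal R$, equivalently against the morphisms $\mathcal M^\vee\to\mathcal R$ parametrized by $\Hom_{\mathcal R}(\mathcal M^\vee,\mathcal R)=M$ (Theorem \ref{prop4} and Hypothesis \ref{HP2}), already forces the factorization for every right $R$-module $N$. Here the reflexivity $\mathcal M=\mathcal M^{\vee\vee}$ and Yoneda (Lemma \ref{Yoneda}) are essential: they let one express an arbitrary datum $\mathcal M^\vee\to\mathcal N$ through its $R$-valued components and transfer the factorization detected at the level of $\mathcal R$ to the general target. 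Verifying that the quasi-coherent hull of $\Ima f_R$ is preserved under this passage, rather than merely the abstract $\Hom$-group identity, is the point that will require the most care.
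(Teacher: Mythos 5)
Your proposal follows essentially the same route as the paper: transport each condition through the dictionary lemmas (Corollary \ref{L5.111}/Lemma \ref{L6.5A}(4) for the Hom-groups in conditions (2)--(4), Lemma \ref{L6.5} for the monomorphism in (6), Lemma \ref{L6.8} for the epimorphisms in (5)) and then invoke the corresponding characterizations of flat strict Mittag-Leffler modules from \cite{mittagleffler}. The one place where you anticipate a genuine obstacle --- that $(3)\Rightarrow(1)$ might require a new argument via reflexivity and Yoneda --- is not an obstacle in the paper's proof: the companion result \cite[4.10]{mittagleffler} already establishes, at the level of functors of abelian groups, that factorization against $\mathcal R$ alone characterizes flat strict Mittag-Leffler modules, so the paper obtains $1\iff 3$ purely by transport through Lemma \ref{L6.5A}(4), exactly as you do for the other conditions.
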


\begin{proof} $ 1. \iff 2.$ It is a consequence of 
\cite[4.9\,(2)]{mittagleffler} and Lemma \ref{L6.5A}\,4. 

$ 1. \iff 3.$ It is a consequence of 
\cite[4.10]{mittagleffler} and Lemma \ref{L6.5A}\,4. 

$ 1. \iff 4.$ It is a consequence of 
\cite[4.9\,(3)]{mittagleffler}. 

$ 1. \iff 5.$ It is a consequence of 
\cite[4.9\,(4)]{mittagleffler} and Lemma \ref{L6.8}. 

$ 1. \iff 6.$ It is a consequence of 
\cite[4.7\,(3)]{mittagleffler} and Lemma \ref{L6.5}. 

\end{proof}

\begin{proposition} \label{CR7} An $R$-module $M$ is a projective $R$-module of countable type if and only if there exists a chain of submodule schemes of $\mathcal M$
$$\mathcal N_1^\vee\subseteq \mathcal N_2^\vee\subseteq \cdots\subseteq \mathcal N_n^\vee\subseteq \cdots$$
such that $\mathcal M=\cup_{n\in\mathbb N} \mathcal N_n^\vee$. \end{proposition}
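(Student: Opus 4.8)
The plan is to follow the template already used above for the finitely generated projective and flat cases: transport the statement across the two sides of the adjunction, the functor $\mathcal M$ on $R$-algebras on one side and the functor $\mathcal M_r=\mathcal M^r$ on right $R$-modules on the other. The bridge is the pair of facts $\mathcal M_r^{\,o}=\mathcal M^{ro}=\mathcal M$ (Hypothesis \ref{HP2} together with the remark opening this section) and $\mathcal N^{\vee ro}=\mathcal N^\vee$ (Proposition \ref{1.30}). With these, the proposition reduces to the corresponding characterization of projective modules of countable type in \cite{mittagleffler}, which, in the spirit of \cite[3.1]{mittagleffler} and \cite[3.2]{mittagleffler}, asserts that $M$ is projective of countable type if and only if $\mathcal M_r$ is the union of a chain $\mathcal N_1^{\vee r}\subseteq\mathcal N_2^{\vee r}\subseteq\cdots$ of functors of groups.

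For the direct implication, suppose $M$ is projective of countable type. By the cited result there is such a chain with $\mathcal M_r=\cup_{n\in\mathbb N}\mathcal N_n^{\vee r}$. Applying $(-)^o$, which is merely a restriction of functors and hence commutes pointwise with direct limits, I would obtain
$$\mathcal M=\mathcal M_r^{\,o}=\left(\cup_{n\in\mathbb N}\mathcal N_n^{\vee r}\right)^o=\cup_{n\in\mathbb N}\mathcal N_n^{\vee ro}=\cup_{n\in\mathbb N}\mathcal N_n^\vee.$$
Each inclusion $\mathcal N_n^{\vee r}\subseteq\mathcal N_{n+1}^{\vee r}$ is a monomorphism of functors of groups; under the identification $\Hom_{\mathcal R}(\mathcal N_n^\vee,\mathcal N_{n+1}^\vee)=\Hom_{grp}(\mathcal N_n^{\vee r},\mathcal N_{n+1}^{\vee r})$ of Lemma \ref{L6.5A}(1) it is the image of an $\mathcal R$-module morphism $\mathcal N_n^\vee\to\mathcal N_{n+1}^\vee$, which is a monomorphism by Lemma \ref{L6.5}. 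Hence $\mathcal N_1^\vee\subseteq\mathcal N_2^\vee\subseteq\cdots$ is a genuine chain of submodule schemes of $\mathcal M$ with the required union.

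For the converse, I would start from a chain of submodule schemes $\mathcal N_1^\vee\subseteq\mathcal N_2^\vee\subseteq\cdots$ of $\mathcal M$ with $\mathcal M=\cup_{n\in\mathbb N}\mathcal N_n^\vee$. Applying $(-)^r$ and invoking Hypothesis \ref{HP2} gives
$$\mathcal M_r=\mathcal M^r=\left(\cup_{n\in\mathbb N}\mathcal N_n^\vee\right)^r=\cup_{n\in\mathbb N}\mathcal N_n^{\vee r},$$
each inclusion $\mathcal N_n^\vee\subseteq\mathcal N_{n+1}^\vee$ becoming a monomorphism $\mathcal N_n^{\vee r}\subseteq\mathcal N_{n+1}^{\vee r}$ again by Lemma \ref{L6.5}. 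The cited characterization in \cite{mittagleffler} then yields that $M$ is projective of countable type.

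The step that requires genuine care, and which I expect to be the main obstacle, is the commutation $(\cup_n\mathcal N_n^\vee)^r=\cup_n\mathcal N_n^{\vee r}$ used in the converse (the analogous commutation of $(-)^o$ with the union in the direct sense is harmless, being pointwise). Since $\mathbb F^r(Q)$ is defined as the kernel of a morphism built from the values $\mathbb F(R\langle Q\rangle)$ and $\mathbb F(R\langle Q\oplus xR\rangle)$, and a directed union of $\mathcal R$-modules is computed pointwise, this reduces to the fact that formation of kernels commutes with filtered colimits in $\mathbb Z\text{-Mod}$; exactness of filtered colimits then delivers the claimed equality, while the monomorphism bookkeeping of Lemma \ref{L6.5} guarantees that the union on each side is a directed union of subobjects and not merely an abstract colimit.
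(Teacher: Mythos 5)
Your proposal is correct and follows essentially the same route as the paper: the paper's proof is the single line that the proposition ``is a consequence of \cite[4.11,13]{mittagleffler} and Lemma \ref{L6.5}'', which is precisely the transport argument you carry out, citing the countable-type projective characterization from the companion paper and using Lemma \ref{L6.5} to convert the chain of submodule schemes into a chain of subfunctors of groups and back. The details you make explicit---the identities $\mathcal M_r^{\,o}=\mathcal M$ and $\mathcal N^{\vee ro}=\mathcal N^\vee$, the use of Lemma \ref{L6.5A}(1) to lift the inclusions, and the commutation $(\cup_n\mathcal N_n^\vee)^r=\cup_n\mathcal N_n^{\vee r}$ via exactness of filtered colimits in $\mathbb Z$-Mod---are exactly what the paper leaves implicit (compare the parenthetical justification given in the proof of Theorem \ref{ML}).
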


\begin{proof} It is a consequence of 
\cite[4.11,13]{mittagleffler} and Lemma \ref{L6.5}. 
\end{proof}

 \begin{theorem} An $R$-module $M$ is projective if and only if there exists a chain of $\R$-sub\-mo\-du\-les of $\mathcal M$
 $$\mathbb W_1\subseteq \mathbb W_2\subseteq \cdots\subseteq \mathbb W_n\subseteq \cdots$$
such that $\mathcal M=\cup_{n\in\mathbb N} \mathbb W_n$, where $\mathbb W_n$ is a direct sum of module schemes  and the natural morphism $\mathcal M^\vee\to \mathbb W_{n}^\vee$ is an epimorphism, for any $n\in \mathbb N$.
 \end{theorem}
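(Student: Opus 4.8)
The plan is to transport, across the adjoint correspondence between $(-)^r$ and $(-)^o$, the analogous characterization of projective modules for the functor $\mathcal M_r$ proved in \cite{mittagleffler}, exactly in the spirit of the proofs of Theorem \ref{ML}, Theorem \ref{blabla} and Proposition \ref{CR7}. Before doing so, I would record two formal compatibilities. First, $(-)^r$ commutes with arbitrary direct sums: as $(-)^r$ is by definition a kernel of a morphism of functors valued in $\mathbb Z$-Mod, where direct sums are exact, one has $(\bigoplus_j \mathbb F_j)^r=\bigoplus_j \mathbb F_j^r$, and in particular $(\bigoplus_j \mathcal N_j^\vee)^r=\bigoplus_j \mathcal N_j^{\vee r}$. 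Applying $(-)^o$ and using $\mathcal N^{\vee ro}=\mathcal N^\vee$, a direct sum of module schemes $\bigoplus_j\mathcal N_j^\vee$ is precisely the image under $(-)^o$ of the direct sum of functors $\bigoplus_j\mathcal N_j^{\vee r}$. Second, the dual of such a direct sum is a product: since $\Hom_{\mathcal R}(\bigoplus_j\mathcal N_j^\vee,-)=\prod_j\Hom_{\mathcal R}(\mathcal N_j^\vee,-)$, reflexivity (Theorem \ref{reflex2}) gives $(\bigoplus_j\mathcal N_j^\vee)^\vee=\prod_j\mathcal N_j^{\vee\vee}=\prod_j\mathcal N_j$.

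For the forward implication I would assume $M$ projective and invoke the matching statement of \cite{mittagleffler}, which provides a chain of subfunctors $\mathbb V_1\subseteq\mathbb V_2\subseteq\cdots$ of $\mathcal M_r=\mathcal M^r$ with $\mathcal M_r=\cup_{n}\mathbb V_n$, each $\mathbb V_n$ a direct sum of functors $\mathcal N_{nj}^{\vee r}$ and the relevant morphism an epimorphism. Setting $\mathbb W_n:=\mathbb V_n^o$ and using the first compatibility above yields $\mathbb W_n=\bigoplus_j\mathcal N_{nj}^\vee$, a direct sum of module schemes; evaluating on $R$-algebras turns $\mathbb V_n\subseteq\mathcal M_r$ and $\mathcal M_r=\cup_n\mathbb V_n$ into a chain of $\mathcal R$-submodules $\mathbb W_n\subseteq\mathcal M=\mathcal M_r^{\,o}$ with $\mathcal M=\cup_n\mathbb W_n$. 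The converse is symmetric: from a chain of $\mathbb W_n$ as in the statement I would apply $(-)^r$, using that $(-)^r$ preserves the inclusions (Lemma \ref{L6.5}), the unions and the direct sums, to produce a chain inside $\mathcal M^r=\mathcal M_r$ of the form required by \cite{mittagleffler}, whence $M$ is projective.

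The one step that is not purely formal is the epimorphism condition on the duals. Here I would use the second compatibility to identify the target, $\mathbb W_n^\vee=(\bigoplus_j\mathcal N_{nj}^\vee)^\vee=\prod_j\mathcal N_{nj}$, so that $\mathcal M^\vee\to\mathbb W_n^\vee$ is a morphism $\mathcal M^\vee\to\prod_j\mathcal N_{nj}$ of exactly the shape handled by Lemma \ref{L6.8}. That lemma then equates its being an epimorphism with the corresponding morphism $\mathcal M^{\vee r}\to\prod_j\mathcal N_{nj,r}$ of functors of groups being an epimorphism, which is the epimorphism hypothesis appearing on the $(-)^r$ side in \cite{mittagleffler}. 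I expect this matching of the epimorphism condition, namely checking that the dual of an infinite direct sum of module schemes is the product to which Lemma \ref{L6.8} applies and that the resulting hypothesis coincides with the one in \cite{mittagleffler}, to be the main point requiring care; the passage through $(-)^r$ and $(-)^o$ for the chain, the inclusions and the unions is then routine.
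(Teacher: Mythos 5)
Your proposal is correct and follows essentially the same route as the paper, whose proof is precisely a one-line appeal to the projectivity characterization of $\mathcal M_r$ in \cite[4.14]{mittagleffler} together with Lemma \ref{L6.8} for matching the epimorphism condition on duals. The compatibilities you make explicit --- that $(-)^r$ commutes with direct sums, that $(\bigoplus_j\mathcal N_j^\vee)^\vee=\prod_j\mathcal N_j$ by reflexivity, and that the transport of chains and unions through $(-)^r$ and $(-)^o$ is routine --- are exactly the details the paper leaves implicit, and you correctly identify Lemma \ref{L6.8} as the non-formal step.
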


\begin{proof} It is a consequence of 
\cite[4.14]{mittagleffler} and Lemma \ref{L6.8}. 
\end{proof}

\end{document}